\documentclass{amsart}
\usepackage[english]{babel}
\usepackage{blindtext}
\usepackage[utf8]{inputenc}
\usepackage{fancyhdr}
\usepackage{hyperref}

\usepackage{amsmath,graphicx}
\usepackage{adjustbox}
\usepackage{amssymb}
\usepackage{tikz, tikz-cd}
\usepackage{fullpage}
\usepackage[all]{xy}
\usepackage[margin=1in]{geometry}
\usepackage{amsthm}
\usepackage{amsfonts}
\usepackage{bbm}
\usepackage{comment}
\usepackage{amsmath}
\usepackage{amsthm}
\usepackage{bbm}
\usepackage{array,multirow} 
\usepackage{color}
\usepackage[utf8]{inputenc}
\usepackage[english]{babel}
\usepackage{float}

\usepackage{mathtools}
\DeclarePairedDelimiter{\ceil}{\lceil}{\rceil}
\DeclarePairedDelimiter{\floor}{\lfloor}{\rfloor}

\usepackage{graphicx}
\usepackage{subfig}
\captionsetup[subfigure]{labelfont=rm}
\usepackage{pinlabel}
\usepackage{thmtools}
\usepackage{thm-restate}

\usetikzlibrary{arrows.meta,bending,decorations.markings,intersections} 
\tikzset{
    arc arrow/.style args={%
    to pos #1 with length #2}{
    decoration={
        markings,
         mark=at position 0 with {\pgfextra{%
         \pgfmathsetmacro{\tmpArrowTime}{#2/(\pgfdecoratedpathlength)}
         \xdef\tmpArrowTime{\tmpArrowTime}}},
        mark=at position {#1-\tmpArrowTime} with {\coordinate(@1);},
        mark=at position {#1-2*\tmpArrowTime/3} with {\coordinate(@2);},
        mark=at position {#1-\tmpArrowTime/3} with {\coordinate(@3);},
        mark=at position {#1} with {\coordinate(@4);
        \draw
        (@1) .. controls (@2) and (@3) .. (@4);},
        },
     postaction=decorate,
     }
}
\tikzset{
    new arc arrow/.style args={%
    to pos #1 with length #2}{
    decoration={
        markings,
         mark=at position 0 with {\pgfextra{%
         \pgfmathsetmacro{\tmpArrowTime}{#2/(\pgfdecoratedpathlength)}
         \xdef\tmpArrowTime{\tmpArrowTime}}},
        mark=at position {#1-\tmpArrowTime} with {\coordinate(@1);},
        mark=at position {#1-2*\tmpArrowTime/3} with {\coordinate(@2);},
        mark=at position {#1-\tmpArrowTime/3} with {\coordinate(@3);},
        mark=at position {#1} with {\coordinate(@4);
        \draw
        [-{Stealth[length=#2,bend]}]      
        (@1) .. controls (@2) and (@3) .. (@4);},
        },
     postaction=decorate,
     }
}

\usepackage[textsize=scriptsize]{todonotes}

\newcolumntype{L}{>{$}l<{$}}

\usepackage[ruled,vlined]{algorithm2e}

\SetCommentSty{mycommfont}

\usepackage{enumitem}
\usepackage{bm}

\setlength{\textwidth}{\paperwidth}
\addtolength{\textwidth}{-2.45in}
\calclayout

\newif\ifrevision
\newcommand{\change}[1]{%
\ifrevision
\textcolor{blue}{#1}%
\else
#1%
\fi
}

\newtheorem{theorem}{Theorem}[section]
\newtheorem{lemma}[theorem]{Lemma}

\newtheorem{corollary}[theorem]{Corollary}
\newtheorem{conjecture}[theorem]{Conjecture}
\newtheorem{proposition}[theorem]{Proposition}
\theoremstyle{definition}  
\newtheorem{definition} [theorem] {Definition}

\newtheorem{remark} [theorem] {Remark}
\newtheorem{question} [theorem] {Question}

\theoremstyle{definition}

\def\conn {\mathbin{\#}}
\def\cC{\mathcal{C}}
\def\CZhat{\widehat{\cC}_\Z}
\def\CZ{{\cC}_\Z}

\newcommand{\F}{{\mathbb{F}}}

\newcommand{\R}{{\mathbb{R}}}
\newcommand{\Z}{{\mathbb{Z}}}

\newcommand{\surj}{\twoheadrightarrow}

\newcommand{\length}{\text{length}}

\newcommand{\spin}{\text{spin}}
\newcommand{\Sym}{\text{Sym}}

\DeclareMathOperator{\HFK}{HFK}

\DeclareMathOperator{\HF}{HF}

\DeclareMathOperator{\CFK}{CFK}

\DeclareMathOperator{\rank}{rank}

\usepackage[utf8]{inputenc}

\def\HFh {\widehat{\operatorname{HF}}}
\def\CFK {\operatorname{CFK}}
\def\HFK {\operatorname{HFK}}

\def\CFKi {\operatorname{CFK}^\infty}

\def\HFKa {\widehat{\operatorname{HFK}}}

\DeclareMathOperator{\Spin}{Spin}
\DeclareMathOperator{\gr}{gr}

\def\spincs {\mathfrak{s}}
\newcommand{\hH}{{\mathcal{H}}}
\def\wa{{\widetilde \alpha}}
\def\wb{{\widetilde \beta}}

\def \pos{\operatorname{\textit{position}}}
\def \dir{\operatorname{\textit{direction}}}
\def \sid{\operatorname{\textit{side}}}
\def \hei{\operatorname{\textit{height}}}
\def \iseq{\operatorname{\textit{initial\_sequence}}}
\def \fseq{\operatorname{\textit{full\_sequence}}}
\def \maxhei{\operatorname{\textit{max\_height}}}
\def \minhei{\operatorname{\textit{min\_height}}}
\def \dh{\Delta_{\operatorname{\textit{height}}}}
\def \po{\operatorname{\textit{positive}}}
\def \ne{\operatorname{\textit{negative}}}
\def \inters{\operatorname{\textit{intersect}}}
\def \ite{\operatorname{\textit{iteration}}}

\title{(1,1) almost L-space knots}
\author{Fraser Binns}
\address{Department of Mathematics, Princeton University, Princeton, NJ, USA}
\email{fb1673@princeton.edu}
\author{Hugo Zhou}
\address{Mathematics, University of Michigan, Ann Arbor, MI, USA}
\email{hugozhou@umich.edu}

\begin{document}
\revisionfalse     

\begin{abstract}
   We give a diagrammatic characterization of the $(1,1)$ knots in the three-sphere and lens spaces which admit large Dehn surgeries to manifolds with Heegaard Floer homology of next-to-minimal rank. This is inspired by a corresponding result for $(1,1)$ knots which admit large Dehn surgeries to manifolds with Heegaard Floer homology of minimal rank due to Greene-Lewallen-Vafaee.
\end{abstract}
\maketitle
\section{Introduction}

Heegaard Floer homology is a powerful package of invariants due originally to Ozsv\'ath-Szab\'o~\cite{OSclosed}. A central question in the study of Heegaard Floer homology is whether or not any objects defined in terms of Heegaard Floer homology can be given Floer-free interpretations. We seek to address a special case of this question in this paper.

The simplest version of Heegaard Floer homology is a three manifold invariant, denoted $\widehat{\HF}(-)$. If $Y$ be a rational homology sphere then $\widehat{\HF}(-)$ satisfies the following inequality:

\begin{align}\label{rankinequality}
    \rank(\widehat{\HF}(Y))\geq |H_1(Y;\Z)|.
\end{align}

For example see \cite[Lemma 1.6]{lecture}. Here $|H_1(Y;\Z)|$ is the cardinality of $H_1(Y;\Z)${.}  An \emph{L-space} is a rational homology sphere for which inequality~\eqref{rankinequality} is tight. Examples include lens spaces. Conjecturally, L-spaces can be characterized as rational homology spheres either which do not admit taut foliations, or whose fundamental groups are not left orderable \cite[Conjecture 5]{juhaz},~\cite[Conjecture 1]{BoyerGordonWatsonLspacelofg}. {\emph{L-space knots} are the knots that admit Dehn surgeries to L-spaces}. L-space knots have played a key role in work on the Berge conjecture \cite{OSlspace} but are also of wider interest. An \emph{almost L-space} is a rational homology sphere $Y$ for which inequality~\eqref{rankinequality} is ``almost tight" -- that is $\rank(\widehat{\HF}(Y))=|H_1(Y;\Z)|+2$. 
Examples include the Brieskorn sphere $\Sigma(2, 3, 11)$. The term ``almost L-space" was coined by Baldwin-Sivek in their work on the characterizing slopes of the knot $5_2$~\cite{baldwin2022characterizing}. Almost L-spaces have have also been studied in other contexts; for example Lin gives results concerning the Stein fillings of a subclass of almost $L$-spaces~\cite{lin2020indefinite}.\

Almost L-space knots are to almost L-spaces as L-space knots are to L-spaces:

\begin{definition}
    An \emph{almost L-space knot} is a knot $K$ { that admits a surgery to an almost $L$-space and is not an $L$-space knot.}
\end{definition}
 {Note that unlike in~\cite{binns2023cfk} we do not require that the surgery is positive.} Examples of almost L-space knots include the mirror of $5_2$, $T(2,3)\#T(2,3)$, and $(2,4g(K)-3)$-cables of L-space knots. The first author proved that the $\CFKi$ type of an almost L-space knot is either a staircase with a length one box or a so called ``almost staircase'' (See the discussion below Definition \ref{def:almoststaircase})~\cite{binns2023cfk}. The $(2,4g(K)-3)$ cables of L-space knots are the only infinite family of almost L-space knots that have appeared in the literature, to the best knowledge of the authors. We give a {new} infinite family of almost L-space knots in Corollary \ref{cor: kj}.   

In this paper, we study rational homology spheres admitting genus one Heegaard splittings, namely $S^3$ and lens spaces. We consider L-space and almost L-space knots in these manifolds. Note that this definition is broader than the usual definition of (almost) L-space knots, which are required to be knots in $S^3$. See the discussion below Definition \ref{def: staircase}.

A knot $K\subset Y$ is called \emph{a $(1,1)$ knot} if there is a genus one Heegaard splitting of $Y$ such that $K$ intersects each handlebody in a trivial arc. In other words, $(1,1)$ knots are  precisely those that admit a doubly-pointed genus one Heegaard diagram. Such a diagram is called a \emph{$(1,1)$ diagram}. 
The fact that $(1,1)$ knots admit simple Heegaard diagrams makes their knot Floer homology comparatively easy to compute and as a result there are a number of results concerning the knot Floer homology of $(1,1)$ knots in the literature \cite{Ras_homology, goda2005knot, geometry11}. In particular, the knot Floer homology of $(1,1)$ knots is computable~\cite{doyle2005calculating}. Despite this, there is no general formula for the knot Floer homology of $(1,1)$ knots.

\begin{figure}
\labellist
 \pinlabel {$q$}  at 50 35
 \pinlabel {\Large $\cdots$}  at 46 22
  \pinlabel {$r$}  at 115 55
   \pinlabel {$p - 2q - r$}  at 155 100
   \pinlabel {\Large $\cdots$}  at 88 160
 \pinlabel {\Large $\cdots$}  at 100 75
  \pinlabel {\Large $\cdots$}  at 160 75
 \pinlabel {\small $w$}  at 60 15
 \pinlabel {\small $z$}  at 110 160

\endlabellist
\includegraphics[scale=0.85]{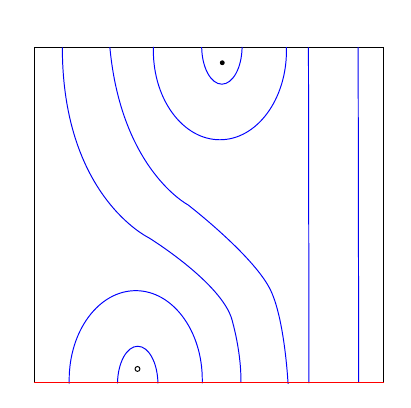}
\caption{The standard $(1,1)$ diagram for the four-tuple $(p,q,r,s)$ by \cite{Ras_homology}, where $q,r\geq 0,$ $2q + r \leq p$ and $0\leq s <p$. There are $q$ strands of ``rainbow arcs'' on both sides. If we label the intersection points on the top and bottom sides from left to right, then the $i$-th point on the top is identified with the $(i+s)$-th point on the bottom.  }
\label{fig: standard}
\end{figure}

Figure \ref{fig: standard} shows a \emph{standard (1,1) diagram} by \cite{Ras_homology}, which can be encoded by a $4$-tuple of integers $(p,q,r,s)$. See the beginning of Section \ref{subsec: 11} for details. Every $(1,1)$ diagram can be isotoped to a unique standard diagram, so we will generally make statements regarding the associated standard diagram. We will be particularly interested in the orientations of the ``rainbow arcs" which are those that bound bigons in Figure~\ref{fig: standard}. If the rainbow arcs around a fixed basepoint are not oriented in the same direction, call the orientation in the minority the \emph{inconsistent direction} and these rainbow arcs \emph{inconsistent arcs.}
(If there are the same number of arcs in each direction, choose {the left} direction as the inconsistent direction.) 
Having no inconsistent arc means that all the rainbow arcs around a fixed basepoint are in the same direction.
\begin{restatable}{definition}{coherent}(\cite[Definition 1.1]{GLV}.)
    A standard diagram is \emph{coherent} if  there are no inconsistent arcs.
\end{restatable}
This diagrammatic condition characterizes L-space knots amongst $(1,1)$ knots:
\begin{restatable}{theorem}{lspace}\label{thm: lspace}(\cite[Theorem 1.2]{GLV}.)
    A standard diagram represents an L-space knot if and only if it is coherent.
\end{restatable}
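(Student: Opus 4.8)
The plan is to compute the knot Floer chain complex directly from the standard $(1,1)$ diagram $D = D(p,q,s,r)$ and to determine exactly when, after cancellation, it reduces to a \emph{staircase complex}. Recall that by Ozsv\'ath--Szab\'o's lens space surgery work a knot $K$ is an L-space knot if and only if $\CFKi(K)$ is filtered chain homotopy equivalent to a staircase (equivalently, $K$ has a large L-space surgery, which can also be detected by checking that each large-surgery subquotient complex $\widehat{A}_i$ has homology of rank one). So the theorem will follow once we understand, combinatorially, when the complex of $D$ simplifies to a staircase. For the setup I would fix a basis for $\widehat{CFK}(D)$ given by the $p$ intersection points $x_1,\dots,x_p$ of the $\alpha$ and $\beta$ curves, observe that the only immersed bigons in the genus one surface are those cut out by the $q$ ``rainbow arcs'' on each side of each basepoint (together with the regions obtained by reading a nested family of arcs simultaneously), and record the Alexander and Maslov gradings of each $x_i$ in terms of the winding numbers of $\beta$ around $w$ and $z$. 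Thus the differential is completely controlled by: which pairs of generators cobound a rainbow arc, the orientation (nesting direction) of each arc, and whether the disk it bounds contains $w$ or $z$.

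For the direction ``coherent $\Rightarrow$ L-space knot'', assume $D$ has no inconsistent arc. Around each basepoint the rainbow arcs are then totally nested and coherently oriented, so I would cancel generators in pairs starting from the innermost arc and working outward. Because the arcs all point the same way, each cancellation does not create a new differential obstructing the next one (a telescoping/zig-zag argument on the rainbow-arc bigons). I would then check that after exhausting both basepoints the surviving complex has exactly one vertical and one horizontal arrow emanating from each surviving generator, with step lengths read off from $q$, $r$, $p-2q-r$ and the shift $s$, i.e.\ it is a staircase; hence $K$ is an L-space knot.

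For ``L-space knot $\Rightarrow$ coherent'' I would argue the contrapositive. If $D$ has an inconsistent rainbow arc around some basepoint, that basepoint carries rainbow arcs of both orientations, and after performing all legitimate cancellations the two oppositely-nested families leave behind a generator supporting a ``box'' (a square summand with two horizontal and two vertical arrows) or, more generally, a piece of the reduced complex that is not a staircase. In particular $\HFKa(K)$ then has rank at least two in some Alexander grading. Since L-space knots have $\HFKa(K,j)$ of rank at most one for every $j$, $K$ is not an L-space knot.

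The main obstacle will be the bookkeeping in the cancellation argument. In the coherent case one must verify that the cancellations really terminate in a genuine staircase with no leftover ``long'' arrows: this requires controlling how the two basepoints and the identification shift $s$ interact, and ruling out differentials that jump over an already-cancelled generator. In the non-coherent case one must check that the surviving box (or non-staircase piece) genuinely persists in homology rather than being killed by a differential arising from the interplay of the two basepoints. The degenerate configurations ($q=0$, an equal number of arcs pointing each way around a basepoint, or $r = 0$ or $p - 2q - r = 0$) should be dispatched separately but are straightforward.
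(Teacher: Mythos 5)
There are genuine gaps in both directions, and they stem from the same source: you work entirely downstairs in the torus and never pass to the universal cover, which is where the differentials of a $(1,1)$ diagram actually live. For a genus one doubly pointed diagram, $\langle \partial x, y\rangle \neq 0$ exactly when some lifts of $x$ and $y$ cobound an embedded bigon between $\wa$ and a lift of $\beta$ in $\R^2$; such bigons typically wrap around the torus (already for torus knots the staircase has steps of length $\geq 2$, coming from bigons covering a basepoint several times), so your claim that the only relevant bigons are the rainbow-arc bigons (plus nested unions) is false, and the ``step lengths read off from $q,r,p-2q-r,s$'' bookkeeping is not grounded in an actual enumeration of differentials. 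Moreover, in a standard (reduced) diagram every bigon contains a basepoint, so none of the rainbow-arc arrows preserves both filtrations: there are no ``legitimate cancellations'' to perform, and cancelling arrows that cross a basepoint does not preserve the filtered homotopy type. The coherent $\Rightarrow$ L-space direction is instead proved by showing that coherence forces each lift $\wb_s$ to be a monotone (``graphic'') chain in the universal cover, and that the complex of a graphic chain is already a staircase (using that the vertical and horizontal homologies are one-dimensional); no cancellation scheme is involved.

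The converse direction is where the real content lies, and your proposal essentially asserts it: ``the two oppositely-nested families leave behind a box or a non-staircase piece, so $\HFKa$ has rank at least two in some Alexander grading'' is precisely what must be proved, and neither step is justified. An inconsistent arc gives you a non-staircase-looking presentation in the diagram basis, but you must rule out that a filtered change of basis turns $\CFKi$ into a staircase; this is exactly the role of the turning-point argument (an inconsistent arc produces, in the universal cover, a generator with both an incoming and an outgoing arrow, and one must prove that a staircase complex admits no basis with such a generator). Likewise, a collision of two generators in the same Alexander grading within a single $\spin^c$ structure does not follow from the mere presence of an inconsistent arc without an argument about the Alexander gradings of the diagram's generators — and since the theorem covers knots in lens spaces as well as $S^3$, any such grading argument must be run per $\spin^c$ structure, using the Rasmussen--Rasmussen characterization of L-space knots there rather than the $S^3$-specific facts you invoke. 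As written, the hard direction is a restatement of the theorem, not a proof of it.
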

We seek to extend this result, via an extension of the techniques Greene-Vafaee-Lewallen use to obtain it, to the case of almost L-space $(1,1)$ knots. It would have been natural to conjecture that the new diagrammatic condition required for this would be that there are exactly two inconsistent arcs in the associated standard diagram (one around each basepoint). However, this statement fails even for relatively simple $(1,1)$ diagrams, see Figure \ref{fig: K9204}. It turns out the appropriate definition is the following.

\begin{restatable}{definition}{strongly} \label{def: strongly_almost_coherent}
    A standard diagram is \emph{strongly almost coherent} if {there are at least four rainbow arcs, only two of which --- say $a_1$ and $a_2$ --- have a given orientation and either\begin{enumerate}[label=(\alph*)]
        \item $a_1$ and $a_2$ share a common endpoint;
        \item or $a_1$ and $a_2$ are connected by two other rainbow arcs.
    \end{enumerate}}
\end{restatable}
This condition turns out to be equivalent to having exactly two inconsistent arcs in the universal cover (a condition we call \emph{virtually almost coherent}, see Definition \ref{def: vir_almost}.) We show that this characterizes almost L-space knots amongst $(1,1)$ knots:
\begin{theorem} \label{thm: main}
    A standard diagram represents an almost L-space knot if and only if it is strongly almost coherent.
\end{theorem}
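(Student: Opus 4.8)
The plan is to mirror the structure of Greene–Lewallen–Vafaee's proof of Theorem~\ref{thm: lspace}, upgrading each step from the "L-space / minimal rank" setting to the "almost L-space / next-to-minimal rank" setting. Recall that for a $(1,1)$ knot $K$ in $Y \in \{S^3\} \cup \{\text{lens spaces}\}$ with standard diagram encoded by $(p,q,s,r)$, large surgery $S^3_n(K)$ (or the analogous large surgery in a lens space) has $\widehat{\HF}$ of rank controlled by $\rank \HFKa(K)$ together with the Alexander/Maslov gradings; minimal rank forces $\HFKa$ to be "thin in a staircase pattern" and the diagram to have a particularly rigid combinatorial structure, which GLV identify as coherence. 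In the same way, next-to-minimal rank of large surgery is equivalent to $K$ being an almost L-space knot, which by the first author's work \cite{binns2023cfk} means $\CFKi(K)$ is either a staircase-with-a-box or an almost staircase. So the first step is to set up the precise dictionary: \emph{strongly almost coherent diagram} $\iff$ $\CFKi(K)$ has almost-L-space type $\iff$ $K$ is an almost L-space knot (in the broad sense allowed here). The forward direction — strongly almost coherent implies almost L-space — I would prove by directly analyzing the $(1,1)$ Heegaard diagram: the two inconsistent arcs, under either of the two listed configurations, contribute exactly one extra "box" or "almost-staircase defect" to the differential on $\CFKi$, and everything else behaves coherently (hence staircase-like) by the GLV analysis applied to the complement of the inconsistent arcs.

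For the harder converse — almost L-space implies strongly almost coherent — the plan is to argue contrapositively: if the standard diagram is \emph{not} strongly almost coherent, then either (a) there are zero inconsistent arcs, in which case by Theorem~\ref{thm: lspace} it is an L-space knot, not an almost L-space knot; or (b) there is exactly one inconsistent arc around some basepoint but the configuration fails both bullet conditions; or (c) there are two or more inconsistent arcs in total with a "bad" configuration. In cases (b) and (c) I would show the rank of $\HFKa(K)$, or more precisely the structure of $\CFKi(K)$, is forced to be \emph{strictly more complicated} than an almost staircase — e.g. it contains two boxes, or a box together with a genuine staircase-breaking arrow, or a length-$\geq 2$ box — which by \cite{binns2023cfk}'s classification of almost-L-space $\CFKi$-types rules out $K$ being an almost L-space knot. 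Concretely, this means passing to the universal cover of the $(1,1)$ diagram (lifting the rainbow arcs to $\R^2$), counting inconsistent arcs there, and showing that "not strongly almost coherent" is equivalent to "not virtually almost coherent" (Definition~\ref{def: vir_almost}), i.e. $\geq 3$ inconsistent arcs in the universal cover; each such inconsistent arc in the cover forces an independent contribution to the homology of the surgered manifold beyond the almost-L-space budget.

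The main technical engine throughout is the explicit combinatorial model for the knot Floer differential of a $(1,1)$ knot coming from its standard diagram — bigons in the genus-one diagram correspond to the arrows in $\CFKi$, and the rainbow-arc orientations record which bigons appear. The key step I expect to be the genuine obstacle is the equivalence \emph{strongly almost coherent} $\iff$ \emph{virtually almost coherent}, together with the claim that exactly two inconsistent arcs in the universal cover translates into exactly the almost-staircase-or-box form of $\CFKi$. Going to the universal cover linearizes the "rainbow nesting" combinatorics and makes the counting of extra differentials clean, but one must carefully check that the two finite-diagram configurations in Definition~\ref{def: strongly_almost_coherent} (shared endpoint; connected by two other rainbow arcs) are \emph{exactly} the two ways a single pair of downstairs inconsistent arcs can lift to a single pair upstairs — the example in Figure~\ref{fig: K9204} is precisely the cautionary case showing the naive "two inconsistent arcs downstairs" condition is insufficient. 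Once that equivalence is in hand, the rank computation for large surgeries (bounding $\rank \widehat{\HF}(S^3_n(K))$ above and below by counting generators and differentials in the lifted diagram) proceeds by the same bookkeeping GLV use, with the single extra inconsistent arc-pair accounting for precisely the "$+2$" in the almost-L-space rank inequality.

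Finally, I would verify the broad-sense subtleties: since we allow knots in lens spaces, I must confirm \cite{binns2023cfk}'s $\CFKi$-classification (stated for $S^3$) and the large-surgery formula both extend, or else argue the relevant structural conclusions using only the $(1,1)$ diagram without invoking ambient $S^3$; and I would separately dispatch the footnoted "tie" case (equal numbers of arcs in each direction, so the inconsistent direction is a choice) by checking the conclusion is independent of that choice. With the dictionary, the universal-cover reformulation, and the rank count assembled, Theorem~\ref{thm: main} follows.
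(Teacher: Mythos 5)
Your overall architecture matches the paper's: introduce the universal-cover notion (virtually almost coherent), prove it equivalent to the strong diagrammatic condition, get sufficiency by explicit analysis of the two inconsistent-arc configurations producing a box-plus-staircase complex, and get necessity from the classification of almost L-space $\CFK^\infty$-types in \cite{binns2023cfk}. However, as written there is a genuine gap in the necessity direction. Your plan there rests on the claim that ``each inconsistent arc in the cover forces an independent contribution to the homology of the surgered manifold beyond the almost-L-space budget,'' i.e.\ a direct rank count. That claim is exactly what needs proof and is not obviously true: the classification of almost L-space complexes is up to filtered chain homotopy equivalence, while the $(1,1)$ diagram hands you one particular basis, and you must rule out that a diagram with three or more inconsistent lifts nevertheless presents (after change of basis) a staircase-plus-box or almost staircase. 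The paper bridges this by working with basis-independent data: it shows the diagram's canonical basis is distributed over $3$ or $4$ consecutive Maslov gradings with multiplicities $(1,*,*)$, $(*,*,1)$, $(1,2,*,*)$ or $(*,*,2,1)$ (Lemma~\ref{le:threearrows}), proves the extremal-grading generator has both a predecessor and a successor along $\wb_s$ (Lemma~\ref{le:presec}), and then truncates between the nearest turning points (Definition~\ref{def:trunc}) to conclude the rest of $\wb_s$ is graphic, so at most one inconsistent arc can occur in each half plane. Some replacement for this Maslov-grading/turning-point bookkeeping is needed; a bare rank heuristic does not rule out cancellation or rebasing phenomena.

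Two further points you flag but do not actually supply. First, the equivalence \emph{strongly} $\Leftrightarrow$ \emph{virtually} almost coherent is asserted as the ``key obstacle'' but not argued; the paper proves it via a height bound on inconsistent arcs (Lemma~\ref{le: onebasepoint}, using translates of bigons) and then a short case analysis of the possible relative positions of the two inconsistent lifts (Proposition~\ref{pro: equiv_def}), which is where the second bullet of Definition~\ref{def: strongly_almost_coherent} (arcs joined by two other rainbow arcs) is forced. Second, in the sufficiency direction the paper also needs the corner case ruled out by Lemma~\ref{le: cornercase} (the lift carrying the inconsistent arcs has at least two arcs in each half plane), and it verifies by explicit computation of the differentials that the inconsistent configuration always splits off a length-one box, after which GLV's argument applies to the truncated graphic chain; note that the paper's conclusion is stronger than your ``box or almost-staircase defect'' alternative, since the almost-staircase type never arises for $(1,1)$ knots (Theorem~\ref{thm:(1,1)virtualboxcoherent}). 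Your remarks about extending \cite{binns2023cfk} and the large surgery formula to lens spaces, and about the tie-breaking footnote, are appropriate and are handled the same way in the paper.
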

In fact we prove a stronger statement; that the almost L-space knots that arise as $(1,1)$ knots have the $\CFK^\infty$-type of box plus staircase complexes, see Theorem~\ref{thm:(1,1)virtualboxcoherent}. This implies that not every bigraded vector space that arises as $\widehat{\HFK}(K)$ for some $K$ arises as $\widehat{\HFK}(K')$ for some $(1,1)$ knot $K'$; for example the $(2,1)$-cable of $T(2,3)$ which is an almost $L$-space knot but does not have the $\CFK^\infty$-type of a box plus staircase complex. This contrasts with the case of the decategorification of knot Floer homology -- the Alexander polynomial, $\Delta_K(t)$. The Alexander polynomial has the property that any polynomial which arises as $\Delta_K(t)$ for some knot $K$ arises as $\Delta_{K'}(t)$ for some $(1,1)$ knot $K'$~\cite{fujii1996geometric}.


One can readily check by computer algorithm whether or not a given $(1,1)$ diagram is strongly almost coherent; see Section \ref{sec: addendum} for the pseudo code. In Table \ref{ta:list}, we have recorded all the four-tuples with $p\leq 15$ in $S^3$ that satisfies the strongly almost coherent condition. For each pair of the mirror four-tuples $(p,q,r,s)$ and $(p,q,p-2q-r,2q-s)$ exactly one is recorded. 
\begin{table}
   \centering
   \begin{tabular}{|c|c|}\hline
  Four-tuple $(p,q,{r},{s})$  & Knot name \\\hline
  $(5,2,0,1),(5,2,0,4)$ & $4_1$ \\\hline
   $(7,2,0,3),(7,2,0,4),(7,3,0,1)$ & \multirow{2}{*}{$5_2$} \\
   $(7,3,0,2),(7,3,0,5),(7,3,0,6)$ & \\\hline
   $(11,3,1,4)$ & $10_{139}$\\\hline
   $(13,4,1,7)$ & $12n_{725}$\\\hline
   $(15,3,1,4),(15,4,2,5)$ & $16n_{792631}$\\\hline
\end{tabular}\caption{All the $(1,1)$ almost L-space knots  in $S^3$ with $\rank(\HFKa(K))\leq 15$, up to mirroring. }\label{ta:list}
 \end{table}

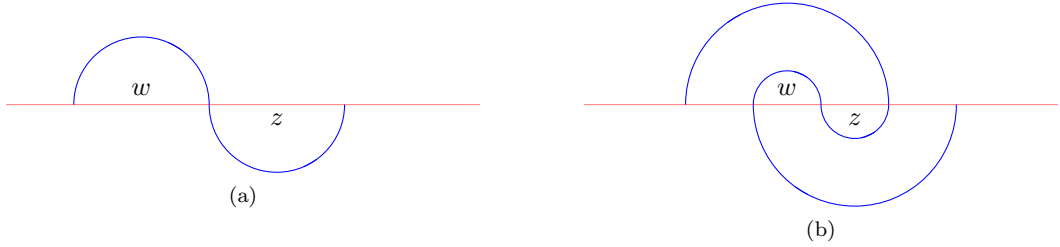
\begin{figure}
\begin{minipage}{.5\linewidth}
     \centering
      \subfloat[]{
       \begin{tikzpicture}[scale=0.9]
       \begin{scope}[thin, red!50!white]
	  \draw  (-1, 0) -- (6,0);
      \end{scope}    
       \draw[arc arrow=to pos 0.7 with length 2mm, blue] (0,0) arc (180:0:1);     
           \draw[arc arrow=to pos 0.7 with length 2mm, blue] (2,0) arc (-180:0:1);   
           
              \node  [above] at (1,0) {$w$};
          \node  [below] at (3,0) {$z$};
    \end{tikzpicture}
     }
\end{minipage}%
\begin{minipage}{.5\linewidth}
     \centering
      \subfloat[]{
       \begin{tikzpicture}[scale=0.9]
       \begin{scope}[thin, red!50!white]
	  \draw  (-1.5, 0) -- (5.5,0);
      \end{scope}
       \draw[arc arrow=to pos 0.7 with length 2mm, blue] (1,0) arc (180:0:0.5);
         \draw[arc arrow=to pos 0.7 with length 2mm, blue] (2,0) arc (-180:0:0.5);
           \draw[arc arrow=to pos 0.7 with length 2mm, blue] (3,0) arc (0:180:1.5);
           \draw[arc arrow=to pos 0.7 with length 2mm, blue] (4,0) arc (0:-180:1.5);
 
              \node  [above] at (1.5,0) {$w$};
          \node  [below] at (2.5,0) {$z$};
         
    \end{tikzpicture}
     }
\end{minipage}%
\label{}
\caption{Two possible cases in Definition \ref{def: strongly_almost_coherent}. The inconsistent arcs are the outermost arcs in the diagram on the right.}
\end{figure}\label{fig: for_def_strongly}
We also give a new infinite family of almost L-space knots. Let $K_j$ be the $(1,1)$ knot in $S^3$ given by the four-tuple $(7+4j,3,4j,2)$ with $j\in \Z_{\geq 0}.$ {Note that these knots are distinct since $\rank(\widehat{\HFK}(K_j))=7+4j$.}

\begin{corollary}\label{cor: kj}
The knot $K_j$ is an almost L-space knot for $j\in \Z_{\geq 0}.$
\end{corollary}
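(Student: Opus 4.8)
The plan is to deduce this directly from Theorem~\ref{thm: main}. By definition $K_j$ is presented by the standard $(1,1)$ diagram attached to the four-tuple $(7+4j,3,4j,2)$, so it suffices to verify that this diagram is strongly almost coherent in the sense of Definition~\ref{def: strongly_almost_coherent} (and to confirm from the parameters that the underlying manifold is $S^3$, a routine homological check, so that $K_j$ really is a knot in the three-sphere). Since $q=3$, the standard diagram carries exactly three rainbow arcs around each of the two basepoints $w$ and $z$. Thus the task is to show that around each basepoint precisely one of the three rainbow arcs is oriented in the minority direction, and that the two resulting inconsistent arcs satisfy one of the two alternatives of Definition~\ref{def: strongly_almost_coherent}.

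First I would write down the diagram for $(7+4j,3,4j,2)$ explicitly: label the $7+4j$ intersection points along the top and bottom from left to right, impose the identification governed by the shift parameter, and insert the three rainbow arcs on each side together with the remaining non-rainbow strands. Tracing the knot through the diagram determines the orientation of each of the six rainbow arcs, and I would check that for every $j\ge 0$ the pattern stabilizes so that exactly one rainbow arc around $w$ and one around $z$ are inconsistent, and that these two arcs are connected by two further rainbow arcs --- i.e. the second alternative of Definition~\ref{def: strongly_almost_coherent} holds. Theorem~\ref{thm: main} then immediately gives that $K_j$ is an almost L-space knot; as a sanity check, having two inconsistent arcs the diagram is not coherent, so by Theorem~\ref{thm: lspace} $K_j$ is genuinely not an L-space knot.

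The hard part will be handling the whole family uniformly in $j$: the shift and the block of non-rainbow strands both grow linearly in $j$, so one must argue that the cyclic order of the arc endpoints and the induced orientations do not change qualitatively as $j$ increases. I expect the cleanest route is to pass to the universal cover and use the equivalence of \emph{strongly almost coherent} with \emph{virtually almost coherent} (Definition~\ref{def: vir_almost}): in the cover the diagram is periodic, the count of inconsistent arcs per period is visibly independent of $j$, and the statement collapses to a single finite check. As a bonus this identifies the $\CFK^\infty$-type of $K_j$ as a box-plus-staircase complex via Theorem~\ref{thm:(1,1)virtualboxcoherent}, consistent with the $j=0$ case, which is $5_2$ up to mirror.
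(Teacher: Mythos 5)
Your proposal is correct and follows essentially the same route as the paper: the paper's proof consists precisely of observing that the standard diagram for $(7+4j,3,4j,2)$ is strongly almost coherent (verified via the periodic picture in the universal cover, Figure~\ref{fig: kj}) and then invoking Theorem~\ref{thm: main}. Your suggested uniform-in-$j$ check via the universal cover and the equivalence with virtual almost coherence is exactly how the paper's figure organizes the verification, so there is nothing substantively different to flag.
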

 {To the best of the authors knowledge, this is the first infinite family to occur in the literature with the $\CFKi$-type of the direct sum of a staircase and a box as opposed to an almost staircase, see Section~\ref{sec:Bakground} for relevant definitions.}
\begin{proof}
   The diagram given by the four-tuple $(7+4j,3,4j,2)$ (See Figure \ref{fig: kj} \subref{subfig: kj_a}) is strongly almost coherent according to Definition \ref{def: strongly_almost_coherent}. Therefore $K_j$ is an almost L-space knot by Theorem \ref{thm: main}.
\end{proof}
 Let $\CZhat$ denote the group of pairs $(Y,K)$, such that $K$ is a knot in an integer homology sphere $Y$ which bounds integer homology ball. The group action is given by connected sum. Let $\CZ$ denote the subgroup of $\CZhat$ consisting of pairs $(S^3,K).$  
 \begin{figure}
\labellist
 \pinlabel {\Large $-j$}  at 117 44
\endlabellist
\includegraphics[scale=0.85]{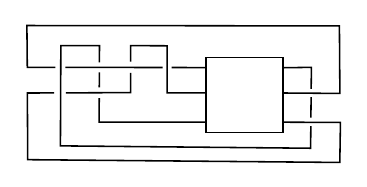}
\caption{The knot diagram of $K_j$ for $j\geq 0$. Here $-j$ indicates $j$ left-handed full twists.  }
\label{fig: kjknot}
\end{figure}

\begin{restatable}{proposition}{propkj} \label{prop: kj}
    The family $\{K_j\}_{j\geq 0}$ satisfies the following properties.
\begin{enumerate}
    \item \label{it:kj1} The family $\{(S^3_{+1}(K_j)\conn -S^3_{+1}(K_j),\mu \conn U)\}_{j>0}$ generates a $\Z^\infty$ summand in $\CZhat/\CZ$, where $\mu$ is the image of a meridian of $K_j$ and $U$ is the unknot;
    \item \label{it:kj2} $K_j$ is not homology concordant to any L-space knot. 
\end{enumerate}
\end{restatable}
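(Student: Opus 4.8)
The plan is to extract the $\CFKi$-type of $K_j$ from Theorem~\ref{thm:(1,1)virtualboxcoherent}, feed it through a surgery formula for the dual knot, and separate the resulting classes in $\CZhat/\CZ$ with a family of homology-concordance homomorphisms sensitive to the ``box'' summand of an almost $L$-space knot. First note that $\mu\subset S^3_{+1}(K_j)$ is isotopic to the core of the surgery solid torus: pushed off $\partial N(K_j)$, the curve $\mu_{K_j}$ meets the new meridian $\mu_{K_j}+\lambda_{K_j}$ once and is thus a longitude of the surgery torus. Write $Y_j:=S^3_{+1}(K_j)$, an integer homology sphere; as $Y_j\conn -Y_j=\partial\big((Y_j\setminus B^3)\times I\big)$ bounds an integer homology ball, $(Y_j\conn -Y_j,\mu\conn U)$ is a genuine class in $\CZhat$. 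For part~\ref{it:kj1} it then suffices to build homology-concordance homomorphisms $\Phi_k\colon\CZhat\to\Z$ ($k\ge1$) that vanish on $\CZ$ and for which the matrix $\big(\Phi_k(Y_j\conn -Y_j,\mu\conn U)\big)_{k\geq1,\ j>0}$ is lower triangular with nonzero diagonal; a standard argument then promotes the resulting linear independence to a $\Z^\infty$ direct summand of $\CZhat/\CZ$.

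By Theorem~\ref{thm:(1,1)virtualboxcoherent}, $\CFKi(K_j)$ has the type of a staircase with a box attached, and from the four-tuple $(7+4j,3,4j,2)$ one reads off both the staircase and the Alexander grading at which the box sits; the essential feature is that this data grows linearly in $j$, so the complexes spread out along the family. Next, apply a dual-knot mapping-cone surgery formula (in the spirit of Ozsv\'ath--Szab\'o and Hedden--Levine) to compute the knot-like complex $\mathcal{CFK}(Y_j,\mu)$ together with its conjugation involution, and tensor with $\mathcal{CFK}(-Y_j,U)=\CF(-Y_j)$ to obtain the involutive knot-like complex of $(Y_j\conn -Y_j,\mu\conn U)$. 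Since $Y_j\conn -Y_j$ bounds a homology ball its correction terms vanish, so all of the content lies in the ``knot part''.

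Because $L$-space knots realise every staircase local-equivalence class, no invariant factoring through $\nu^+$-equivalence ($\tau$, $V_0$, $\Upsilon$, the Dai--Hom--Stoffregen--Truong $\varphi_k$, and so on) can detect the box, so we take the $\Phi_k$ from involutive knot Floer homology --- for instance suitable combinations of $\bar d$ and $\underline d$ of large surgeries on $\mu\conn U$ inside $Y_j\conn -Y_j$, equivalently numerical shadows of the $\iota_K$-local-equivalence class of the complex above. The box of $K_j$ is attached to the staircase in such a way that the conjugation symmetry cannot be made to split it off, which forces non-standard values of these involutive invariants; one then checks that the $k$-th invariant ``switches on'' precisely when the box of $K_j$ has reached level $k$, yielding the triangularity, while the standard control of involutive invariants of knots in $S^3$ gives vanishing on $\CZ$; this proves part~\ref{it:kj1}. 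Part~\ref{it:kj2} runs along the same lines without any surgery: were $K_j$ homology concordant to an $L$-space knot $L$, then $(\CFKi(K_j),\iota_{K_j})$ and $(\CFKi(L),\iota_L)$ would be $\iota_K$-locally equivalent, but $\CFKi(L)$ is a plain staircase carrying the standard involution and so has a standard $\iota_K$-local-equivalence class, contradicting the non-standard class forced by the box. (Alternatively, part~\ref{it:kj2} follows from part~\ref{it:kj1}: surgering along the concordance annulus would identify $(Y_j\conn -Y_j,\mu\conn U)$ with the analogous pair for $L$, which lies in $\CZ$ because $\CFKi(L)$ contains no box.)

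The main obstacle is the middle two steps: computing $\mathcal{CFK}(Y_j,\mu)$ together with its involution --- in particular pinning down how conjugation acts on the box-plus-staircase complex and tracking this through the mapping cone --- and then verifying the triangularity of the $\Phi_k$ uniformly in $j$. Upgrading ``linearly independent subgroup'' to ``direct summand'' further needs the vanishing of the $\Phi_k$ on $\CZ$ to hold exactly, which is the most delicate bookkeeping point.
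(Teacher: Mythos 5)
Your overall architecture diverges from what is actually needed, and the places where it diverges are exactly where the gaps are. For part \eqref{it:kj1}, the paper does \emph{not} need involutive Floer homology at all: after computing $\CFKi(K_j)$ explicitly from the $(1,1)$ diagram and changing basis, the local equivalence class is the staircase $D_j$ (the box is quotiented away), whose step data grows with $j$, and one then applies the Dai--Hom--Stoffregen--Truong homomorphisms $\varphi_{j+1,j}\colon \CZhat/\CZ\to\Z$ (together with the computation of the dual-knot complex of $(S^3_{+1}(K_j),\mu)$ from the staircase) to get the $\Z^\infty$ summand. Your premise that ``no invariant factoring through $\nu^+$-equivalence, including the DHST $\varphi$'s, can detect the box, so involutive invariants are required'' conflates the concordance homomorphisms for knots in $S^3$ with the homology-concordance invariants of knot-like complexes for knots in nontrivial homology spheres: the relevant object is the pair $(S^3_{+1}(K_j),\mu)$, and the $\varphi_{i,j}$ of its knot-like complex vanish on $\CZ$ yet are nonzero here for reasons having nothing to do with the box (this is precisely how DHST's own L-space-knot examples work). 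Your replacement $\Phi_k$ --- involutive homomorphisms on $\CZhat$ vanishing on $\CZ$ with a triangularity coming from ``the box reaching level $k$'' --- is not constructed anywhere and is implausible as stated, since the box sits at a bounded Alexander/Maslov position for all $j$; what grows with $j$ is a staircase step length. This makes your proof of \eqref{it:kj1} unsupported. Your parenthetical alternative for \eqref{it:kj2} is also wrong: $(S^3_{+1}(L)\conn -S^3_{+1}(L),\mu\conn U)$ for an L-space knot $L$ does \emph{not} lie in $\CZ$ --- pairs of exactly this form are DHST's generators of the $\Z^\infty$ summand.

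For part \eqref{it:kj2} your main strategy (compare $\iota_K$-local equivalence classes with a staircase carrying the canonical involution) is the paper's strategy, but the decisive step is only asserted. A box summand by itself does \emph{not} force a nonstandard $\iota_K$-local class: if $\iota$ split as (canonical involution on the staircase) $\oplus$ (anything on the box), the class would be standard. The content of the paper's argument is the explicit computation that $\iota$ mixes the box with the staircase --- $\iota(y_0)=y_0+U^{-1}x_0$ and $\iota(x_2)=x_2+y_0$ --- followed by a concrete contradiction: assuming an $\iota_K$-local map $f\colon(\CFKi(K_j),\iota)\to(D_j,\iota')$ with homotopy $H$, one computes $H\partial(x_2)+\partial H(x_2)=y_0$ and then $0=H\partial(x_1+x_3)+\partial H(x_1+x_3)=y_1+y_{-1}$, which is absurd. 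Without determining the involution on your box-plus-staircase model and running some such argument, ``the non-standard class forced by the box'' is an unjustified claim, so \eqref{it:kj2} is not proved as written.
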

Previous examples of families of knots that generate $\Z^\infty$ summands of the quotient group $\CZhat/\CZ$ use L-space knots in the place of $K_j$, see \cite[Corollary 1.2, Lemma 11.3]{dai2021homology}.  

This paper is organised as follows: in Section~\ref{sec:Bakground} we review relevant aspects of Heegaard Floer homology and $(1,1)$ knots while in Section~\ref{sec:mainthm} we prove Theorem~\ref{thm: main} and Proposition~\ref{prop: kj}. In Section \ref{sec: addendum} we give an algorithm for checking if a $(1,1)$ diagram is strongly almost coherent.

\subsection*{Acknowledgments}
The first author would like to thank Tao Li and Joshua Greene for some helpful conversations.
Some of this work was carried out  while the authors attended the ``Floer Homotopical Methods in Low Dimensional and Symplectic Topology" workshop at the Simons
Laufer Mathematical Sciences Institute (formerly MSRI) supported by NSF Grant DMS-1928930\ when the second author was in residence there during Fall 2022. Further work was carried out while the authors attended the ``2022 Tech topology conference". We would like to thank the organisers of these two conferences. The second author was partially supported by NSF grant DMS-2104144 and a Simons
Fellowship. Finally, the authors would like to express their gratitude to an anonymous referee for their thorough reading and exceptionally detailed report, which has led to significant improvements in both the mathematical rigor and the exposition of the paper. 

\section{Background and Review}\label{sec:Bakground}

In this section we review aspects of Heegaard Floer homology and $(1,1)$ knots that will be relevant in Section~\ref{sec:mainthm}.

\subsection{Knot Floer homology and almost L-space knots}
Knot Floer homology is a knot invariant due independently to Ozsv\'{a}th-Szab\'{o} \cite{OSknot} and J. Rasmussen \cite{Ras_complement}. We briefly review the definition.

For a knot $K$, in a rational homology three sphere $Y$, one associates a doubly-pointed Heegaard diagram $\hH=\{ \Sigma, \bm{\alpha}, \bm{\beta},z,w \} $, where $\Sigma$ is a genus $g$ surface. The symmetric product of $\Sigma$, \ $\Sym^g(\Sigma)$, is a $g$-dimensional {complex} manifold obtained by taking the quotient of the $g$-fold Cartesian product by the natural action of the symmetric group $S_g$. $\Sym^g(\Sigma)$ contains two {totally real tori} given as the image of the following manifolds
\[
\mathbb{T}_\alpha = \alpha_1 \times \cdots \alpha_g \hspace{2em} \text{and} \hspace{2em} \mathbb{T}_\beta = \beta_1 \times \cdots \beta_g.
\] under the quotient. Ozsv\'{a}th-Szab\'{o} define a map 
\[
\spincs_w \colon \mathbb{T}_\alpha \cap \mathbb{T}_\beta \rightarrow \Spin^c(Y)
\]
in \cite[Section 2.6]{OSclosed}. 
Set $\mathfrak{T}(\hH, s) = \{ x \in \mathbb{T}_\alpha \cap \mathbb{T}_\beta \ | \  \spincs_w(x) = s \}.$ Suppose $[K]$ is of order $d$ in $H_1(Y),$ then there is a function 
\[
A_{w,z} \colon \mathfrak{T}(\hH, s) \rightarrow \frac{1}{2d}\Z
\]
called the \emph{Alexander grading.} See \cite[Section 2.2]{HL}, \cite[Section 2.4]{rational}. 
For each $s \in \Spin^c(Y)$, the knot Floer chain complex $\CFKi(Y,K,s)$ is generated by the tuples $[x,i,j]$ where $x\in \mathfrak{T}(\hH, s)$, $i\in \Z$ and $j-i = A_{w,z}(x).$ Note that for generators in the same spin$^c$ structure, all $j$ are in the same coset of $\Z.$   This complex is often viewed as a free module over the polynomial ring $\F[U,U^{-1}],$ where $\F = \Z/2\Z$ and the action of $U$ is given by $U\cdot [x,i,j] = [x, i-1, j-1]$. The differential is given by
\[
\partial x = \sum_{y\in \mathbb{T}_\alpha \cap \mathbb{T}_\beta} \sum_{\substack{\phi \in \pi_2(x,y)\\ \mu(\phi)=1}} \# \widehat{\mathcal{M}}(\phi)[y,i-n_w(\phi),j-n_z(\phi)],
\]
where $\pi_2(x,y)$ is the set of homotopy class of Whitney disks between $x$ and $y,$ $\widehat{\mathcal{M}}(\phi)$ is the reduced moduli space of the holomorphic representatives of $\phi$,   $\mu$ is the Maslov index, and $n_w$ (resp.~$n_z$) is the intersection number of $\phi$ with a co-dimension $2$ submanifold of $\Sym^g(\Sigma)$ associated to the $w$ (resp.~$z$) basepoint. Let 
\[\CFKi(Y,K) = \bigoplus_{s \in \Spin^c(Y)} \CFKi(Y,K,s).\]
There is a double-filtration on $\CFKi(Y,K)$ induced by $(i, j)$. 
It turns out that up to doubly-filtered chain homotopy equivalence, $\CFKi(Y,K)$ is a topological invariant of the pair $(Y,K).$ Additionally, each generator is assigned an integer value called the \emph{Maslov grading}, denoted by $\gr$. The differential lowers the Maslov grading by $1$ while the $U$ action lowers the it by $2.$ 

A \emph{positive (resp.~negative) L-space knot} is a knot $K$ in an L-space $Y$ such that sufficiently postive (resp.~negative) surgeries on $K$ yields an L-space.  It turns out that those knots have particularly simple knot Floer chain complex that resembles a staircase on the $(i,j)$ plane. 
\begin{definition} \label{def: staircase}
In $ \CFKi(Y,K,s),$ \emph{a positive staircase {of length $k$}}  is a set of generators $\{ x_\ell\}_{1\leq \ell \leq k} \cup \{ y_\ell\}_{1\leq \ell \leq k+1}$ where $k$ is a non-negative integer, such that $x_\ell$ and $y_\ell$ only differ in $i$ coordinate while $x_\ell$ and $y_{\ell+1}$ only differ in $j$ coordinate, and the {non-trivial components of the differential} are given by $\partial x_\ell = y_\ell + y_{\ell+1}    $ for $  1\leq \ell \leq k.  $ A \emph{negative staircase {of length $k$}} is the dual of a positive staircase {of length $k$}.
\end{definition}

Suppose $K\subset Y$ is a positive (resp.~negative) L-space knot. For each $s\in \Spin^c(Y),$  $\CFKi(Y,K,s)$ consists of a  positive (resp.~negative) staircase. 
For knots in $S^3$ this result follows from a result of Ozsv\'{a}th-Szab\'{o} \cite[Theorem 1.2]{OSlspace} while the general case is due to J. and S. Rasmussen \cite[Lemma 3.2]{JSRas}, building on work of Boileau, Boyer, Cebanu, and Walsh~\cite{boileau2012knot}. 

Next we turn to \emph{almost L-space knots}.  An \emph{almost L-space} is a rational homology sphere $Y$ such that $\HFh(Y,s)$ has rank $1$ in all but one spin$^c$ structure, in which it has rank $3$. We call the $\spin^c$ structure in which $\rank(\HFh(Y,s))=3$ the \emph{exceptional $\spin^c$ structure}. A knot $K$ in a rational homology sphere L-space is  a \emph{positive (resp.~negative) almost L-space knot} if sufficiently positive (resp.~negative) surgeries on $K$ yield almost L-spaces and $K$ is not a trefoil.

\begin{definition} \label{def:almoststaircase}
   {Let $k$ be a positive integer. A set of generators of $ \CFKi(Y,K,s),$} is called a \emph{positive almost staircase {of length $k$}}  if it is one of the following.
   \begin{enumerate} [label=(\alph*)]
       \item  {When $k$ is even,} generators are $\{ x_\ell\}_{1\leq \ell \leq {k}} \cup \{ y_\ell\}_{1\leq \ell \leq {k+1}} \cup \{y'_{{\frac{k}{2}+1}} \}  \cup \{z\},$ where $y_{{\frac{k}{2}+1}}, y'_{{\frac{k}{2}+1}}$ and $z$ are in coordinates $(0,1), (1,0)$ and $(0,0)$ respectively,  $x_\ell$ and $y_\ell$ only differ in $i$ coordinate, while  $x_\ell$ and $y_{\ell+1}$ only differ in $j$ coordinate except for $\ell= {\frac{k}{2}}$, where  $x_{{\frac{k}{2}}}$ and $y'_{{\frac{k}{2}+1}}$ only differ in $j$ coordinate. The {non-trivial components of the} differential {are} given by 
       \begin{align*}
           \partial x_\ell &= y_\ell + y_{\ell + 1}  \hspace{5em}    1\leq \ell \leq {k} \quad\text{and}\quad \ell \neq {\frac{k}{2}, \frac{k}{2}+1}  \\
           \partial x_{{\frac{k}{2}}} &= y_k + y_{{\frac{k}{2} + 1}} + y'_{{\frac{k}{2} + 1}}    \\
           \partial x_{{\frac{k}{2} + 1}} &= y_{{\frac{k}{2} + 1}} + y_{{\frac{k}{2} + 2}} + y'_{{\frac{k}{2} + 1}}    \\
           \partial y_{{\frac{k}{2} + 1}}  &= \partial y'_{{\frac{k}{2} + 1}} = z;
       \end{align*}
           \item    {When $k$ is odd,} generators are $\{ x_\ell\}_{1\leq \ell \leq {k} }  \cup \{x'_{{\frac{k+1}{2}}} \} \cup \{ y_\ell\}_{1\leq \ell \leq {k+1}}  \cup \{z\},$ where $x_{{\frac{k+1}{2}}}, x'_{{\frac{k+1}{2}}}$ and $z$ are at coordinate $(0,-1), (-1,0)$ and $(0,0)$ respectively,  $x_\ell$ and $y_\ell$ only differ in $i$ coordinate, while  $x_\ell$ and $y_{\ell+1}$ only differ in $j$ coordinate except for $\ell= {{\frac{k+1}{2}}}$,  $x'_{{\frac{k+1}{2}}}$ and $y_{{{\frac{k+1}{2}+1}}}$ only differ in $j$ coordinate. The {non-trivial components of the} differential {are} given by 
       \begin{align*}
           \partial x_\ell &= y_\ell + y_{\ell + 1}  \hspace{5em}    1\leq \ell \leq {k} \quad\text{and}\quad \ell \neq {\frac{k+1}{2}} \\
              \partial z &= x_{{\frac{k+1}{2}}} + x'_{{\frac{k+1}{2}}}\\
           \partial x_{{\frac{k+1}{2}}} &= \partial x'_{{\frac{k+1}{2}}} =  y_{{\frac{k+1}{2}}}  +  y_{{\frac{k+1}{2}+1}}.       
       \end{align*}
   \end{enumerate}
 {Roughly speaking, an almost staircase is obtained from a staircase of the same length by replacing the middle generator by three generators.}
 
   We note there are no known examples of knots with the $\CFK^\infty$-type of almost staircases of {odd length}.
A \emph{negative almost staircase} is the dual of a positive almost staircase.

A \emph{box with length 1} is a set of generators $\{a,b,c,d\}$ with coordinates $(0,0), (0,1), (1,0)$ and $(1,1)$ respectively and differential with non-trivial components given by 
\[
  \partial d = b + c \hspace{5em}    \partial b = \partial c = a.
\] 
\end{definition}

\begin{theorem}[~\cite{binns2023cfk}] \label{thm:almostlspace}
    Suppose $K$ is a positive almost L-space knot in a rational homology sphere. Then in the exceptional spin$^c$ structure $s$ $ \CFKi(Y,K,s)$ consists of either 
    \begin{itemize}
        \item a direct sum of a positive staircase and a box of length $1$
        \item or a positive almost staircase,
    \end{itemize}
    while in every other spin$^c$ structure the knot Floer chain complex consists of a positive staircase. 
\end{theorem}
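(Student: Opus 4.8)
The plan is to reduce the statement to the known classification of $\CFKi$-types of almost L-space knots in $S^3$ (Theorem~\ref{thm:almostlspace} as stated only for $S^3$ in the referenced work, and extended appropriately) by a surgery-formula argument that isolates the exceptional $\spin^c$ structure. First I would fix a positive almost L-space knot $K$ in a rational homology sphere L-space $Y$, so that for all sufficiently large $n$ the manifold $S^3_n(K)$ (more precisely $Y_n(K)$) is an almost L-space. I would then invoke the large surgery formula: for $n \gg 0$, $\HFh(Y_n(K), [i])$ is computed by the homology of the subquotient complex $\widehat{A}_i = C\{\max(i, j - i) = 0\}$ of $\CFKi(Y,K)$ in the relevant $\spin^c$ structure, with one such complex for each residue class. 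The almost L-space condition says exactly one of these $\widehat{A}_i$ has homology of rank $3$ and all the others have rank $1$.

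The key steps, in order: (1) Observe that $\rank H_*(\widehat{A}_i) = 1$ forces the $\spin^c$ structure in which it lives to have a staircase summand contributing the whole reduced $\widehat{A}_i$, by the same argument Ozsv\'ath–Szab\'o use for L-space knots — local-equivalence reduction plus the fact that a complex with $\rank H_*(\widehat{A}_i) = 1$ for the full tower of $i$-values is a staircase. This handles every non-exceptional $\spin^c$ structure and gives the last bullet of the statement. (2) In the exceptional $\spin^c$ structure $s$, analyze $\CFKi(Y,K,s)$ using the constraints: the $\widehat{A}_i$ for $|i|$ large still have rank $1$ (they see only the staircase part at the extremes of the Alexander grading), exactly one $\widehat{A}_{i_0}$ has rank $3$, and symmetry $\widehat{A}_i \simeq \widehat{A}_{-i}$ forces $i_0 = 0$. (3) Feed these into the structural analysis: the total complex $\CFKi(Y,K,s)$ must be, up to filtered homotopy equivalence, a staircase-like complex perturbed in a bounded region near $(0,0)$; a case analysis on how a single extra rank-$2$ jump in $H_*(\widehat{A}_0)$ can be realized — bounded by the $d$-invariant/correction-term shifts, which for an almost L-space differ from the L-space case only by the one exceptional $\spin^c$ structure — yields precisely the two models: staircase $\oplus$ box-of-length-$1$, or a positive almost staircase of type (a) or (b). (4) Take duals to get the negative case, and note the trefoil exclusion is exactly what removes the degenerate case where the "box" or "almost staircase" region collapses so that $K$ is actually an L-space knot with an almost L-space surgery.

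The main obstacle I expect is step (3): pinning down that the only perturbations of a staircase compatible with "all $\widehat{A}_i$ of rank $1$ except $\rank H_*(\widehat{A}_0) = 3$" are the listed ones. This requires a careful bookkeeping argument — essentially showing that the "defect" $\CFKi(Y,K,s)$ relative to a staircase is a single box or a single local $Y$-shaped modification, and ruling out more complicated acyclic summands or larger boxes by tracking the $(i,j)$-support forced by the genus bound $|n| \geq 2g(K)-1$ and by the rank constraints on the intermediate $\widehat{A}_i$. The L-space case of this rigidity is in Ozsv\'ath–Szab\'o; here one must redo it allowing exactly one controlled failure, and check that no step secretly requires the ambient manifold to be $S^3$ rather than a general rational homology sphere L-space — the Rasmussen–Rasmussen staircase result for L-space knots in general rational homology spheres is the tool that licenses this generality.
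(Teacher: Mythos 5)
Your overall framing matches how this result actually enters the paper: the statement is quoted from the first author's earlier work, where it is proved for knots in $S^3$, and the paper's only added content is the remark that the argument transfers to rational homology sphere L-spaces because Ozsv\'ath--Szab\'o's large surgery formula holds there and because L-space knots in rational homology spheres have staircase complexes by Rasmussen--Rasmussen. Your steps (1), (2) and (4) reproduce exactly that reduction: large surgeries identify $\HFh$ of the surgered manifold with the homology of the $\widehat{A}_i$ complexes, the non-exceptional spin$^c$ structures and the large-$|i|$ complexes have rank $1$ (using that $Y$ is an L-space), the exceptional spin$^c$ structure is self-conjugate so the rank-$3$ complex sits at $i_0=0$, and dualizing handles the negative case. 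None of that is in dispute.

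The genuine gap is your step (3), and it is not a small one: the assertion that a complex all of whose $\widehat{A}_i$ have rank $1$ except $\rank H_*(\widehat{A}_0)=3$ must be, up to filtered homotopy equivalence, either a staircase direct sum a length-one box or an almost staircase (of type (a) or (b) in Definition~\ref{def:almoststaircase}) \emph{is} the theorem being cited. This classification occupies the bulk of the referenced work; it requires choosing a reduced basis, analyzing vertical and horizontal homology, and systematically ruling out longer boxes, multiple acyclic summands, and other perturbations of a staircase, none of which follows from ``bounded by the $d$-invariant shifts'' or from the genus bound (which, for knots in rational homology spheres, is not even how the definition is phrased --- one uses all sufficiently large surgeries, and the support constraints come from the rank conditions on every $\widehat{A}_i$ together with conjugation symmetry). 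Your proposal correctly identifies where the difficulty lies but then defers it as ``careful bookkeeping,'' so as written it is an outline whose decisive step --- the rigidity argument producing exactly the two listed models and no others --- is missing. If your intent is to cite that classification rather than reprove it, the honest form of the argument is the one the paper gives: quote the $S^3$ result and verify that no step of its proof uses the ambient manifold being $S^3$ beyond the large surgery formula and the staircase classification for L-space knots, both of which are available for rational homology sphere L-spaces.
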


Note that only almost L-space knots in $S^3$ are considered in~\cite{binns2023cfk}, but given that Ozsv\'ath-Szab\'o's large surgery formula~\cite[Section 4]{OSknot} holds for rational homology spheres, the result above follows from the proof in the $S^3$ case, together with Ozsv\'ath-Szab\'o's classification of the $\CFK^\infty$-type of L-space knots~\cite{OSlspace}. Note also that the statement holds for the negative almost L-space knot as well, replacing the word ``positive" by ``negative". Finally, we note in passing that the exceptional $s$ is necessarily self-conjugate.
\begin{figure}
\begin{minipage}{.5\linewidth}
     \centering
      \subfloat[]{
       \begin{tikzpicture}[scale=0.9]
       \begin{scope}[thin, black!0!white]
	  \draw  (5, 0) -- (-3,0);
   \draw  (0, 3) -- (0,-1);
      \end{scope}
      \begin{scope}[thin, black!40!white]  
     \draw  (0, -1) -- (0, 3);
	  \draw  (-1, 0) -- (3, 0);
      \end{scope}
      
      	\filldraw (0, 0) circle (2pt) node[] {};
       \filldraw (1, 0) circle (2pt) node[] {};
       \filldraw (0, 1) circle (2pt) node[] {};
       \filldraw (1, 2) circle (2pt) node[] {};
       \filldraw (2, 1) circle (2pt) node[] {};
       \filldraw (0, 2) circle (2pt) node[] {};
       \filldraw (2, 0) circle (2pt) node[] {};

     \draw  (0,0) -- (0,1);
     \draw  (0,0) -- (1,0);
     \draw  (1,0) -- (1,2);
          \draw  (0,2) -- (1,2);
      \draw  (0,1) -- (2,1);
       \draw  (2,1) -- (2,0);

       \draw  (2,1) -- (1,0);
        \draw  (1,2) -- (0,1);
  
      	 	\node  [left] at (0,-0.3) {$z$};
         \node  [left] at (1.2,-0.3) {$y'_2$};
           \node  [left] at (2.4,-0.3) {$y_3$};

    \node  [left] at (0,1) {$y_2$};
     \node  [left] at (0,2.2) {$y_1$};
      \node  [above] at (1,2) {$x_1$};
       \node  [right] at (2,1) {$x_2$};
    \end{tikzpicture}
     }
\end{minipage}%
\begin{minipage}{.5\linewidth}
     \centering
      \subfloat[]{
       \begin{tikzpicture}[scale=0.9]
       \begin{scope}[thin, black!0!white]
	  \draw  (-5, 0) -- (2,0);
      \end{scope}
      \begin{scope}[thin, black!40!white]  
     \draw  (0, -4) -- (0, 1);
	  \draw  (-4, 0) -- (1, 0);
      \end{scope}
      
      	\filldraw (0, 0) circle (2pt) node[] {};
       \filldraw (-1, 0) circle (2pt) node[] {};
       \filldraw (0, -1) circle (2pt) node[] {};
       \filldraw (-1, -2) circle (2pt) node[] {};
       \filldraw (-2, -1) circle (2pt) node[] {};
       \filldraw (1, -2) circle (2pt) node[] {};
       \filldraw (-2, 1) circle (2pt) node[] {};
          \filldraw (1, -3) circle (2pt) node[] {};
       \filldraw (-3, 1) circle (2pt) node[] {};

     \draw  (0,0) -- (0,-1);
     \draw  (0,0) -- (-1,0);
     \draw  (-1,0) -- (-1,-2);
          \draw  (1,-2) -- (-1,-2);
      \draw  (0,-1) -- (-2,-1);
       \draw  (-2,-1) -- (-2,1);

       \draw  (-2,-1) -- (-1,0);
        \draw  (-1,-2) -- (0,-1);

          \draw  (1,-2) -- (1,-3);
                    \draw  (-2,1) -- (-3,1);
  
      	 	\node  [right] at (0,0.3) {$z$};
         \node  [right] at (-1.2,0.3) {$x'_2$};
           \node  at (-1.7,0.7) {$x_1$};

    \node  [right] at (0,-1) {$x_2$};
     \node  [right] at (0.4,-1.8) {$x_3$};
      \node  [below] at (-1,-2) {$y_3$};
       \node  [left] at (-2,-1) {$y_2$};
       \node  [above] at (-2.8,0.5) {$y_1$};
              \node  [right] at (0.4,-2.8) {$y_4$};
    \end{tikzpicture}
     }
\end{minipage}
    \caption{Two examples of positive almost staircases drawn in the $(i,j)$ plane. {The almost staircase in (a) is of length $2$ and the staircase in (b) is of length $3$}. Solid dots indicate generators and edges indicate non-trivial components of the differential (since the differential lowers the filtration level we omit the direction of the edges).}
    \label{fig: almost_staircases}
\end{figure}
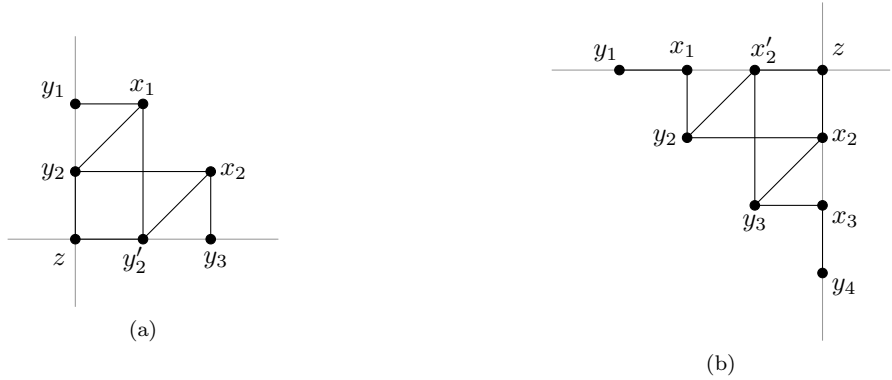

\subsection{(1,1) knots} \label{subsec: 11}

 A $(1,1)$ knot is a knot which can be encoded by a doubly pointed Heegaard diagram on $S^1\times S^1$. {In this paper we only consider $(1,1)$-knots in lens spaces --- i.e. not in $S^1\times S^2$ --- since Theorem~\ref{thm:almostlspace} only applies to knots in rational homology spheres.} For $(1,1)$ knots, the definition of the knot Floer chain complex simplifies drastically. Given $\hH=\{ \Sigma, \alpha, \beta,z,w \}$,  where the genus of $\Sigma$ is $1$, we have that $\Sym^g(\Sigma)=\Sigma, \mathbb{T}_\alpha=\alpha$ and $\mathbb{T}_\beta=\beta$. Consider the universal cover of $\Sigma$, $\R^2$. The orientation on $\Sigma$ induces an orientation on $\R^2$. Let $\tilde{\alpha},\tilde{\beta}$ denote particular lifts of $\alpha$ and $\beta$ respectively.  Orienting $\alpha$ and $\beta$ so that $\alpha \cdot \beta >0$; this induces orientations on $\tilde{\alpha}$ and $\tilde{\beta}$ so that $\tilde{\alpha} \cdot\tilde{\beta} >0$. For $x,y \in \mathfrak{T}(\hH) = \alpha \cap \beta$, we have $\langle \partial x,y\rangle\neq 0$ if and only if there is an embedded bigon from a lift of $x$ to a lift of $y$ cobounded by $\tilde{\alpha}$ and $\tilde{\beta}$ (subject to the standard orientation conditions). In sum, computing Heegaard Floer homology in the case of $(1,1)$ knots requires no analysis.

 Up to isotopy, we can ensure that each bigon in a $(1,1)$ diagram contains at least one basepoint by successively isotoping away bigons in the complement of the basepoints, that is, by simply ``pulling tight'' the curves. Then by fixing a choice of the parametrization of $S^1\times S^1$, each $(1,1)$ diagram admits a unique \emph{standard diagram}, encoded by a four-tuple $(p,q,r,s)$ following the convention of J. Rasmussen~\cite{Ras_homology}.
See Figure \ref{fig: standard}. The number $p$ is an invariant of the knot {--- it is the rank of the knot Floer homology of the knot --- while $q$ depends on the diagram}. A priori, $s$ may take value in $\Z$. However, note that the transformation $(p,q,r,s) \rightarrow (p,q,r,s+p)$ amounts to performing a Dehn twist along the red $\alpha$ curve, which changes the isotopy class of the diagram but preserves the manifold-knot pair. Therefore for each $(1,1)$ diagram we can choose the unique standard diagram representative such that $s$ is in the interval $[0,p).$ 
There is an involution given by reflection in the horizontal direction, which corresponds to taking the mirror of the manifold-knot pair, and corresponds to the transformation $(p,q,{r},{s}) \rightarrow (p,q,p-2q-r,2q-s).$  Each $(1,1)$ diagram is uniquely associated to a standard diagram and therefore to a four-tuple, but different standard diagrams can give rise to the same knot. We denote the $(1,1)$ knot associated to a four-tuple $(p,q,{r},{s})$ by $K(p,q,r,s).$

\begin{remark}
In the convention of \cite{GLV}, a $(1,1)$ diagram is called \emph{reduced} if each bigon contains at least one basepoint. Note that the difference between a reduced $(1,1)$ diagram and a standard diagram is merely whether one fixes a parametrization of $S^1\times S^1$ or not. Therefore one does not lose generality by making statements regarding standard diagrams.
\end{remark}

We will be interested in standard diagrams that satisfy various properties. First recall the following definition from the introduction:
\coherent*
L-space knots are diagrammatically characterized amongst $(1,1)$ knots as follows:
\lspace*
We now give a reinterpretation of Greene-Lewallen-Vafaee's proof of Theorem \ref{thm: lspace}. This will give an indication of how we shall proceed in the case of almost L-space knots.

Fix a lift $\wa$ of $\alpha$ in the universal cover $\R^2$. 
Given  $s \in \Spin^c(Y),$ choose a lift of some point $x\in \mathfrak{T}(\hH,s)$ to a point $\tilde{x}\in\wa.$ There is a unique lift $\wb_s$ of $\beta$ that passes through {$\tilde{x}$}.  {Up to rigid motion of the plane, $\wa \cup \wb_s$ does not depend on the choice of $\wa, x$ or $\tilde{x}$, but only on the spin$^c$ structure $s$.}
The points of $\wa \cap \wb_s$ are in one-to-one correspondence with $\mathfrak{T}(\hH,s)$. There are $2n+1$ such points for some non-negative integer $n$ since $\chi(\widehat{CFK}(\hH,s))=1$. 
Number these points $\wa \cap \wb_s$ from $1$ to $2n+1$ following the orientation of $\wa$. Next, starting from infinity and following the orientation of $\wb_s$, record the sequence  of the intersection points that  $\wb_s$ passes through.
\begin{figure}
       \begin{tikzpicture}[scale=1]
       \begin{scope}[thin, red!50!white]
	  \draw  (0, 0) -- (4,0);
      \end{scope}    
      \filldraw (1, 0) circle (2pt) node[] {};
      \filldraw (2, 0) circle (2pt) node[] {};
      \filldraw (3, 0) circle (2pt) node[] {};
       \draw[arc arrow=to pos 0.7 with length 2mm, blue] (2,0) arc (180:0:0.5);     
           \draw[arc arrow=to pos 0.7 with length 2mm, blue] (1,0) arc (-180:0:1);   
    \end{tikzpicture}
\caption{The right most intersection point is a turning point.}\label{fig: for_def_turning}
\end{figure}
\begin{definition}\label{def:turningpoint}
An intersection point $b\in \wa \cap \wb_s$ is called a \emph{turning point} if its number is either greater or less than the numbers of both its successor and predecessor in the above sequence. See Figure \ref{fig: for_def_turning} for an example.
\end{definition}
{We observe the following:
\begin{lemma}\label{lem:turningpointarrowsgradings}
   Viewed as a basis element for $\CFKi(Y,K,s)$, a turning point admits both an incoming and an outgoing arrow. In particular, the basis $\mathfrak{T}(\mathcal{H},s)$ contains generators in three consecutive Maslov gradings.
\end{lemma}
\begin{proof}
    The two bigons connecting the turning point, $t$, to its successor, $p$, and predecessor, $s$,  --- as shown in Figure~\ref{fig: for_def_turning} --- are of opposite orientations. Therefore one of them induces an incoming arrow and the other one induces an outgoing arrow. The condition on the Maslov gradings follows from the fact that one of $p$, $s$ has Maslov grading one larger than that of $t$, while the other has Maslov grading one lower than that of $t$.
\end{proof}}
 However, {there is no generator with an incoming and outgoing arrow} in the {reduced} knot Floer chain complex of an L-space knot for any basis. We note in passing that in the knot Floer chain complex of an L-space knot there is in fact a unique basis that can be obtained from a $(1,1)$ diagram.
The following fact is implicit in \cite{GLV}, {and follows directly from the previous observation and Lemma:}
\begin{lemma}[\cite{GLV}]\label{le:lspace_noturning}
 A standard diagram for an L-space knot has no turning point in the universal cover. 
\end{lemma}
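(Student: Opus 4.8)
The plan is to contrapose: assume the standard diagram has a turning point $b \in \wa \cap \wb_s$ for some $\spincs$ structure $s$, and show that $K$ cannot be an L-space knot. By the fact quoted just above the lemma statement, a turning point admits both an incoming and an outgoing differential arrow in $\CFKi(Y,K,s)$, so I first need to make precise why this forces the knot Floer chain complex to fail the L-space condition. The key algebraic input is Ozsv\'ath--Szab\'o's classification (and the Rasmussen--Rasmussen generalization) recalled in the excerpt: for a positive (resp.\ negative) L-space knot, $\CFKi(Y,K,s)$ is a positive (resp.\ negative) staircase in every $\spincs$ structure. So I would argue that in a staircase complex, every generator is either a source (only outgoing arrows), a sink (only incoming arrows), or has no arrows at all --- never both an incoming and an outgoing arrow. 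Hence a turning point is incompatible with the staircase structure.

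The slightly delicate point is that ``admits an incoming and outgoing arrow'' is a statement about the \emph{particular} basis coming from the $(1,1)$ diagram, whereas the staircase classification is a statement up to filtered chain homotopy equivalence, i.e.\ about \emph{some} basis. So the main obstacle --- and the step I'd spend the most care on --- is passing between bases. Here I would use the remark made in the excerpt that for an L-space knot there is in fact a \emph{unique} basis realizing the staircase shape that can be obtained from a $(1,1)$ diagram; equivalently, the $(1,1)$ diagram basis, after reduction (pulling curves tight so every bigon carries a basepoint, which is exactly the standard-diagram normalization), \emph{is} a reduced basis, and a reduced basis for a staircase complex is forced to be the standard staircase basis up to relabeling. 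Concretely: the number of intersection points is $2n+1$, matching a staircase with $n$ steps; the differential counts embedded bigons in the universal cover cobounded by $\wa$ and $\wb_s$; and one checks that in such a reduced diagram no generator can be the endpoint of two bigons in the ``wrong'' configuration unless the turning-point pattern appears in the intersection sequence. Thus the existence of a turning point is genuinely an obstruction in every admissible basis.

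Putting it together, the steps are: (1) recall that for an L-space knot $\CFKi(Y,K,s)$ is a (positive or negative) staircase for every $s$; (2) observe that in a staircase complex, with respect to the staircase basis, the generators $x_\ell$ are sources and the $y_\ell$ are sinks, so no generator carries both an incoming and an outgoing arrow; (3) invoke the uniqueness of the $(1,1)$-diagram-induced basis for L-space knots (the parenthetical remark in the text, attributable to \cite{GLV}) to conclude that the bigon-counting differential read off the standard diagram must already exhibit this source/sink dichotomy; (4) apply the quoted fact that a turning point has both an incoming and an outgoing arrow, yielding a contradiction. Hence a standard diagram for an L-space knot has no turning point in the universal cover. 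I expect step (3) to require the most attention, since it is where the interplay between the diagrammatic and algebraic descriptions of $\CFKi$ is pinned down; everything else is a short unwinding of the staircase definition (Definition~\ref{def: staircase}) together with Theorem~\ref{thm: lspace} and the preceding classification results.
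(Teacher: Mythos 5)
Your skeleton---a turning point carries both an incoming and an outgoing arrow, while the staircase classification of $\CFKi(Y,K,s)$ for L-space knots forbids such a generator---is exactly the argument the paper sketches, so the approaches agree in outline; the difference lies entirely in how you bridge from the abstract staircase to the particular basis of intersection points, and there your route is stronger than necessary and is also the least justified part of your write-up. The paper's bridge is the basis-independent observation that in a staircase \emph{no} homogeneous basis element, for \emph{any} choice of basis, can have both an incoming and an outgoing arrow: the $x$-generators all sit in one Maslov grading $m$ and the $y$-generators in $m-1$, so (taking $U$-translates into account) the gradings of parity of $m$ contain no elements of $\operatorname{im}\partial$ while every homogeneous element in the gradings of the other parity is a cycle; hence an element with an outgoing arrow and an element with an incoming arrow live in disjoint grading classes. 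Combined with the fact (which you correctly flag) that the standard-diagram complex is reduced, hence filtered isomorphic---not merely homotopy equivalent---to the staircase, this finishes the proof with no basis bookkeeping. You instead argue that the diagram basis must literally \emph{be} the staircase basis. That claim is in fact true (within each grading the $(i,j)$-positions of staircase generators are pairwise incomparable, so the graded filtered basis is rigid over $\F$), but as written your justification is shaky: the sentence about ``no generator can be the endpoint of two bigons in the wrong configuration unless the turning-point pattern appears'' comes close to assuming the lemma, and the paper's parenthetical uniqueness remark is stated without proof and is noted only in passing, so leaning on it as your key step makes the argument circular in spirit. Replacing your step (3) by the parity-of-gradings observation above yields the paper's proof and removes the only delicate point.
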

In other words, the order of the sequence of the intersection points along $\wb_s$ is either increasing or decreasing. {Curves with this property \change{are} called \emph{graphic} in \cite{GLV}.
\begin{definition}(\cite[Definition 2.5]{GLV})
    An oriented curve (or arc, or ray) $\wb \subset \R^2$ is called positive (resp.~negative) graphic if its intersection points with $\wa$ occur in the same (resp.~opposite) order along $\wa$ and $\wb$.
\end{definition}
}
 Whether $\wb_s$ is positive or negative graphic depends only on whether the L-space knot is positive or negative. Therefore if this property holds for one $\spin^c$ structure it holds for all spin$^c$ structures. It follows that the $(1,1)$ diagram is coherent. More precisely, $\wa$  separates $\R^2$ into upper and lower half planes, and in each half plane it cuts $\wb_s$ into $n$ arcs and $1$ ray that goes to infinity.  Each rainbow arc in the $(1,1)$ diagram around, say, the $z$ basepoint has a lift to an arc in the lower half plane. Since the condition in Lemma \ref{le:lspace_noturning} implies that the arcs in the lower half plane have the same orientation,  the rainbow arcs themselves have the same orientation.

The converse of Lemma \ref{le:lspace_noturning} is also true, as follows. {
\begin{lemma}(\cite[Proposition 2.6]{GLV})\label{le:graphic_implies_staircase}
    Given a standard $(1,1)$-diagram, if the lift $\wb_s$ is positive (resp.~negative) graphic, then it represents a positive (resp.~negative) staircase.
\end{lemma}}
This is proved by using the fact that the vertical homology and the horizontal homology are both one-dimensional for $\CFKi(Y,K,s).$

\section{Proof of the main theorem}\label{sec:mainthm}

In this section we prove Theorem~\ref{thm: main} by adapting the techniques outlined in Subsection~\ref{subsec: 11}. We also prove Proposition~\ref{prop: kj}.



Observe that if a standard $(1,1)$ diagram has only one rainbow arc then it is necessarily coherent, in which case it is the diagram for an L-space knot by the main result of Greene-Vafaee-Lewallen~\cite{GLV}. We can thus restrict our attention to $(1,1)$ diagrams which have at least two rainbow arcs.

Let $(S^1\times S^1,\alpha,\beta,z,w)$ be a $(1,1)$ diagram. As in the setting of L-space knots, we consider the lifts of the $\alpha$ and $\beta$ curves to the universal cover of $S^1 \times S^1$, which is $\R^2$, {where the covering map is the usual quotient $\R^2\surj\R^2/\Z^2$ for which $\Z\oplus \Z$ acts by addition on $\R^2$.} Recall that for a reduced $(1,1)$ diagram, in the universal cover given a fixed lift $\wa$, we can choose a lift  $\wb_s$ for each $s\in \Spin^c(Y)$. The curve $\wb_s$ is cut by $\wa$ into arcs and $2$ rays. 
Without loss of generality we may take $\tilde{\alpha}$ to be the line $\{y=0\} \subset \R^2$.

Define $h(\eta)$, the \emph{height} of an arc $\eta$  as follows:
\begin{align}h(\eta) =\max
\{\lvert y\rvert \colon(x,y)\in \eta\}.
\end{align}

We also define a notion of inconsistent arc in the universal cover $\R^2$, that mirrors the definition in $S^1\times S^1$. Consider the collection of curves $\{\tilde{\beta}_s:s\in \Spin^c(Y)\}$. If there are two orientations amongst all the arcs in each half plane, call the direction in the minority the \emph{inconsistent direction}, and the arcs in this direction the \emph{inconsistent arcs}. Call the remaining arcs \emph{consistent arcs} and their direction the \emph{consistent direction}.

\begin{definition} \label{def: vir_almost}
  A standard diagram $(S^1\times S^1,\alpha,\beta,z,w)$ is \emph{virtually almost coherent} if $\underset{s\in \Spin^c(Y)}{\bigcup}\tilde{\beta}_s$ contains exactly two inconsistent arcs.
\end{definition}

Note that if there are two inconsistent arcs, there must be one in each half plane. Clearly any standard virtually almost coherent $(1,1)$ diagram has exactly two inconsistent arcs, since each rainbow arc around $z$~(resp. $w$) basepoint lifts to at least one arc in the lower~(resp. upper) half plane. The converse is false, however, as illustrated by the example $K(9,2,0,4)$ in Figure \ref{fig: K9204}.

\begin{figure}
\begin{minipage}{.5\linewidth}
\centering
\subfloat[The standard diagram of $K(9,2,0,4).$ The $i$-th intersection point on the top side is identified with the $(i+4)$-th point on the bottom side.]{
\labellist
 \pinlabel { $w$}  at 58 25
 \pinlabel { $z$}  at 60 160
\endlabellist
\includegraphics[scale=0.85]{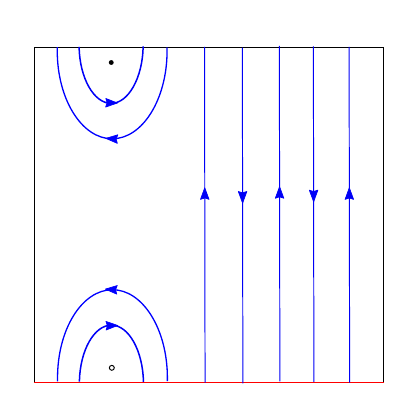}
}
\end{minipage}%
\begin{minipage}{.5\linewidth}
\centering
\subfloat[The lift to the universal cover of $K(9,2,0,4)$. Thick solid dots indicate a choice of basis for the knot Floer complex.]{
\includegraphics[scale=0.9]{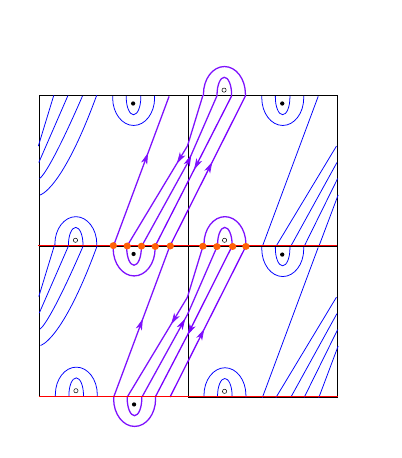}}
\end{minipage}\\
\begin{minipage}{.5\linewidth}
     \centering
      \subfloat[The lift to the universal cover.]{
       \begin{tikzpicture}[scale=0.8]
       \begin{scope}[thin, red!50!white]
	  \draw  (-1, 0) -- (9,0);
      \end{scope}
     \begin{scope}[thin, black!0!white]
          \draw  (0, -2) -- (0, 3);
      \end{scope}
      
   \foreach \i in {0,...,8}
   {
      	\filldraw (\i, 0) circle (2pt) node[] {};
       }
        \draw[new arc arrow=to pos 0.7 with length 2mm, blue] (3,0) arc (0:-180:1.5);
     \draw[new arc arrow=to pos 0.7 with length 2mm, blue] (1,0) arc (-180:0:0.5);
       \draw[new arc arrow=to pos 0.7 with length 2mm, blue] (2,0) arc (180:0:0.5);
         \draw[new arc arrow=to pos 0.7 with length 2mm, blue] (4,0) arc (0:180:1.5);
           \draw[new arc arrow=to pos 0.7 with length 2mm, blue] (7,0) arc (0:-180:1.5);
     \draw[new arc arrow=to pos 0.7 with length 2mm, blue] (5,0) arc (-180:0:0.5);
       \draw[new arc arrow=to pos 0.7 with length 2mm, blue] (6,0) arc (180:0:0.5);
         \draw[new arc arrow=to pos 0.7 with length 2mm, blue] (8,0) arc (0:180:1.5);
         
              \node   at (2.5,0.2) {\small $w$};
          \node   at (1.5,-0.2) {$z$};
                  \node   at (6.5,0.2) {\small $w$};
          \node   at (5.5,-0.2) {$z$};
  \draw [blue](0,0) -- (0,2);
  \draw [blue](8,0) -- (8,-1.6);
         
    \end{tikzpicture}\label{subfig: k9204_c}
     }
\end{minipage}%
\begin{minipage}{.5\linewidth}
     \centering
      \subfloat[The knot Floer complex.]{
       \begin{tikzpicture}[scale=0.9]
        \begin{scope}[thin, black!0!white]
          \draw  (-5, 0) -- (5, 0);
            \draw  (0, 2.5) -- (0, -2);
      \end{scope}
      \begin{scope}[thin, black!40!white]  
      \foreach \i in {-1,...,1}
   {
   \draw  (-1.75, \i) -- (1.75, \i);
     \draw  (\i, -1.75) -- (\i, 1.75);
       }
      \end{scope}
      \filldraw (0.8, 0.8) circle (2pt) node[] (){};
      	\filldraw (0.6, 0.6) circle (2pt) node[] (){};
       \filldraw (0.6, -0.6) circle (2pt) node[] (){};
       \filldraw (-0.6, 0.6) circle (2pt) node[] (){};
              \filldraw (-0.6, -0.6) circle (2pt) node[] (){};
              \filldraw (-0.4, -0.4) circle (2pt) node[] (){};
                \filldraw (0.4, -0.4) circle (2pt) node[] (){};
                  \filldraw (-0.4, 0.4) circle (2pt) node[] (){};
                    \filldraw (0.4, 0.4) circle (2pt) node[] (){};

     \draw  (0.6, 0.6) -- (0.6, -0.6);
          \draw  (0.6, 0.6) -- (-0.6, 0.6);
        \draw  (0.6, -0.6) -- (-0.6, -0.6);
             \draw  (-0.6, 0.6) -- (-0.6, -0.6);
                  \draw  (-0.4, -0.4) -- (0.4, -0.4);
                     \draw  (-0.4, -0.4) -- (-0.4, 0.4);
                        \draw  (0.4, 0.4) -- (0.4, -0.4);
                           \draw  (0.4, 0.4) -- (-0.4, 0.4);

    \end{tikzpicture} \label{subfig: K9204_knotfloer}
     }
\end{minipage}
\caption{The knot $K(9,2,0,4)$ has exactly two inconsistent arcs downstairs, but these lift to four inconsistent arcs in the universal cover. Similar examples abound amongst $(1,1)$ knots.}
\label{fig: K9204}
\end{figure}

It turns out that virtually almost coherent is equivalent to strongly almost coherent. We recall the definition of strongly almost coherence for the reader's convenience.
\strongly*

In order to show that the two versions of almost coherence are equivalent, we start with a geometric observation. 

\begin{lemma}\label{le: onebasepoint}
Suppose $\mathcal{H}$ is a virtually almost coherent diagram. Then the height of the inconsistent arc is less than $1$.
\end{lemma}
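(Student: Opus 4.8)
The plan is a proof by contradiction. Suppose some inconsistent arc $\gamma$ has $h(\gamma)\geq 1$. After applying the mirror involution (reflection in the horizontal direction) if necessary, I may assume that $\gamma$ is one of the arcs into which $\wa$ cuts some $\widetilde{\beta}_s$ and that it lies in the lower half-plane; write $\widetilde{\alpha}_k=\{y=k\}$ for the lifts of $\alpha$, so that $\wa=\widetilde{\alpha}_0$. In the standard diagram of Figure~\ref{fig: standard}, each of the $2q$ rainbow bumps sits inside the open band between two consecutive lifts of $\alpha$, while the rest of $\beta$ consists of strands running monotonically between consecutive lifts of $\alpha$ which, in Rasmussen's normalization \cite{Ras_homology}, are mutually parallel. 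In particular the lowest point of $\gamma$ is a rainbow apex, so $h(\gamma)$ is never an integer; hence $h(\gamma)\geq 1$ means precisely that $\gamma$ meets $\widetilde{\alpha}_{-1}$.

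First I would extract a smaller arc. Since $\gamma$ meets $\widetilde{\alpha}_{-1}$, the maximal arc $\eta$ of $\widetilde{\beta}_s$ lying below $\widetilde{\alpha}_{-1}$ and containing the lowest point of $\gamma$ is a sub-arc of $\gamma$. The crucial claim is that $\eta$ carries the same direction as $\gamma$ --- i.e.\ the two curves are oriented toward the endpoints having, consistently, the smaller (say) $x$-coordinate. This should follow from the structure above: the two portions of $\gamma$ flanking $\eta$ descend from $\wa$ to $\widetilde{\alpha}_{-1}$, and because the monotone strands are mutually parallel the net horizontal displacements of these two portions cancel, so the relative $x$-order of the ends of $\gamma$ coincides with that of the ends of $\eta$. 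In particular $\eta$ is again inconsistent.

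Next I would push $\eta$ back down to $\wa$. Let $\tau$ be a deck transformation with $\tau(\widetilde{\alpha}_{-1})=\wa$. As recalled in Subsection~\ref{subsec: 11} and \cite{GLV}, the chosen lifts $\{\widetilde{\beta}_s\}_{s\in\Spin^c(Y)}$ are a complete set of representatives for the orbits of the lifts of $\beta$ under the translation $T_\alpha$ along $\wa$ --- there are $|\Spin^c(Y)|=|H_1(Y;\Z)|$ of them, and this is exactly the amount by which $T_\alpha$ shifts the lifts of $\beta$ --- so $\tau(\widetilde{\beta}_s)=T_\alpha^{\,a}(\widetilde{\beta}_{s'})$ for some $a\in\Z$ and some $s'\in\Spin^c(Y)$. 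Put $\gamma':=T_\alpha^{-a}\bigl(\tau(\eta)\bigr)$. Deck transformations preserve the orientation of the plane, carry lifts of $\alpha$ to lifts of $\alpha$ compatibly with their orientations and with the $x$-order along them, and carry lifts of $\beta$ to lifts of $\beta$ compatibly with orientation; hence $\gamma'$ is one of the arcs into which $\wa$ cuts $\widetilde{\beta}_{s'}$, it lies in the lower half-plane, it carries the inconsistent direction, and $h(\gamma')=h(\gamma)-1$. Since $h(\gamma')<h(\gamma)$ we have $\gamma'\neq\gamma$, so $\bigcup_{s\in\Spin^c(Y)}\widetilde{\beta}_s$ contains two distinct inconsistent arcs in the lower half-plane, contradicting the assumption that $\mathcal{H}$ is virtually almost coherent (by Definition~\ref{def: vir_almost} there are exactly two inconsistent arcs, and, as observed just after that definition, one lies in each half-plane). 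Therefore $h(\gamma)<1$.

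I expect the main obstacle to be the direction-preservation claim in the second paragraph, that $\eta$ inherits the direction of $\gamma$. This is the one place the argument genuinely uses Rasmussen's standard form, via the parallelism of the monotone strands, and it takes a little care to make the decomposition of $\gamma$ around $\eta$ precise and to verify that the rainbow bumps which may appear in the flanking portions contribute cancelling horizontal displacements as well. A secondary technical point, used in the third paragraph, is the ``complete set of representatives'' description of $\{\widetilde{\beta}_s\}_{s\in\Spin^c(Y)}$, which is implicit in the setup of Subsection~\ref{subsec: 11} but worth spelling out.
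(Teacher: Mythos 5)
Your overall strategy is exactly the paper's: assume an inconsistent arc has height at least $1$, cut off a sub-arc by an integer-height translate of $\wa$, and apply a deck transformation to produce a second inconsistent arc of strictly smaller height in the same half-plane, contradicting the fact that a virtually almost coherent diagram has exactly two inconsistent arcs, one per half-plane. (The paper cuts along $\{y=\lfloor h(\gamma)\rfloor\}$ rather than $\{y=\mp 1\}$, a cosmetic difference, and your bookkeeping with horizontal translates of the chosen lifts $\widetilde{\beta}_s$ is correct and in fact more careful than the paper's one-line ``a translate of $D$ gives another inconsistent arc.'')

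The one place where your write-up adds substance is the direction-inheritance claim, and the justification you propose for it does not work as stated. The descending strands of the standard diagram are not mutually parallel: by Rasmussen's normalization (visible in Figure~\ref{fig: standard} and in Algorithm~\ref{alg:iseq}) there are two families of non-rainbow strands whose horizontal displacements per level differ by $2q$, and the portions of $\gamma$ flanking $\eta$ need not be monotone descents at all --- they may contain further excursions below $\widetilde{\alpha}_{-1}$ as well as rainbow caps and cups --- so the asserted exact cancellation of net horizontal displacements is neither justified nor true in general. Fortunately the claim you need (that the excursion through the lowest point of $\gamma$ inherits $\gamma$'s left/right direction) is true, but for a softer reason that uses only embeddedness: if an embedded arc has endpoints on a horizontal line, interior strictly below it, then the Jordan curve it bounds together with the segment between its endpoints is traversed counterclockwise exactly when the arc runs left-to-right, and since the region just below a lowest point lies in the unbounded complementary region, the direction of travel at a lowest point agrees with the left/right order of the endpoints. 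Applying this simultaneously to $\gamma$ (relative to $\wa$) and to $\eta$ (relative to $\widetilde{\alpha}_{-1}$) at their common lowest point gives $\operatorname{sign}$ agreement of the endpoint orders, i.e.\ $\eta$ is again inconsistent. With that substitution your proof is complete; note the paper itself asserts this orientation point without argument.
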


In particular, if $\gamma$ is an inconsistent arc in a virtually almost coherent diagram, the bigon formed by $\gamma$ and $\wa$ contains exactly one basepoint.

\begin{proof}
    Suppose  otherwise. Then there exists an inconsistent arc $\gamma$ in the upper half plane  such that $h(\gamma)>1$. Consider any bigon $D$ formed by $\gamma$ and the line $\{(x,y)\in\R^2:y=\lfloor h(\gamma)\rfloor\}$. A translate of $D$ gives another inconsistent arc of a smaller height, contradicting the virtually almost coherent condition.
\end{proof}

\begin{proposition} \label{pro: equiv_def}
    A standard diagram is virtually almost coherent if and only if it is strongly almost coherent.
\end{proposition}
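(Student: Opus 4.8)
The plan is to prove the equivalence by passing between the downstairs standard diagram and its lifts, using Lemma~\ref{le: onebasepoint} to control the inconsistent arcs. The two conditions both presuppose ``exactly two inconsistent arcs,'' one around each basepoint downstairs, respectively one in each half-plane upstairs; so the content is matching the two combinatorial descriptions of how those arcs sit relative to the other rainbow arcs. I would set up notation once: fix the lift $\wa=\{y=0\}$, and for each $s$ a lift $\wb_s$, so that the arcs in the lower half-plane are precisely the lifts of the rainbow arcs around $z$ and the arcs in the upper half-plane the lifts of the rainbow arcs around $w$. The key structural input is that, by Lemma~\ref{le: onebasepoint}, an inconsistent arc $\gamma$ in a virtually almost coherent diagram has height less than $1$, hence bounds a bigon with $\wa$ containing exactly one basepoint; in particular $\gamma$ is itself a lift of a rainbow arc, and the rainbow arc it covers appears downstairs as an inconsistent arc.

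For the forward direction (virtually almost coherent $\Rightarrow$ strongly almost coherent), I would argue as follows. Since there are exactly two inconsistent arcs upstairs and each inconsistent rainbow arc downstairs lifts to at least one inconsistent arc upstairs (around the appropriate basepoint), there are exactly two inconsistent arcs downstairs, one around each basepoint. Let $\gamma$ be the inconsistent arc around $z$; it has height $<1$ and covers the unique inconsistent downstairs arc, whose two endpoints are consecutive intersection points, with $w$ between them (``innermost'' position around $z$ up to translation). The remaining rainbow arcs around $z$ all lift to consistent arcs and so have height $\geq$ the height of $\gamma$; the nesting structure of rainbow arcs in a standard diagram then forces $\gamma$ to be innermost. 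The same holds for the inconsistent arc $\delta$ around $w$. Now I would analyze how $\gamma$ and $\delta$ interact: since $\gamma$ encloses only $w$ and $\delta$ encloses only $z$, and the identification pattern of top-with-bottom intersection points is a fixed shift by $s$, one traces the $\wb_s$-curve through the endpoints of $\gamma$ and $\delta$. There are exactly two combinatorial possibilities consistent with there being no \emph{third} inconsistent arc forced by a turning point: either the arcs $\gamma$ and $\delta$ are adjacent in the sense that $\wb_s$ runs from an endpoint of $\gamma$ directly to an endpoint of $\delta$ (they share a common endpoint), or they are separated by exactly two further rainbow arcs of the consistent direction whose orientations ``bridge'' between them. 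Unpacking this gives precisely the two bullet points of Definition~\ref{def: strongly_almost_coherent}.

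For the converse (strongly almost coherent $\Rightarrow$ virtually almost coherent), I would start from the two inconsistent arcs downstairs and one of the two configurations, lift everything, and count inconsistent arcs upstairs directly. In either configuration the two inconsistent arcs are ``small'' (they can be drawn enclosing a single basepoint), so each lifts to a single arc of height $<1$; I must check that no \emph{consistent} rainbow arc around $z$ lifts to an inconsistent arc, and that the lifted $\wb_s$ does not acquire a turning point (which would force an additional inconsistent arc, cf. the discussion before Lemma~\ref{le:lspace_noturning} and the $K(9,2,0,4)$ example). This is where the two specific configurations matter: in each case the path $\wb_s$, after passing the inconsistent arc, descends monotonically through the rest of the intersection points, because the consistent arcs all wind the same way and the shift $s$ lines them up. So the sequence of intersection-point labels along $\wb_s$ is ``monotone with one reversal,'' yielding exactly one inconsistent arc per half-plane, hence exactly two total.

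The main obstacle I expect is the bookkeeping in the forward direction: translating ``exactly two inconsistent arcs in $\bigcup_s \wb_s$'' into a statement about the cyclic pattern of rainbow-arc orientations around each basepoint, and then showing that the only way to avoid a third inconsistent arc (equivalently, a turning point) downstream of $\gamma$ and $\delta$ is one of the two listed local pictures. Getting this cleanly will likely require a careful case analysis of where the endpoints of the two inconsistent arcs sit relative to each other under the shift-by-$s$ identification, together with the observation (from Lemma~\ref{le: onebasepoint} and the nesting of rainbow arcs) that both inconsistent arcs are innermost around their basepoints. The converse direction should be comparatively routine once the forward normal form is established, since one just reads off the two explicit diagrams and checks the height and turning-point conditions.
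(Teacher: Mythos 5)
Your outline correctly isolates the right ingredients (Lemma~\ref{le: onebasepoint} to see that the inconsistent arcs upstairs are lifts of rainbow arcs, a case analysis in the forward direction, and a ``no extra inconsistency'' check in the converse), but in both directions the step that actually carries the proof is asserted rather than argued. In the forward direction, the entire content is your sentence ``there are exactly two combinatorial possibilities consistent with there being no third inconsistent arc,'' which you yourself flag as the main obstacle and never discharge. The paper closes this by enumerating, up to reflection, the possible relative positions of the two inconsistent arcs when they do not share an endpoint (Figure~\ref{fig: virtualtostrong}) and ruling out all but one: since the rest of $\wb_s$ consists of graphic chains, in the excluded configurations the two chains leaving the endpoints of an inconsistent arc would either have to cross each other or both escape to infinity, which is impossible for a single embedded lift with two ends; in the surviving configuration the one-basepoint constraint from Lemma~\ref{le: onebasepoint} forces exactly the second picture of Definition~\ref{def: strongly_almost_coherent}. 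Without some argument of this kind your forward direction is a restatement of the goal.

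The converse is where your sketch would actually fail as written. You assume the two downstairs inconsistent arcs ``each lift to a single arc of height $<1$'' and that $\wb_s$ then ``descends monotonically,'' but controlling the \emph{high} arcs (height $\geq 1$) upstairs is precisely the issue: the example $K(9,2,0,4)$ in Figure~\ref{fig: K9204}, which you cite, shows that having exactly two inconsistent arcs downstairs, with all other rainbow arcs wound the same way, does not prevent additional inconsistent arcs upstairs. Your appeal to ``the shift $s$ lines them up'' does not explain why the two specific configurations of Definition~\ref{def: strongly_almost_coherent} exclude this, so the argument begs the question. The paper handles it with a genuine idea you would need some substitute for: given any arc other than the two special lifts, look at the bigon $D$ it bounds with $\wa$; every local inconsistency along $\partial D$ must occur in one of the two local models of the definition, hence near a translated line $\wa'=\{y=c\}$, and replacing that sub-arc by the sub-arc of $\wa'$ with the same endpoints preserves the orientation of $\partial D$; after all such replacements only consistent extrema remain, so the arc is consistently oriented. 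Calling this direction ``comparatively routine'' underestimates it; as it stands, both halves of your proposal have gaps at exactly the points where the proof is nontrivial.
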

\begin{proof}

 Let $\mathcal{H}=(\alpha,\beta,z,w)$ be a standard diagram. Suppose $\mathcal{H}$ is virtually almost coherent. Note that by Lemma~\ref{le: onebasepoint} we can assume that the inconsistent arcs are lifts of rainbow arcs. If the two inconsistent arcs in the universal cover share a common end point, the first case in Definition~\ref{def: strongly_almost_coherent} applies. Suppose the two inconsistent arcs do not share an end point, then up to reflection with respect to vertical and horizontal axis, the only possible positions for the two inconsistent arcs are depicted in Figure \ref{fig: virtualtostrong} \subref{subfig:vtos_1},\subref{subfig:vtos_2} and \subref{subfig:vtos_3}.
 
 {We first show that \subref{subfig:vtos_1} and \subref{subfig:vtos_2} cannot occur. Notice that in either case the portion of the lift $\wb_s$ not shown in the figure consists only of graphic rays. Assign any direction to the inconsistent arcs, in both cases \subref{subfig:vtos_1} and \subref{subfig:vtos_2}, the two graphic rays from the end points of either inconsistent arc either intersect each other or both go to infinity, a contradiction.}

{Suppose we are in case \subref{subfig:vtos_3}. Once again, the portion of the lift $\wb_s$ not shown in the figure consists only of graphic rays. The bigon formed by an inconsistent arc and $\wa$ contains at most one base-point. No matter in which direction the inconsistent arcs go, it follows that the only possible diagram is the one depicted in Figure \ref{fig: virtualtostrong}~\subref{subfig:vtos_4}, where the dashed curves indicates the remaining graphic rays. This is the second case in Definition~\ref{def: strongly_almost_coherent}.}
    
         Suppose now that $\mathcal{H}$ is strongly almost coherent. We want to show that the only inconsistent arcs in the universal cover are those of height less than $1$; in other words, the lifts of the two inconsistent rainbow arcs. Consider any other arc and the bigon $D$ it forms with $\wa.$ Any inconsistent sub-arcs on $\partial D$ appears in one of the two local models  in Definition~\ref{def: strongly_almost_coherent}; observe that such local inconsistency also necessarily occurs on  $\wa' \colon= \{ (x,y)\in \R^2 :y = c \}$ for some $c \neq 0.$ Replacing the entire sub-arc of $\wb$ in the local model by the sub-arc of $\wa'$ with the same end points preserves the orientation of $\partial D$.  After performing this operation on every local inconsistency on $\partial D$, the only local maximum or minimum are of consistent orientation. It follows that $\partial D$ is consistently oriented.       
    \end{proof}

 \begin{figure}[htb!]
 \begin{minipage}{.5\linewidth}
      \centering
       \subfloat[]{
      \begin{tikzpicture}[scale=0.9]
        \begin{scope}[thin, red!50!white]
 	  \draw  (-1, 0) -- (4,0);
       \end{scope}
    
    \foreach \i in {0,...,3}
    {
       	\filldraw (\i, 0) circle (2pt) node[] {};
        }
       
       \draw [blue, dashed](2,0) -- (1.9,0.65);
       \draw [blue, dashed](1,0) -- (1.1,0.65);

            \draw[arc arrow=to pos 0.7 with length 2mm, blue] (3,0) arc (0:180:1.5);
            \draw[arc arrow=to pos 0.7 with length 2mm, blue] (2,0) arc (0:-180:0.5);
   
     \end{tikzpicture}\label{subfig:vtos_1}
      }
 \end{minipage}%
 \begin{minipage}{.5\linewidth}
      \centering
       \subfloat[]{
        \begin{tikzpicture}[scale=0.9]
        \begin{scope}[thin, red!50!white]
 	  \draw  (-1, 0) -- (5,0);
       \end{scope}
    
    \foreach \i in {-0.5,1.5,2.5,4.5}
    {
       	\filldraw (\i, 0) circle (2pt) node[] {};
        }
       
       \draw [blue, dashed](-0.5,0) -- (-0.4,0.65);
       \draw [blue, dashed](1.5,0) -- (1.4,0.65);
        \draw[arc arrow=to pos 0.7 with length 2mm, blue] (1.5,0) arc (0:-180:1);

            \draw[arc arrow=to pos 0.7 with length 2mm, blue] (2.5,0) arc (180:0:1);

     \end{tikzpicture}\label{subfig:vtos_2}
      }
 \end{minipage}\\
 \begin{minipage}{.5\linewidth}
      \centering
       \subfloat[]{
        \begin{tikzpicture}[scale=0.9]
        \begin{scope}[thin, red!50!white]
 	  \draw  (-1, 0) -- (6,0);
       \end{scope}
    
    \foreach \i in {0,1,3,4}
    {
       	\filldraw (\i, 0) circle (2pt) node[] {};
        }

            \draw[arc arrow=to pos 0.7 with length 2mm, blue] (3,0) arc (0:180:1.5);
            \draw[arc arrow=to pos 0.7 with length 2mm, blue] (4,0) arc (0:-180:1.5);
         
     \end{tikzpicture}\label{subfig:vtos_3}
      }
 \end{minipage}%
 \begin{minipage}{.5\linewidth}
      \centering
       \subfloat[]{
        \begin{tikzpicture}[scale=0.9]
        \begin{scope}[thin, red!50!white]
 	  \draw  (-1, 0) -- (6,0);
       \end{scope}
    
    \foreach \i in {0,...,4}
    {
       	\filldraw (\i, 0) circle (2pt) node[] {};
        }
       
       \draw [blue, dashed](4,0) -- (3.92,0.75);
       \draw [blue, dashed](0,0) -- (0.08,-0.75);
        \draw[arc arrow=to pos 0.7 with length 2mm, blue] (1,0) arc (180:0:0.5);
          \draw[arc arrow=to pos 0.7 with length 2mm, blue] (2,0) arc (-180:0:0.5);
            \draw[arc arrow=to pos 0.7 with length 2mm, blue] (3,0) arc (0:180:1.5);
            \draw[arc arrow=to pos 0.7 with length 2mm, blue] (4,0) arc (0:-180:1.5);
         
     \end{tikzpicture}\label{subfig:vtos_4}
      }
 \end{minipage}
     \caption{ Up to reflection with respect to vertical and horizontal axis, the only possible positions for the two inconsistent arcs are Figure \protect\subref{subfig:vtos_1} \protect\subref{subfig:vtos_2} and \protect\subref{subfig:vtos_3}. }
     \label{fig: virtualtostrong}
 \end{figure}
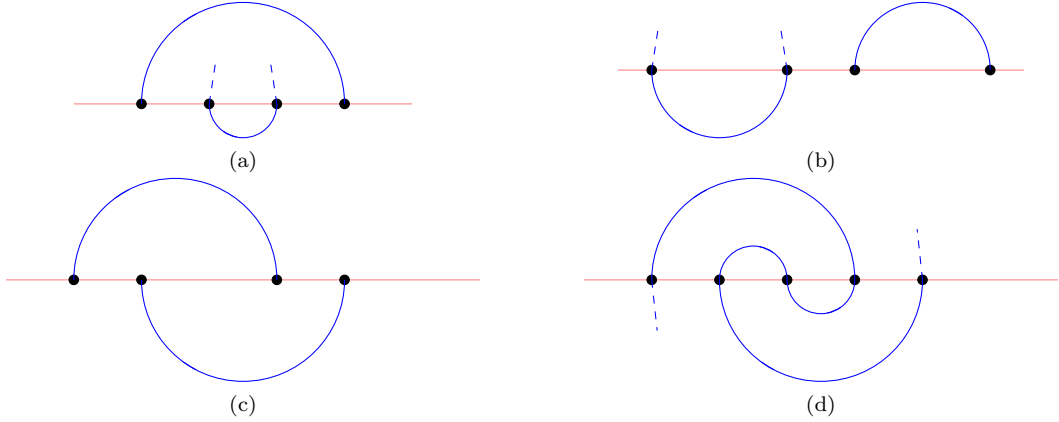

Thus to prove Theorem~\ref{thm: main} it suffices to prove the following.
\begin{theorem}\label{thm:(1,1)virtualboxcoherent}
    A standard diagram represents an almost L-space knot if and only if it is virtually almost coherent. Moreover, if $K\subset Y$ is  a $(1,1)$ almost L-space knot and $s$ is the exceptional spin$^c$ structure, then $\CFKi(Y,K,s)$  consists of the direct sum of a staircase and a box of length one.
\end{theorem}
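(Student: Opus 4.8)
# Proof Proposal for Theorem~\ref{thm:(1,1)virtualboxcoherent}

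The plan is to mirror the structure of Greene--Lewallen--Vafaee's proof of Theorem~\ref{thm: lspace} as reinterpreted in Subsection~\ref{subsec: 11}, working in the universal cover throughout. The proof naturally splits into two implications, and the bulk of the work lies in the direction ``virtually almost coherent $\implies$ almost L-space knot.''

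\textbf{The forward direction (geometric $\implies$ algebraic).} Assume $\mathcal{H}$ is virtually almost coherent. By Lemma~\ref{le: onebasepoint} the two inconsistent arcs have height less than $1$, so each is the lift of a single rainbow arc whose bigon with $\wa$ contains exactly one basepoint. First I would handle the non-exceptional spin$^c$ structures: for a spin$^c$ structure $s$ whose lift $\wb_s$ uses no inconsistent arc, every arc of $\wb_s$ in each half-plane is consistently oriented, so by the argument recalled before Lemma~\ref{le:lspace_noturning} there are no turning points, $\wb_s$ is graphic, and hence $\CFKi(Y,K,s)$ is a staircase (using the converse to Lemma~\ref{le:lspace_noturning}, i.e. \cite[Proposition 2.6]{GLV}). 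It remains to analyze the unique $s$ whose $\wb_s$ contains the two inconsistent arcs (one per half-plane). Here I would argue that, because the inconsistent arcs have height $< 1$ and by Proposition~\ref{pro: equiv_def} sit in one of the two local configurations of Definition~\ref{def: strongly_almost_coherent}, the curve $\wb_s$ decomposes into a graphic ``spine'' together with a localized feature that contributes precisely a length-one box: explicitly, $\wa \cap \wb_s$ breaks into the intersection points of the two inconsistent rainbow arcs together with chains that are each individually graphic. One then reads off the differential directly from the bigons between consecutive lifts, exactly as in the $(1,1)$ setting where ``$\langle \partial x, y\rangle \neq 0$ iff there is an embedded bigon.'' The claim is that the two extra intersection points coming from the inconsistent arcs, together with their immediate graphic neighbours, assemble into the four-generator box $\{a,b,c,d\}$ of Definition~\ref{def:almoststaircase}, split off as a direct summand from the staircase formed by the rest. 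To make the ``direct summand'' claim rigorous I would invoke that the vertical and horizontal homologies of $\CFKi(Y,K,s)$ are each one-dimensional (true for any knot in a rational homology sphere L-space in the relevant normalization), which forces the box to be acyclic in both the vertical and horizontal complexes and pins down the complex up to filtered homotopy equivalence as staircase $\oplus$ box. Having produced this $\CFKi$-type, Theorem~\ref{thm:almostlspace} (or rather its converse, which follows from the large surgery formula together with the rank computation) shows that sufficiently large surgery yields an almost L-space and that $K$ is not an L-space knot, so $K$ is an almost L-space knot.

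\textbf{The reverse direction (algebraic $\implies$ geometric).} Assume $K \subset Y$ is a $(1,1)$ almost L-space knot with standard diagram $\mathcal{H}$. By Theorem~\ref{thm:almostlspace}, in the exceptional spin$^c$ structure $s$, $\CFKi(Y,K,s)$ is either a staircase $\oplus$ box of length one or a positive (or negative) almost staircase, and in every other spin$^c$ structure it is a staircase. In particular, in every non-exceptional spin$^c$ structure there are no turning points, so by Lemma~\ref{le:lspace_noturning}'s reasoning those $\wb_{s'}$ are graphic and contribute only consistently oriented arcs. The heart of the argument is to show two things: (i) the almost-staircase possibility cannot occur for a $(1,1)$ knot, and (ii) in the staircase $\oplus$ box case the curve $\wb_s$ contains exactly two inconsistent arcs. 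For (ii): a box of length one has a generator $d$ with both an incoming and an outgoing differential arrow in the given basis; more to the point, passing to the unique $(1,1)$-basis, the box forces $\wb_s$ to possess turning points, and a careful count — using that the box is a single length-one square and contributes exactly the Euler-characteristic-neutral pair — shows there are exactly two turning points, hence exactly one inconsistent arc in each half-plane, i.e. exactly two inconsistent arcs total; combined with the non-exceptional structures contributing none, $\mathcal{H}$ is virtually almost coherent. For (i), the almost-staircase complexes of Definition~\ref{def:almoststaircase} have generators ($y_{k+1}$ of type (a), or $z$ of type (b)) supporting two outgoing (resp. two incoming) arrows landing in (resp. emanating from) distinct generators, which would force a branching of $\wb_s$ incompatible with $\wb_s$ being an embedded arc in $\R^2$ cut by the line $\wa$ into arcs with the turning-point structure; alternatively, and more cleanly, one checks that an almost staircase has horizontal or vertical homology that is not realizable by the arc/ray decomposition of a single embedded $\wb_s$. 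I would make this precise by the dimension-count on turning points: an almost staircase would require an odd or otherwise illegal number of them.

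\textbf{Main obstacle.} The delicate step is the forward direction's analysis of the exceptional spin$^c$ structure: I need to show that the two inconsistent rainbow arcs, in either of the two local configurations of Definition~\ref{def: strongly_almost_coherent}, glue to the rest of $\wb_s$ in such a way that the resulting complex splits as staircase $\oplus$ box rather than merely ``contains a box.'' The subtlety is that the box must be a \emph{direct summand} — no differential arrows may connect it to the staircase part — and verifying this requires understanding precisely which bigons appear between the inconsistent-arc intersection points and their neighbours in the universal cover. I expect to resolve this by combining the explicit local pictures (Figure~\ref{fig: virtualtostrong}\subref{subfig:vtos_4} and the shared-endpoint case) with the one-dimensionality of vertical and horizontal homology, which rigidifies the complex enough to rule out the unwanted connecting arrows. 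The symmetric difficulty in the reverse direction — ruling out the almost-staircase type — is comparatively softer, since it is a finite case-check on the turning-point combinatorics of the two explicit almost-staircase models.
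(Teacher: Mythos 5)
Your forward direction (virtually almost coherent $\Rightarrow$ almost L-space knot, with $\CFKi$ of staircase-plus-box type) follows the paper's Proposition~\ref{prop:sufficient} in outline, but the step you flag as the ``main obstacle'' is exactly where the paper does its real work, and your proposed fix is not the one that succeeds. The paper does not appeal to one-dimensionality of vertical/horizontal homology to force the splitting; instead it runs an explicit finite case analysis on the two inconsistent arcs (shared endpoint or not, and where the predecessor/successor of the neighbouring intersection points sit relative to the interval they cut out on $\wa$), writes down the resulting differentials, and exhibits the box as an explicit subcomplex -- sometimes only after a small change of basis, e.g.\ generated by $\{a,\,b+g,\,c+f,\,d+e\}$ -- whose quotient is a graphic chain and hence a staircase by the argument of \cite[Proposition 2.6]{GLV}. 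Your sketch also silently assumes that $\wb_s$ in the exceptional spin$^c$ structure contains arcs besides the two inconsistent ones (otherwise no box forms at all); ruling out that corner case is a separate combinatorial argument about cyclic orders in the standard diagram (Lemma~\ref{le: cornercase}) which your plan omits.

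The more serious gap is in your reverse direction. You propose to (i) rule out the almost-staircase type directly from the $(1,1)$ geometry via a ``branching'' or arrow-count argument at a generator with several incident differentials, and (ii) count turning points in the staircase-plus-box case. For (i), a generator supporting two arrows to distinct generators is not geometrically forbidden -- the length-one box itself has such generators and is realized by $(1,1)$ diagrams -- and, crucially, Theorem~\ref{thm:almostlspace} identifies $\CFKi(Y,K,s)$ only up to filtered change of basis, so the basis coming from the $(1,1)$ diagram need not be the standard almost-staircase basis; per-generator arrow counts are not invariant under such changes, so no contradiction can be read off this way (the same objection undermines the suggested vertical/horizontal homology variant, since those homologies have rank one for the almost staircase as well). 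For (ii), the ``careful count'' giving exactly two turning points is precisely the content that must be proved, and it cannot be done by pretending the diagram basis is the standard one. The paper's route is basis-independent: it records only the number of generators in each Maslov grading (Lemma~\ref{le:threearrows}), shows the unique extremal-grading generator has both a predecessor and a successor along $\wb_s$ (Lemma~\ref{le:presec}), and then truncates the portion around it (Definition~\ref{def:trunc}) to obtain a graphic chain, from which virtual almost coherence follows; notably this argument treats the staircase-plus-box and almost-staircase possibilities uniformly, and the almost staircase is excluded only a posteriori, by combining with the sufficiency computation -- not by a direct geometric obstruction as you propose.
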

We will prove both directions of this theorem individually in Proposition \ref{prop:neccesary} and \ref{prop:sufficient}. First we show that if $K$ is a  $(1,1)$ almost L-space knot then any standard diagram for $K$ is virtually almost coherent.

Up to translation by $U$ as a whole,  $\mathfrak{T}(\hH,s) = \wa \cap \wb_s$ gives rise to a specific basis of $\CFKi(Y,K,s)$ as follows. Choose any $x\in \mathfrak{T}(\hH,s)$. Starting from $[x,0,A_{w,z}(x)]$, {for each non-trivial component $[y,i,j]$ of $\partial  [x,0,A_{w,z}(x)]$, or each $[y,i,j]$ such that $[x,0,A_{w,z}(x)]$ has non-zero coefficient in  $\partial [y,i,j]$,} add $[y,i,j]$ into the basis. Continue this process until exhausting $\mathfrak{T}(\hH,s)$. The resulting basis is connected by components of the differential of the chain complex. Abusing notation we let $\mathfrak{T}(\hH,s)$ also denote this specific basis for $\CFKi(Y,K,s)$.

Throughout the remainder of this section we let $K$ be an almost L-space knot in a rational homology sphere $Y$ with exceptional spin$^c$ structure $s$. 
\begin{lemma} \label{le:threearrows}
Let $\hH$ be a standard diagram representing an almost L-space knot.
 The generators in the basis $\mathfrak{T}(\hH,s)$ are in either $3$ or $4$ consecutive Maslov gradings. Moreover, the number of the generators in each Maslov grading (from high to low) is given by one of the following ;
 \begin{enumerate}[label=(\alph*)]
     \item $(1,*,*)$,   \label{it:gen_num_1}
     \item $(*,*,1)$, \label{it:gen_num_2}
     \item $(1,2,*,*)$,\label{it:gen_num_3}
     \item $(*,*,2,1)$,\label{it:gen_num_4}
 \end{enumerate}
 where each $*$ indicates some (potentially different) positive integer. 
\end{lemma}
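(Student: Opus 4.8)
The plan is to read off the Maslov grading data directly from the structure of $\CFKi(Y,K,s)$ given by Theorem~\ref{thm:almostlspace}, and then match it against the graded pieces of the basis $\mathfrak{T}(\hH,s)$ coming from the $(1,1)$ diagram. By Theorem~\ref{thm:almostlspace} (applied to $K$ or its mirror, so that $K$ is a positive almost L-space knot) the complex $\CFKi(Y,K,s)$ is, up to $U$-translation and filtered chain homotopy equivalence, either a positive staircase direct sum a length-one box, or a positive almost staircase. First I would record the Maslov gradings of the generators in each of these two model complexes. For a positive staircase $\{x_\ell\}\cup\{y_\ell\}$ the $x_\ell$ sit in one Maslov grading, say $m$, and the $y_\ell$ sit in grading $m+1$ — two consecutive gradings with counts $(N+1, N)$ reading from high to low; adjoining a length-one box $\{a,b,c,d\}$ (with $\gr(d)=\gr(b)+1=\gr(c)+1=\gr(a)+2$) spreads things over three consecutive gradings, and by placing the box appropriately (its middle grading coincides with one of the staircase gradings) one gets counts of the shape $(1,\ast,\ast)$ or $(\ast,\ast,1)$ depending on whether the top/bottom of the box is the extreme grading — this is cases \ref{it:gen_num_1} and \ref{it:gen_num_2}. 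For a positive almost staircase of type (a) or (b) in Definition~\ref{def:almoststaircase}, I would compute the spread of Maslov gradings directly from the differential relations given there: the generators occupy four consecutive gradings, and counting the top two (resp.\ bottom two) gradings one sees the pattern $(1,2,\ast,\ast)$ (resp.\ $(\ast,\ast,2,1)$) — cases \ref{it:gen_num_3} and \ref{it:gen_num_4}. The key input is that in an almost staircase there is a single generator ($y_1$ or its dual) in the extreme grading and exactly two generators ($x_1$ and $x'_{k+1}$, or duals) in the next one, with everything else strictly interior.

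The remaining point is that the basis $\mathfrak{T}(\hH,s)$ obtained from the $(1,1)$ diagram — the one constructed just above the lemma by walking along bigons — realizes one of these model complexes \emph{on the nose}, not merely up to homotopy equivalence, so that the grading counts are genuinely invariants we may read off. Here I would invoke that $\mathfrak{T}(\hH,s)$ is a basis in which the differential is a sum of bigon contributions, hence each basis element has a well-defined Maslov grading, and that $\CFKi$ of a $(1,1)$ knot computed from a reduced diagram is a \emph{reduced} complex (no homotopies to cancel), so it is already in the model form of Theorem~\ref{thm:almostlspace}; the grading distribution of a reduced representative is determined. Concretely: the Euler characteristic argument $\chi(\widehat{CFK}(\hH,s))=\pm 1$ already forces $|\mathfrak{T}(\hH,s)|=2n+1$, and combined with the reduced model it pins down which of the four patterns occurs.

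I expect the main obstacle to be the bookkeeping of \emph{which} grading the length-one box lands in relative to the staircase — a priori the box could be shifted so that it contributes to gradings disjoint from, or straddling, the staircase's two gradings, which would give patterns like $(\ast,2,\ast)$ or a five-grading spread. The fix is that $\CFKi(Y,K,s)$ in the exceptional $\spincs$ of an almost L-space knot has total homology rank $3$ (one tower plus the box's contribution to $\widehat{HF}$), and the box must be positioned so that the staircase's two surviving gradings interlock with the box — ruling out the stray configurations and leaving exactly the shapes in \ref{it:gen_num_1}--\ref{it:gen_num_4}. Making this interlocking precise, rather than the routine grading arithmetic for each model, is the genuine content of the argument.
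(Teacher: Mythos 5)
Your overall skeleton (invoke Theorem~\ref{thm:almostlspace}, compute the Maslov distribution of the model complexes, transfer it to $\mathfrak{T}(\hH,s)$ using the fact that the relevant property is insensitive to filtered, grading-preserving change of basis) is the same as the paper's, and that part is fine. But the step you yourself single out as ``the genuine content'' --- pinning down the Maslov position of the length-one box relative to the staircase --- is not actually resolved by your proposed fix. The rank-$3$ condition on $\widehat{\HF}$ of the large surgery is exactly what Theorem~\ref{thm:almostlspace} already encodes, and it is blind to an overall Maslov shift of the box summand: a staircase direct sum a box still has the right ranks no matter which grading the box sits in, so ``rank $3$ plus one tower'' gives no interlocking constraint and does not rule out the box lying in gradings disjoint from the staircase. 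The paper's mechanism is different and genuinely uses the $(1,1)$ geometry: the basis $\mathfrak{T}(\hH,s)$ is \emph{connected} by differential components (it is built by walking along bigons, and consecutive intersection points along $\wb_s$ cobound bigons), while the split basis $\mathfrak{S}_1\cup\mathfrak{S}_2$ is obtained from it by a Maslov-homogeneous change of basis; mixing elements of $\mathfrak{S}_1$ and $\mathfrak{S}_2$ homogeneously is only possible if the staircase's two gradings and the box's three gradings share a level, i.e.\ overlap in one or two consecutive gradings. Your proposal never uses this connectivity, and without it (or a genuinely different grading constraint, which you do not supply) the stray configurations you worry about are not excluded.

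There is a second, concrete error in your case analysis: the almost staircases of Definition~\ref{def:almoststaircase} occupy \emph{three} consecutive Maslov gradings, not four. In type (a) all the $y$'s (including $y'_{k+1}$) share one grading, the $x$'s sit one above, and $z$ sits one below, giving counts $(2k,\,2k+2,\,1)$, i.e.\ pattern \ref{it:gen_num_2}; in type (b) one gets $(1,\,2k+2,\,2k+2)$, i.e.\ pattern \ref{it:gen_num_1}. There is no ``single generator, then exactly two'' four-grading shape there. The four-grading patterns \ref{it:gen_num_3} and \ref{it:gen_num_4} arise instead from the staircase-plus-box case when the box overlaps the staircase in exactly one grading --- precisely the configuration your ``full interlocking'' claim would (incorrectly) exclude. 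So even granting your fix, your accounting would both miss the true source of \ref{it:gen_num_3}--\ref{it:gen_num_4} and misattribute them to the almost staircases; the argument as proposed does not establish the lemma.
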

\begin{proof}

    According to Theorem \ref{thm:almostlspace},  up to filtered change of basis, $\CFKi(Y,K,s)$ is either a summand of a staircase and a length $1$ box  or an almost staircase. {Since the filtered change of basis preserves the sequence of the Maslov gradings, in each case we only need to check whether the condition holds for the standard basis.}

Start with the first case.  Suppose that after a filtered change of basis,  $\mathfrak{T}(\hH,s)$ becomes the set $ \mathfrak{S}_1 \cup \mathfrak{S}_2$ where $\mathfrak{S}_1$ generates {a length $k$ positive} staircase summand and  $\mathfrak{S}_2$ generates a box summand. 
{ For such a change of basis to be possible, $\mathfrak{S}_1$ and $\mathfrak{S}_2$ must coincide in at least $1$ Maslov grading. 
 Without the loss of generality, assume $\mathfrak{S}_1$ is supported in Maslov gradings $0$ and $1$. Thus the lowest Maslov grading of generators in $\mathfrak{S}_2$ can only be $-2,-1,0$ or $1$. Corresponding to these $4$ cases, the number of the generators in each Maslov grading (from high to low) is given by $(k,k+2,2,1), (k+1,k+3,1), (1,k+2,k+2)$ and $(1,2,k+1,k+1),$ respectively. This proves the lemma for the case of the positive staircase with a box summand.  Since taking the dual complex reverses the order of the sequence of the Maslov gradings, the case of negative staircases follows. } 

Suppose that after a filtered change of basis $\mathfrak{T}(\hH,s)$ generates a {length $k$ positive} almost staircase complex. {We check that for the standard basis in Definition~\ref{def:almoststaircase} the number of the generators in each Maslov grading is given by $(k,k+2,1)$ when $k$ is even, and $(1,k+1,k+1)$ when $k$ is odd.  The case of negative almost staircases follows from taking the dual complex.}
\end{proof}

We now refine the definition of ``turning point" given in Definition~\ref{def:turningpoint}. Note that by definition, a turning point necessarily has a predecessor and a successor (along $\wb_s$, following its orientation).
\begin{definition}
    Suppose $b$ is a turning point and $a$ its predecessor. If {$a$ has non-zero coefficient in  $\partial b$}, we call $b$ a \emph{positive} turning point and if {$b$ has non-zero coefficient in $\partial a$}, we call $b$ a \emph{negative} turning point.
\end{definition}

Note that a turning point together with its predecessor and successor live in $3$ consecutive Maslov gradings {by Lemma~\ref{lem:turningpointarrowsgradings}}. The order in which $\wb_s$ passes through {the three points} depends on the sign of the turning point. 

\begin{lemma} \label{le:maslov_shift}
    Moving along $\wb_s$, suppose $b$ and $c$ (in that order) are two closest turning points. Consider the set of the points in $\wa \cap \wb_s$ between $b$ and $c$ along $\wb_s$ (including the end points) and label them in order by $x_0 = b, x_1, \cdots, x_\ell, \cdots.$
  Their Maslov gradings satisfy \begin{align*}
     \gr(x_\ell) = 
      \begin{cases}
        \gr(b)  \qquad & \ell  \text{ is even}\\
        \gr(b) + 1   & \ell \text{ is odd and } b  \text{ is a positive turning point }\\
        \gr(b) - 1   & \ell \text{ is odd and } b  \text{ is a negative turning point}.
      \end{cases}
  \end{align*}
\end{lemma}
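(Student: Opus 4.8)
The plan is to reduce the statement to two facts about the sequence of points of $\wa\cap\wb_s$ lying on the run from $b$ to $c$: first, that consecutive points $x_\ell,x_{\ell+1}$ of the run are joined by exactly one component of the differential of $\CFKi(Y,K,s)$, and that these components alternate in direction; second, that the direction of the component between $x_0=b$ and $x_1$ is dictated by the sign of the turning point $b$. Granting these, the Maslov formula is immediate: since the differential lowers $\gr$ by $1$, alternation forces $\gr(x_\ell)=\gr(b)$ for $\ell$ even and $\gr(x_\ell)=\gr(b)\pm 1$ for $\ell$ odd, and the sign is read off from the second fact. I would also record at the outset the purely combinatorial input: because $b$ and $c$ are \emph{consecutive} turning points there is no turning point strictly between them, so the positions $n(x_0),n(x_1),\dots,n(x_m)$ measured along $\wa$ form a strictly monotone sequence; the decreasing case being symmetric, I may assume it is increasing, so the run ``looks like'' a graphic chain exactly as in the L-space analysis recalled after Lemma~\ref{le:lspace_noturning}.

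The geometric heart of the argument is that consecutive points $x_\ell,x_{\ell+1}$ of the run are adjacent along $\wa$, so that the sub-arc of $\wb_s$ between them together with the segment of $\wa$ from $n(x_\ell)$ to $n(x_{\ell+1})$ bounds an embedded bigon of the reduced diagram. Adjacency follows because any point of $\wa\cap\wb_s$ lying strictly between $x_\ell$ and $x_{\ell+1}$ along $\wa$ is some $x_j$; if $x_j$ lay in the same run, monotonicity would force $\ell<j<\ell+1$, which is impossible, while the alternative that it lies in another run is excluded for an almost L-space knot because, by Theorem~\ref{thm:almostlspace} and Lemma~\ref{le:threearrows}, the complex is too small to support the extra turning points such a nested configuration would create (contrast $K(9,2,0,4)$ in Figure~\ref{fig: K9204}, which is not virtually almost coherent and indeed has extra turning points). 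Once adjacency is known, the bigon contains no other intersection point, and since $\wb_s$ is embedded it contains no other arc of the diagram, so it is a genuine bigon and contributes a nonzero term to the differential between the corresponding generators. Finally, since every crossing of $\wb_s$ with $\wa$ is transverse, $\wb_s$ switches sides of $\wa$ at each $x_\ell$, so these bigons lie alternately above and below $\wa$.

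The last step is the orientation bookkeeping. At an interior point $x_\ell$ with $1\le\ell\le m-1$, the two incident bigons have their $\wa$-edges on opposite sides of $x_\ell$ (one running left to $n(x_{\ell-1})$, one running right to $n(x_{\ell+1})$, because the run is increasing) and lie on opposite sides of $\wa$; the standard rule identifying the source of a bigon's arrow then makes the two arrows at $x_\ell$ point the same way, both into or both out of $x_\ell$, exactly as for a graphic chain. Hence $\gr(x_{\ell-1})=\gr(x_{\ell+1})$ and the gradings alternate between two consecutive values along the run. At the endpoint $b=x_0$, which is itself a turning point, both incident bigons---the one to the predecessor $a$ and the one to $x_1$---have their $\wa$-edges on the \emph{same} side of $b$ (both to the right if $b$ is a local minimum of position, both to the left if a local maximum) while still lying on opposite sides of $\wa$, so the same rule makes the two arrows at $b$ point in \emph{opposite} directions. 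Thus if $b$ is a positive turning point, i.e. the arrow $b$--$a$ is outgoing from $b$ ($a\in\partial b$), then the arrow $b$--$x_1$ is incoming to $b$ and $\gr(x_1)=\gr(b)+1$; if $b$ is negative, the arrow $b$--$x_1$ is outgoing and $\gr(x_1)=\gr(b)-1$. Combined with the alternation at interior points this gives the three cases in the statement.

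I expect the main obstacle to be the adjacency claim---equivalently, the existence of the connecting bigon---in the second paragraph: this step genuinely uses that we are in the almost L-space setting, since for general $(1,1)$ knots consecutive points of a monotone run need not be adjacent along $\wa$, so it cannot be purely diagrammatic and must invoke the classification of almost L-space complexes together with Lemma~\ref{le:threearrows}. By comparison, the orientation bookkeeping of the third paragraph is routine, but it must be carried out consistently with the orientation conventions on $\wa$ and $\wb_s$ fixed in Section~\ref{subsec: 11}, since it is exactly this consistency that pins down which of the two signs occurs in the odd case.
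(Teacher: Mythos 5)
There is a genuine gap at what you call the geometric heart of the argument: the claim that consecutive points $x_\ell,x_{\ell+1}$ of the run are adjacent along $\wa$ is false, and the paper's own family $K_j$ ($j\geq 1$) provides a counterexample inside the exact setting of the lemma. In Figure~\ref{fig: kj}\subref{subfig: kj_c}, with the notation of the proof of Proposition~\ref{prop: kj}, the only two turning points of $\wb_s$ are $x_3$ and $x_{-3}$, so the run of the lemma is $x_3,x_0,x_{-3}$; the consecutive pair $x_3,x_0$ sits at positions $3$ and $0$ on $\wa$ with $x_1$ and $x_2$ strictly between them, and nevertheless $x_0\in\partial x_3$, the relevant bigon having its $\wa$-edge pass through $x_1,x_2$ and its interior contain the whole arc of $\wb_s$ joining $x_1$ to $x_2$. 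This also shows your justification of adjacency is a non sequitur: nested configurations create no turning points at all (coherent diagrams with $q\geq 2$ nested rainbow arcs have none), so neither Theorem~\ref{thm:almostlspace} nor Lemma~\ref{le:threearrows} can exclude them, and they occur in precisely the almost L-space diagrams at issue. Finally, the premise behind your phrase ``a genuine bigon'' --- that a bigon contributes only if its interior meets no other intersection points or arcs --- misreads the criterion recalled in Section~\ref{subsec: 11}: the differential counts embedded bigons cobounded by sub-arcs of $\wa$ and $\wb_s$, and their interiors may meet other arcs of the diagram. Under your premise the component $x_0\in\partial x_3$ above would be declared absent, so the proposal fails to establish the lemma exactly where it is needed.

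The conclusion you are aiming for --- that consecutive points of the run are joined by differential components whose directions alternate, the direction at $b$ being pinned down by its sign --- is indeed what the paper's proof uses, and your orientation bookkeeping in the final paragraph is reasonable once those components are available. But producing them does not require (and cannot use) adjacency along $\wa$: one must argue, as the paper does using the facts from \cite{GLV} about how the two arrows at turning and non-turning points are directed, that along a monotone stretch bounded by the turning points the differentials are $\partial x_\ell=x_{\ell+1}+x_{\ell-1}$ (positive case) or $\partial x_{\ell+1}=\partial x_{\ell-1}=x_\ell$ (negative case) for odd $\ell$, after which the grading statement is immediate. As written, your argument does not deliver these differentials.
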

\begin{proof}
    Suppose that $b$ is a positive turning point. Then we have 
    \[
    \partial x_\ell   =  x_{\ell + 1} + x_{\ell - 1}
    \]
    for each odd $\ell$. If $b$ is a negative turning point, then we have 
    \[
    \partial x_{\ell + 1}  =  \partial x_{\ell - 1} = x_\ell
    \]
    for each odd $\ell$. The Maslov grading results follow.
\end{proof}




{Recall from Lemma~\ref{le:threearrows} that there is either a unique generator of $\mathfrak{T}(\mathcal{H},s)$ of maximal Maslov grading, or a unique generator of minimal Maslov grading.}
\begin{lemma} \label{le:presec}
    Let $\hH$ be a standard diagram representing an almost L-space knot. The unique generator in $\mathfrak{T}(\hH,s)$ with the highest/lowest Maslov grading has both a predecessor and a successor along $\wb_s.$
\end{lemma}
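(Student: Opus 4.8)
The plan is to prove the equivalent assertion that the unique generator of extreme Maslov grading is neither the first nor the last point in the sequence of intersection points along $\wb_s$. By Lemma~\ref{le:threearrows} there is either a unique generator $p$ of maximal Maslov grading $M$ (cases~\ref{it:gen_num_1} and~\ref{it:gen_num_3}) or a unique generator of minimal Maslov grading (cases~\ref{it:gen_num_2} and~\ref{it:gen_num_4}). I will write out the first situation; the second follows by the entirely dual argument, or equivalently by applying the first case to the mirror knot. So suppose for contradiction that $p$ is an endpoint of $\wb_s$; the two cases ``first point'' and ``last point'' are interchanged by reversing the orientation of $\wb_s$, so we may assume $p$ is the first point. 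Then $p$ has no predecessor, and apart from the ray running off to infinity the only arc of $\wb_s$ incident to $p$ joins it to its successor $q$.

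That arc gives a single differential arrow between $p$ and $q$; since $\gr(p)=M$ is maximal this arrow cannot point into $p$, so $\partial p = q$, and as $p$ meets no other arc this is all of $\partial p$ — in particular $\gr(q)=M-1$. Now $p$ starts the first run of $\wb_s$, and along any run the Maslov gradings alternate between two consecutive values (this is Lemma~\ref{le:maslov_shift}, whose proof applies verbatim to the initial run). Hence if this run contained a third point that point would again lie in grading $M$, contradicting the uniqueness of $p$; so the first run is $\{p,q\}$ and $q$ is the first turning point of $\wb_s$. A turning point admits an outgoing arrow, so $\partial q\neq 0$, and therefore $\partial^2 p = \partial q \neq 0$, contradicting $\partial^2 = 0$. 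Thus $p$ is not an endpoint, so it has both a predecessor and a successor.

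For a unique minimal generator $p$ the single arc at the endpoint $p$ instead contributes an arrow $\partial q\ni p$ out of its successor $q$, and the same run argument shows $q$ is the first turning point; a turning point admits an incoming arrow, say $\partial r\ni q$, so since the only arc at $p$ goes to $q$ — making $q$ the unique generator whose differential can contain $p$ — the coefficient of $p$ in $\partial^2 r$ equals the nonzero coefficient of $p$ in $\partial q$, once more contradicting $\partial^2=0$. I expect the one point genuinely requiring care to be the identification of $q$ with the first turning point: it rests on the fact, already used in Lemma~\ref{le:maslov_shift}, that each run of $\wb_s$ behaves like an $L$-space ``staircase'', its points alternating between sources and sinks of the differential, so that prolonging the initial run past $q$ would reproduce the extreme grading and hence that run must consist of $p$ and $q$ alone.
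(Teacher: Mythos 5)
Your argument breaks at the step ``as $p$ meets no other arc this is all of $\partial p$.'' From the single arc joining $p$ to its successor $q$ you may conclude $q\in\partial p$, but not $\partial p=q$: in a $(1,1)$ diagram the differential is \emph{not} supported only on pairs of points that are adjacent along $\wb_s$. A nonzero component $\langle\partial x,y\rangle$ corresponds to an embedded bigon cobounded by $\wa$ and $\wb_s$, and the $\wb_s$-side of such a bigon may pass through further points of $\wa\cap\wb_s$, provided they lie on $\wa$ outside the $\wa$-edge of the bigon. The paper's own computation for the knots $K_j$ exhibits exactly this: in Figure~\ref{fig: kj}\subref{subfig: kj_c} one has $\partial x_{4}=x_{5}+x_{3}+x_{1}$, even though the only neighbours of $x_4$ along the lifted curve are $x_5$ and $x_{-1}$; the arrows to $x_3$ and $x_1$ come from long bigons whose $\wb_s$-boundary runs through intermediate intersection points. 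So $\partial p$ may contain extra terms of grading $M-1$, and then $\partial^2 p=\partial q+\partial(\text{extra terms})$ can vanish without contradiction. The same unjustified adjacency assumption is what underlies the dual case, where you need $q$ to be ``the unique generator whose differential can contain $p$'' in order to isolate the coefficient of $p$ in $\partial^2 r$. The run/alternation part of your argument (that $q$ must be the first turning point) is fine, since it only uses the existence and directions of the arrows between consecutive points, as in Lemma~\ref{le:maslov_shift}; but the $\partial^2=0$ contradiction genuinely requires full control of a differential, which is precisely what is not available.

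For contrast, the paper's proof never computes any $\partial$ completely. Assuming the extreme generator $a$ has no predecessor, it uses only the grading distributions $(*,*,1)$ and $(*,*,2,1)$ from Lemma~\ref{le:threearrows}, the turning-point structure, and the geometric fact that the ray of $\wb_s$ emanating from $a$ lies outside the bigon between $a$ and its successor $b$; the contradiction is then about where the remaining generators must sit on $\wa$, not about $\partial^2=0$. To rescue your route you would need to rule out long bigons emanating from an endpoint of $\wb_s$ carrying the unique extreme grading, and that is essentially the missing content.
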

\begin{proof}   
    Up to reflecting $\R^2$ about $\wa$ 
    (recall that the mirror of an almost L-space knot is also an almost L-space knot)\change{,   we may assume} by Lemma~\ref{le:threearrows}  that the number of generators in each Maslov grading is either $(*,*,1)$ or $(*,*,2,1)$. Let $a$ be the unique generator with the lowest Maslov grading.   Assume towards a contradiction that $a$ has no predecessor. Then $a$ has a successor $b$, and $b$ is  necessarily a turning point; let $c$ be the successor of $b.$ {In each of the following cases we will show that the ray in the positive direction of $\wb_s$ must intersect the bigon between $a$ and $b$, contradicting the fact that $\wb_s$ is an embedded curve.}
\begin{itemize}
    \item Case of $(*,*,1)$. There are no turning points after $b$, so all the remaining generators lie between $a$ and $c$ on $\wa$ (and in particular, between $a$ and $b$). {Therefore the ray in the positive direction of $\wb_s$ must intersect the bigon between $a$ and $b$, a contradiction.}
    \item Case of $(*,*,2,1).$ {By considering the number of generators in each Maslov grading, we can see that there are two} possibilities: 
    \begin{enumerate}[label=\arabic*.)]
       \item  \label{it:presuc_subcase3}$c$ is not a turning point, \change{and }neither is its \change{successor}, $d$. The successor of $d$, denoted by $e$, is a turning point; 
          \item $c$ is a turning point. \label{it:presuc_subcase1}
    \end{enumerate}
      {We often find it helpful to record diagrammatically the Maslov grading and the direction of $\wb_s$. See Figure \ref{fig: maslovdiagram}\subref{subfig: maslovdiagram_1} 
 and \ref{fig: maslovdiagram}\subref{subfig: maslovdiagram_2}  for the corresponding diagram of case~\ref{it:presuc_subcase3} and   case~\ref{it:presuc_subcase1}
    respectively.}\\
    Case \ref{it:presuc_subcase3}: note that $b$ and $d$ are the only generators in the second lowest Maslov grading. It follows that there are no more turning points after $e.$ But again in this case the remaining generators  lie between $a$ and $b$ on $\wa,$ {therefore the ray in the positive direction of $\wb_s$ must intersect the bigon between $a$ and $b$, a contradiction. }\\
     Case \ref{it:presuc_subcase1}:  There has to be a turning point after $c$, {since otherwise the ray in the positive direction of $\wb_s$ must intersect the bigon between $a$ and $b$.} Moreover this turning point has to be a negative turning point as else we would have the the incorrect number of generators of the Maslov grading of the successor. Let $f$ be the successor of this turning point, {and  $g$ be the successor of $f$. Since $b$ and $f$ are the only generators in the second lowest Maslov grading,  $f$ must be a non-trivial term in $\partial g$, and either $g$ is the final generator, or $g$ is a positive turning point and there are no more turning points after. In either case, we observe that the ray in the positive direction of $\wb_s$ must intersect the bigon between $a$ and $b$, a contradiction.  See Figure \ref{fig: maslovdiagram}\subref{subfig: maslovdiagram_2} for a schematic picture of the Maslov gradings of the intersection points in this case.} 
\end{itemize}
\end{proof} 
\begin{figure}
\begin{minipage}{.5\linewidth}
     \centering
      \subfloat[{Case \ref{it:presuc_subcase3} in the proof of Lemma \ref{le:presec}.}]{
       \begin{tikzpicture}[scale=0.9]
       \begin{scope}[thin, black!0!white]
	  \draw  (-3, 0) -- (6,0);
      \end{scope}

      	\filldraw (0, 0) circle (2pt) node[] (a){};
  	\filldraw (0.3, 1) circle (2pt) node[] (b){};
     	\filldraw (0.6, 2) circle (2pt) node[] (c){};
      	\filldraw (0.9, 1) circle (2pt) node[] (d){};
            	\filldraw (1.2, 2) circle (2pt) node[] (e){};

            \filldraw (1.5, 3) circle (2pt) node[] (f){};
            \filldraw (1.8, 2) circle (2pt) node[] (g){};
                 \filldraw (2.1, 3) circle (2pt) node[] (h){};
             
     \draw [ ->]  (a) -- (b)  ;
     \draw [ ->]   (b) -- (c) ;
      \draw [ ->]   (c)  -- (d) ;
       \draw [ ->]   (d)  -- (e) ;
        \draw [ ->]  (e)  -- (f) ;
         \draw [ ->]   (f)  -- (g) ;
        \draw [ ->]   (g)  -- (h);

      	 	\node  [left] at (a) {$a$};
         \node  [left] at (b) {$b$};
         \node  [left] at (c) {$c$};
         \node  [right] at (d) {$d$};
                \node  [left] at (e) {$e$};
         
    \end{tikzpicture}  \label{subfig: maslovdiagram_1}
     }
\end{minipage}%
\begin{minipage}{.5\linewidth}
     \centering
      \subfloat[{Case \ref{it:presuc_subcase1} in the proof of Lemma \ref{le:presec}.}]{
       \begin{tikzpicture}[scale=0.9]
       \begin{scope}[thin, black!0!white]
	  \draw  (-3, 0) -- (6,0);
      \end{scope}

      	\filldraw (0, 0) circle (2pt) node[] (a){};
  	\filldraw (0.3, 1) circle (2pt) node[] (b){};
     	\filldraw (0.6, 2) circle (2pt) node[] (c){};
      	\filldraw (0.9, 3) circle (2pt) node[] (d){};
            	\filldraw (1.2, 2) circle (2pt) node[] (e){};

            \filldraw (1.5, 3) circle (2pt) node[] (f){};
            \filldraw (1.8, 2) circle (2pt) node[] (g){};
                 \filldraw (2.1, 1) circle (2pt) node[] (h){};
                 
                    \filldraw (2.4, 2) circle (2pt) node[] (i){};
                         \filldraw (2.7, 3) circle (2pt) node[] (j){};
                                                  \filldraw (3, 2) circle (2pt) node[] (k){};
             
     \draw [ ->]  (a) -- (b)  ;
     \draw [ ->]   (b) -- (c) ;
      \draw [ ->]   (c)  -- (d) ;
       \draw [ ->]   (d)  -- (e) ;
        \draw [ ->]  (e)  -- (f) ;
         \draw [ ->]   (f)  -- (g) ;
        \draw [ ->]   (g)  -- (h);
          \draw [ ->]   (h)  -- (i);
            \draw [ ->]   (i)  -- (j);
              \draw [ ->]   (j)  -- (k);

      	 	\node  [left] at (a) {$a$};
         \node  [left] at (b) {$b$};
         \node  [left] at (c) {$c$};

          \node  [right] at (h) {$f$};
         
    \end{tikzpicture} \label{subfig: maslovdiagram_2}
     }
\end{minipage}
    \caption{The height indicates the Maslov grading, and the arrow indicates the direction of $\wb_s.$ Note that a point is a turning point if and only if it is between two consecutive arrows of the same direction, and it is a positive (resp.~negative) turning point if both arrows point upwards (resp.~downwards). }
    \label{fig: maslovdiagram}
\end{figure}
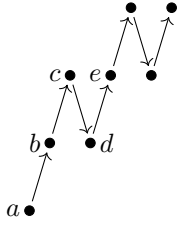
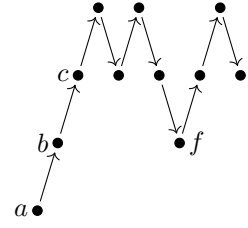

\begin{proposition} \label{prop:neccesary}
    If a standard diagram represents an almost L-space knot, then it is virtually almost coherent.
\end{proposition}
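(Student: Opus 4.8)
The plan is to analyze the structure of $\wb_s$ for the exceptional $\spin^c$ structure $s$ using the combinatorial data established in Lemmas~\ref{le:threearrows}, \ref{le:maslov_shift}, and \ref{le:presec}, together with the truncation operation of Definition~\ref{def:trunc}. First I would observe that by Lemma~\ref{le:presec} the generator with the extremal (highest or lowest) Maslov grading has both a predecessor and a successor along $\wb_s$; combined with Lemma~\ref{le:threearrows} this severely constrains how many turning points $\wb_s$ can have. Indeed, a turning point is precisely a generator sitting between two consecutive arrows of the same direction, and each turning point forces a local ``reversal'' in the Maslov grading sequence traced out along $\wb_s$ (Lemma~\ref{le:maslov_shift}). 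Since the generators occupy only $3$ or $4$ consecutive Maslov gradings with multiplicity profiles as in Lemma~\ref{le:threearrows}, I would argue that $\wb_s$ has exactly two turning points (one positive and one negative), and that all the intersection points other than a bounded window around these turning points can be truncated, leaving a $\wb'_s$ with no turning points — hence graphic.

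The key steps, in order, are: (1) use Lemma~\ref{le:presec} plus the profiles of Lemma~\ref{le:threearrows} to rule out $0$ or $1$ turning points and to show there are exactly two, of opposite sign; (2) locate the ``box'' part of the complex (the length-one box summand guaranteed by Theorem~\ref{thm:almostlspace}) as the portion of $\wb_s$ trapped between the two turning points, and identify the remaining two long pieces (the ends running to infinity, which carry the staircase summand) as truncatable in the sense of Definition~\ref{def:trunc}; (3) perform the truncation to obtain $\wb'_s$ with no turning point, so that (as remarked at the end of Subsection~\ref{subsec: 11}) $\wb'_s$ is graphic and the truncated diagram is coherent; (4) conclude that in $\wb_s$ itself all arcs are consistently oriented except for the two short arcs adjacent to (or forming the boundary of) the box region, i.e.\ $\bigcup_{s}\tilde\beta_s$ contains exactly two inconsistent arcs. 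Here I would also invoke that in every non-exceptional $\spin^c$ structure the complex is a staircase, so by Lemma~\ref{le:lspace_noturning} those $\wb_{s'}$ are graphic and contribute no inconsistent arcs; thus the only inconsistent arcs in the whole universal-cover picture come from $\wb_s$, and there are exactly two of them — one in each half-plane, as noted after Definition~\ref{def: vir_almost}. This gives virtual almost coherence.

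The main obstacle I anticipate is step (2): carefully matching the \emph{geometric} window between the two turning points of $\wb_s$ with the \emph{algebraic} box summand, and verifying that the two complementary pieces genuinely satisfy the sign condition $(\wa\cdot\wb_s)|_a \cdot (\wa\cdot\wb_s)|_d > 0$ required to truncate them in Definition~\ref{def:trunc}. This requires a case analysis over the multiplicity profiles \ref{it:gen_num_1}--\ref{it:gen_num_4} of Lemma~\ref{le:threearrows} and over the two signs of turning point, essentially the same bookkeeping that appears in the proof of Lemma~\ref{le:presec} (the diagrams in Figure~\ref{fig: maslovdiagram} are the relevant local models). A secondary subtlety is that the orientations of the two inconsistent arcs must be shown to agree with each other (they are ``in the minority'' in a consistent way across the two half-planes), which follows because truncating a positive turning point and truncating a negative turning point leave, respectively, a positive-graphic and a negative-graphic tail — and consistency of $\wb_s$ away from the box forces these tails to have the same handedness, pinning down the inconsistent direction uniquely. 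Once the two-turning-point structure is nailed down, the rest is essentially formal.
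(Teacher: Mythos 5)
Your overall skeleton — work in the exceptional $\spin^c$ structure, use Lemma~\ref{le:presec} to find predecessor and successor of the extremal-grading generator, truncate, observe the tails are graphic, and note that the non-exceptional $\spin^c$ structures carry staircases and hence graphic chains of a uniform sign — is the same as the paper's. But your step (1) contains a genuine gap: it is not true that the lemmas force \emph{exactly two} turning points on $\wb_s$. Your argument is that only $3$ or $4$ consecutive Maslov gradings are available, but a descent through $4$ consecutive gradings ($m+3,m+2,m+1,m$) already produces \emph{two} negative turning points, and the subsequent ascent two positive ones. Concretely, in the profiles \ref{it:gen_num_3} and \ref{it:gen_num_4} of Lemma~\ref{le:threearrows} the two generators in the grading adjacent to the extremal one must be the predecessor $b$ and successor $c$ of the extremal generator $a$, and then the predecessor of $b$ and the successor of $c$ (when they exist) are forced by the grading bookkeeping to be turning points as well — so up to four turning points can occur. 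This is exactly the situation the paper allows: its proof only shows that all turning points lie in $\{b,c,\text{predecessor of }b,\text{successor of }c\}$, and Figure~\ref{fig: truncating} explicitly depicts a Maslov diagram with four turning points in the $(*,*,2,1)$ case. Since your identification of the box as ``the portion between the two turning points'' and your final count of inconsistent arcs both rest on the ``exactly two'' claim, the argument as proposed does not go through in the $4$-grading profiles; you would instead need the paper's weaker localization of turning points together with an argument that the truncated portion contributes at most one arc of each direction in each half-plane (basepoint/height considerations), which your sketch does not supply.

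A second, related problem is that your steps (2)--(3) apply the truncation backwards. In Definition~\ref{def:trunc} one removes a single consecutive block of intersection points and requires that \emph{no turning points survive outside} the block and its two neighbors; only then is the result graphic. So the portion to be truncated is the short middle window between $b$ and $c$ (containing $a$ and flanked by all the turning points), after which the two staircase tails are identified into one graphic chain — not the two infinite graphic ends, as you propose. Removing the ends would leave precisely the non-graphic middle, and the sign condition $(\wa\cdot\wb_s)|_{a}\cdot(\wa\cdot\wb_s)|_{d}>0$ in Definition~\ref{def:trunc} concerns the two tail points adjacent to the removed middle block, not the tails themselves. Finally, two smaller points: the claim that the non-exceptional $\wb_{s'}$ contribute no inconsistent arcs needs the observation that all these staircases have the same sign (determined by whether the knot is a positive or negative almost L-space knot), not just that each is graphic; and to get \emph{exactly} two inconsistent arcs rather than at most two, you should invoke that an almost L-space knot is by definition not an L-space knot, so the diagram cannot be coherent by Theorem~\ref{thm: lspace}.
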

\begin{proof}
 Let $s$ be the exceptional spin$^c$ structure. Up to reflecting $\R^2$, Lemma~\ref{le:threearrows} implies that the number of generators in $\mathfrak{T}(\hH,s)$ in each Maslov grading is either $(*,*,1)$ or $(*,*,2,1)$ for some pair of integers $m,n$. {In each case we will prove that there are exactly two inconsistent arcs in $\wb_s$. Since the sign of the staircases only depend on whether the almost L-space knot is positive or negative, and therefore are uniform across all spin$^c$ structures, this will prove the proposition.}\\
 In both cases, let $a$ be the unique generator in the lowest Maslov grading. According to Lemma \ref{le:presec}, $a$ has a predecessor $b$ and a successor $c$. {Recall that a turning point, its predecessor and successor together occupy three consecutive Maslov gradings. In the case $(*,*,1)$, since $a$ is the unique generator in its Maslov grading, there can be no turning point in $\wa \cap \wb_s \setminus \{a,b,c\}$. 
 Denote by $\gamma_{pp'}$ the arc from intersection point $p$ to $p'$. It follows that
 all the arcs outside of $\gamma_{ba}$ and $\gamma_{ac}$  are in the same direction. Moreover, they must all be in the opposite direction as $\gamma_{ba}$ and $\gamma_{ac}$, because otherwise $\wb_s$ would be graphic, contradicting the fact that it represents an almost staircase. We conclude that there are exactly two inconsistent arcs in $\wb_s$ as claimed.
 \\
 In the case $(*,*,2,1)$, $b$ and $c$ are the only generators in their Maslov grading, therefore they admit both an incoming and an outgoing arrow to adjacent generators, and so they are both turning points. Let $d$ be the predecessor of $b$ and $e$ be the successor of $c$.  Since $b$ and $c$ are the only generators in their Maslov grading, there is no turning point in $\wa \cap \wb_s \setminus \{a,b,c,d, e\}$.  
 It follows that all the other arcs must be all in the same direction as $\gamma_{ba}$ and $\gamma_{ac}$, or as $\gamma_{ce}$ and $\gamma_{db}$.  
 Thus there are exactly two inconsistent arcs in $\wb_s$ as claimed.
 } 
\end{proof}

We now prove the converse to Proposition~\ref{prop:neccesary}. As claimed earlier, we will prove a stronger version of the converse: that all $(1,1)$ almost L-space knots have the $\CFKi$-type of complexes consisting of a staircase and a box of length $1.$ 
Before the proof, we rule out a corner case. The following lemma rules out the possibility that for a virtually almost coherent $(1,1)$ diagram, $\CFKi(Y,K,s)$ contains a single staircase (with varying signs) in each $s\in \Spin^c(Y).$ It turns out that Definition \ref{def: vir_almost} implies that in the exceptional spin$^c$ structure,  $\wb_s$ contains at least two arcs in each half plane,  one of them being the inconsistent arc.
\begin{figure}
\begin{minipage}{.5\linewidth}
     \centering
      \subfloat[]{
       \begin{tikzpicture}[scale=0.9]
       \begin{scope}[thin, red!50!white]
	  \draw  (-1, 0) -- (3,0);
   \draw  (-1, 2) -- (3,2);
   \draw  (-1, 3) -- (3,3);
      \end{scope}
    
   \foreach \i in {0,...,2}
   {
      	\filldraw (\i, 0) circle (2pt) node[] {};
       }

          \foreach \i in {0.5,...,2.5}
   {
      	\filldraw (\i, 3) circle (2pt) node[] {};
       }
       \draw [blue](0,0) -- (-0.2,-0.5);
         \draw[new arc arrow=to pos 0.7 with length 2mm, blue] (0,0) arc (180:0:0.5);
          \draw[new arc arrow=to pos 0.7 with length 2mm, blue] (1,0) arc (-180:0:0.5);
            \draw [blue](2,0) -- (2.5,3);
                  \draw[new arc arrow=to pos 0.7 with length 2mm, blue] (2.5,3) arc (0:180:0.5);
                        \draw[new arc arrow=to pos 0.7 with length 2mm, blue] (1.5,3) arc (0:-180:0.5);

        		\node  [above] at (2.2,0) {$3$};        	
         	\node  [above] at (1.2,0) {$2$};
          \node  [above] at (-0.2,0) {$1$};
           \node  [above] at (2.7,3) {$4$};
            \node  [above] at (1.3,3) {$5$};
             \node  [above] at (0.3,3) {$6$};
       \node  [above] at (3,0) {$\wa$};
              \node  [left] at (2.3,1.4) {$\wb_s$};
            \node  [above] at (-0.51,3) {$\wa'$};
    \end{tikzpicture}\label{subfig: cornercase_1}
     }
\end{minipage}%
\begin{minipage}{.5\linewidth}
     \centering
      \subfloat[]{
       \begin{tikzpicture}[scale=0.9]
       \begin{scope}[thin, red!50!white]
	  \draw  (-2, 0) -- (4,0);
   \draw  (-2, 3) -- (4,3);
       \end{scope}
   \begin{scope}[thin, black!50!white]
    \draw  (-2, 3) -- (-2,0);
     \draw  (4, 3) -- (4,0);    
      \end{scope}

      	\filldraw (-1, 0) circle (2pt) node[] {};
       \filldraw (1, 0) circle (2pt) node[] {};
       \filldraw (2.5, 0) circle (2pt) node[] {};

     \filldraw [black!60!white] (-0.8, 3) circle (2pt) node[] {};
      \filldraw [black!60!white] (3.3, 3) circle (2pt) node[] {};
       \filldraw (0.3, 3) circle (2pt) node[] {};
       \filldraw (2.3, 3) circle (2pt) node[] {};
       
         \draw[new arc arrow=to pos 0.7 with length 2mm, blue] (-1,0) arc (180:0:1);
          \draw[new arc arrow=to pos 0.7 with length 2mm, blue] (0.3, 3) arc (-180:0:1);

 \node  [above] at (-1, 3) {$1$};
 \node  [above] at (3.5, 3) {$1$};
\node  [above] at (0.5, 3) {$2$};
        		\node  [above] at (2.5,3) {$3$};    
\node  [above] at (-0.8, 0) {$1$};
\node  [above] at (1.2, 0) {$2$};
        		\node  [above] at (2.7,0) {$3$};        	
    
    \end{tikzpicture}\label{subfig: cornercase_2}
     }
\end{minipage}
\caption{The {curve in the} universal cover depicted in Figure \protect\subref{subfig: cornercase_1} cannot be a lift of {a $\beta$ curve in a} reduced $(1,1)$ diagram. Figure \protect\subref{subfig: cornercase_2} is a schematic picture of the standard $(1,1)$ diagram that corresponds to Figure \protect\subref{subfig: cornercase_1}, with the possible locations of the intersection point $1$ depicted. The intersection points $4$ and $5$ are next to $1$ and $2$ on the bottom side.}
\label{fig: cornercase}
\end{figure}
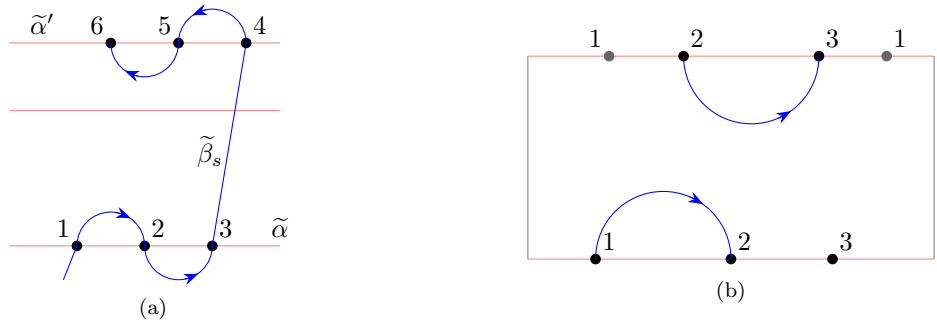
\begin{lemma}\label{le: cornercase}
    { Suppose $(S^1\times S^1,\alpha,\beta)$ is a virtually almost coherent $(1,1)$ diagram. If $\wb_s$ is the lift of $\beta$ that }contains the inconsistent arcs then $\wb_s$ contains at least two arcs in each half plane.
\end{lemma}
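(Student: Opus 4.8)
The plan is to combine the forced alternation of $\tilde\beta_s$ across $\tilde\alpha$ with an embeddedness argument, reducing the lemma to a single exceptional configuration that is then eliminated by inspecting the standard diagram.

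First I would record the coarse structure of $\tilde\beta_s$. Since $\tilde\beta_s$ is transverse to $\tilde\alpha=\{y=0\}$ and meets it in $2n_s+1$ points, the $2n_s$ bounded arcs of $\tilde\beta_s\setminus\tilde\alpha$ lie alternately in the two half planes, and the two rays lie one in each half plane; hence $\tilde\beta_s$ has exactly $n_s$ bounded arcs in each half plane, and the lemma is the assertion $n_s\ge 2$. If $n_s=0$ then $\tilde\beta_s$ has no bounded arc, so it contains neither inconsistent arc, contrary to hypothesis; so $n_s\ge 1$ and it remains to exclude $n_s=1$. Suppose $n_s=1$. Then the unique bounded arc in each half plane is forced to be one of the two inconsistent arcs, so $\tilde\beta_s$ consists of an upper arc $\delta$ and a lower arc $\gamma$, which are the two inconsistent arcs; since $\delta$ and $\gamma$ are rainbow arcs on opposite sides of $\tilde\alpha$, they are rainbow arcs around the two different basepoints, one around each.

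Consecutive arcs along $\tilde\beta_s$ lie on opposite sides, so $\delta$ and $\gamma$ are consecutive and share a vertex $c_2\in\tilde\alpha$; writing $c_1,c_3\in\tilde\alpha$ for their other endpoints, $\tilde\beta_s$ reads $(\text{ray})\,c_1\,\delta\,c_2\,\gamma\,c_3\,(\text{ray})$ with the ray past $c_1$ below $\tilde\alpha$ and the ray past $c_3$ above it. I would now split into cases by the left-to-right order of $\{c_1,c_2,c_3\}$ on $\tilde\alpha$. In each of the four orders in which $c_3$ lies between the feet of $\delta$, or $c_1$ lies between the feet of $\gamma$, the corresponding ray leaves $c_3$ (resp.\ $c_1$) into the bounded bigon cut off by $\delta$ (resp.\ $\gamma$) together with $\tilde\alpha$, and cannot escape that bigon without recrossing $\tilde\alpha$ or $\tilde\beta_s$, contradicting embeddedness of $\tilde\beta_s$. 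Only the two monotonic orders $c_1<c_2<c_3$ and $c_3<c_2<c_1$ survive, and in these $\tilde\beta_s$ is graphic.

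It remains to kill the monotonic case, where $\tilde\beta_s$ is graphic yet carries both inconsistent arcs (so every other $\tilde\beta_{s'}$ has only consistent arcs and is graphic, forcing $\tilde\beta_s$ to be graphic in the direction opposite to all of them). Here I would pass to the standard diagram: the projections of $\delta$ and $\gamma$ are a rainbow arc around one basepoint and one around the other, meeting at the single point that is the image of $c_2$, and $\beta$ leaves this local picture along the images of the two rays, re-entering a neighbouring copy of $\tilde\alpha$ as in Figure~\ref{subfig: cornercase_1}. Using that each bigon of a reduced diagram contains a basepoint, that $q\ge 2$ (so at least one further rainbow arc is nested around each basepoint — recall a diagram with a single rainbow arc is coherent), and that the two copies of $\alpha$ in the standard diagram are glued by the $s$-shift, I would show, along the lines of the schematic in Figure~\ref{fig: cornercase}, that the images of $c_1,c_2,c_3$ together with the extra rainbow arcs cannot be placed consistently: the $s$-shift identification orders them incompatibly with the nesting, so the resulting $\beta$ is either non-embedded or acquires a third inconsistent arc, contradicting virtual almost coherence. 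This last step — turning ``the configuration of Figure~\ref{subfig: cornercase_1} does not extend to a reduced standard diagram'' into a rigorous finite check on the cyclic order of the intersection points and their $s$-shifts — is the main obstacle; the reductions preceding it are routine.
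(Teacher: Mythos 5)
Your reductions (the count of $n_s$ arcs per half plane, the exclusion of $n_s=0$, the identification of the two arcs with the inconsistent rainbow-arc lifts via Lemma~\ref{le: onebasepoint}, and the trapped-ray elimination of the four non-monotone orders of $c_1,c_2,c_3$) are all correct, and in fact they supply a detail the paper passes over with a ``without loss of generality.'' But the proof has a genuine gap exactly where you flag it: the monotone (graphic) configuration is the entire content of the lemma, and you only announce a plan for a ``finite check'' without carrying it out or identifying the mechanism that makes it work. The paper's argument for this step is concrete: since all arcs of $\wb_s$ along $\wa$ are inconsistent, every other chain is graphic in the opposite (consistent) direction, so one may take $\wa'$ to be the first lift of $\alpha$ past $\wa$ on which $\wb_s$ traces such a chain, and label the next intersection points $4,5$ along $\wb_s$, which are the endpoints of a rainbow arc (Figure~\ref{fig: cornercase}\subref{subfig: cornercase_1}). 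Projecting to the standard diagram, the top and bottom edges are identified by a shift, which preserves the left-to-right cyclic order of the labelled points; the first three points have cyclic order $(1,2,3)$ on both edges, and since $4,5$ cobound a rainbow arc the only cyclic orders possible on the bottom edge are $(1,5,4,2,3)$ and $(5,1,2,4,3)$. Each of these is then ruled out on the top edge because it forces a point that is not a rainbow-arc endpoint to sit between the two endpoints of a rainbow arc (or vice versa). That cyclic-order incompatibility under the shift identification is the contradiction, and nothing in your sketch pins it down.

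Two of the ingredients you propose for your version of the check are also not secure. The claim $q\ge 2$ does not follow from virtual almost coherence as stated: the majority (consistent) arcs in a half plane need not be lifts of rainbow arcs --- only the inconsistent arcs are guaranteed to have height less than $1$ by Lemma~\ref{le: onebasepoint} --- so ``at least one further rainbow arc nested around each basepoint'' needs an argument you have not given (and the paper's proof does not use it; it only needs the existence of one graphic chain of opposite direction, which follows from the minority condition defining inconsistency). Likewise, the anticipated contradiction ``$\beta$ is either non-embedded or acquires a third inconsistent arc'' is not what actually goes wrong in the surviving configuration; the obstruction is combinatorial, namely that no placement of the images of the points and their $s$-shifts respects the cyclic order on both edges together with the rainbow-endpoint pattern. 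As written, the proposal establishes the routine reductions but not the lemma.
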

\begin{proof}
    Assume towards the contradiction that $\wa$ cuts $\wb_s$ into one arc and one ray in each half plane.  Without loss of generality, we may assume $\wb_s$ starts from the lower half plane and the graphic curves goes from left to right. According to the definition, $\wb_s$ {is graphic with respect to some other lift of $\alpha$} (with the opposite direction). Let $\wa'$ be the first such lift after $\wa$. {Label the intersection points $\wb_s\cap(\wa\cup\wa')$ by positive integers according to the order in which \change{they} appear on 
    \change{$\wb_s$} as an oriented curve}.    See Figure \ref{fig: cornercase} \subref{subfig: cornercase_1}.
    
    Project the intersection points to the standard $(1,1)$ diagram. Note that since the top and bottom sides of the standard $(1,1)$ diagram are identified by a shift, the cyclic order of the numbering (say) from left to right is invariant on each side. The cyclic order of the first three intersection points is $(1,2,3)$ on both top and bottom side. Now consider $4$ and $5$ on the bottom side. Since they are end points of a rainbow arc, the possible cyclic orders are $(1,5,4,2,3)$ or $(5,1,2,4,3)$. However,   $(1,5,4,2,3)$ cannot be a cyclic order on the top side, since $5$ is between $1$ and $4$, but $5$ is an end point of a rainbow arc {on the top side while $1$ and $4$ are not}. Similarly $(5,1,2,4,3)$ also cannot be a cyclic order on the top side, since $4$ is between $2$ and $3$, which are two end points of a rainbow arc while $4$ is not an end point of any rainbow arc. We have arrived at a contradiction.
\end{proof}
\begin{figure}[htb!]
     \centering
       \begin{tikzpicture}[scale=0.9]
       \begin{scope}[thin, red!50!white]
	  \draw  (-1, 0) -- (6,0);
      \end{scope}    
       \draw[arc arrow=to pos 0.7 with length 2mm, blue] (0,0) arc (180:0:1);     
           \draw[arc arrow=to pos 0.7 with length 2mm, blue] (2,0) arc (-180:0:1);   
           \node  [above] at (-0.3,0) {$c$};
          \node  [above] at (2.2,0) {$b$};
          \node  [above] at (4.3,0) {$a$};
              \node  [above] at (1,0) {$w$};
          \node  [below] at (3,0) {$z$};
    \end{tikzpicture}
\caption{}\label{fig: for_graphic_lemma}
\end{figure}
{We also need the following lemma:
\begin{lemma}\label{le:for_proof_sufficient}
Suppose a portion of $\wb_s$ is graphic and passes through intersection points $a,b$ and $c$ in order.
Further assume that before passing $a$, $\wb_s$ intersects $\wa$ only in $(-\infty,a) \subset \wa$ and after passing $c$, $\wb_s$ intersects $\wa$ only in $(c,\infty) \subset \wa$. Then viewed as a generator in $\CFKi(Y,K,s),$ $b$ admits exactly $2$ arrows, either both incoming arrows, from $a$ and $c$ respectively, or both outgoing arrows, to $a$ and $c$ respectively.
\end{lemma}}
{\begin{proof}
    Taking the reflection along $\wa$ if needed, the situation is depicted in Figure \ref{fig: for_graphic_lemma}. Suppose that there is a component of $\partial$ from $b$ to $p$ for some $p \in \wa \cap \wb_s.$ 
    Since there are no other intersection points in the intervals $(a,b)$ or $(b,c) $ on $\wa$, the boundary of the bigon from $b$ to $p$ must contain  the entirety of  $(a,b)$ or $(b,c)$ on $\wa$. Suppose the former case, then $p \in (-\infty,a]$ on $\wa$, and the boundary of the bigon from $b$ to $p$ must contain also the entire portion of $\wb_s$ from $b$ to $p$. Since $\wb_s$ intersects $\wa$ at $a$, the only such embedded bigon is the bigon from $b$ to $a$, and $p=a.$ Similarly, in the other case we have $p=b.$ Suppose there is a component of the differential from $p$ to $b$ for some  $p \in \wa \cap \wb_s.$ By the same argument above, the boundary of the bigon from $p$ to $b$ must contain the entirety of the boundary of either  the bigon between $a$ and $b$ or the bigon  between $b$ and $c$. There is no such embedded bigon. We conclude that $b$ admits exactly $2$ arrows, both of which outgoing, to $a$ and $c$ respectively.
    Since reflecting along $\wa$ reverses the direction of all arrows, the claim follows. 
\end{proof}}

\begin{proposition} \label{prop:sufficient}
    Virtually almost coherent diagrams represent almost L-space knots. Moreover, for such knots, the knot Floer complex in the exceptional spin$^c$ structure consists of a staircase direct summed with a box with length $1$. 
\end{proposition}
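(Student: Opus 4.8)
The plan is to prove the two implications of Theorem~\ref{thm:(1,1)virtualboxcoherent} in the direction at hand by analyzing $\wb_s$ separately over the ``exceptional'' and ``non‑exceptional'' $\spin^c$ structures, using the truncation operation of Definition~\ref{def:trunc} as the main tool. First I would locate the exceptional structure: since $\mathcal H$ is virtually almost coherent, $\bigcup_s\wb_s$ contains exactly two inconsistent arcs, one in each half plane, and by Lemma~\ref{le: onebasepoint} each is a lift of a rainbow arc whose bigon with $\wa$ contains a single basepoint. In either case of Proposition~\ref{pro: equiv_def} the two inconsistent rainbow arcs are joined along $\beta$ by a (possibly empty) chain of consistent rainbow arcs, so their lifts belong to $\wb_{s_0}$ for one fixed $\spin^c$ structure $s_0$. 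For every $s\neq s_0$, all arcs of $\wb_s$ are consistent; since the consistent directions on the two half planes are mutually compatible across $\spin^c$ structures — exactly the feature that makes the converse of Lemma~\ref{le:lspace_noturning} work in \cite{GLV} — the curve $\wb_s$ has no turning point, so by \cite[Proposition 2.6]{GLV} the complex $\CFKi(Y,K,s)$ is a staircase, of a single fixed sign.

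The heart of the argument is the structure of $\wb_{s_0}$. By Lemma~\ref{le: cornercase} it has at least two arcs in each half plane, and by Lemma~\ref{le: onebasepoint} its two inconsistent arcs are short (height $<1$). Reading along $\wb_{s_0}$, the two inconsistent arcs together with the consistent arc(s) joining them occupy a segment whose two flanking intersection points are turning points — a local maximum and a local minimum of the $\wa$-numbering, forced by the orientation reversal that defines inconsistency — while every arc outside this segment is consistent. I would apply Definition~\ref{def:trunc} to truncate precisely this segment: the required sign condition holds because the two flanking points are an even number of arcs apart along $\wb_{s_0}$, and there are no turning points outside the segment. The truncated curve $\wb'_{s_0}$ is graphic, so by \cite[Proposition 2.6]{GLV} it corresponds to a staircase $\mathfrak S$; and the deleted segment, after the identification of the two flanking points prescribed by the truncation, becomes a four‑cycle of generators whose induced differentials — computed from the turning‑point analysis (Lemma~\ref{le:maslov_shift}) together with the two local models of Definition~\ref{def: strongly_almost_coherent} — are exactly those of a box of length $1$. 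A final filtered change of basis, adding one of the four box generators homogeneously (in Maslov and Alexander grading) to its neighbour in $\mathfrak S$, splits $\CFKi(Y,K,s_0)$ as $\mathfrak S\oplus(\text{box of length }1)$.

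The step I expect to be the main obstacle is this last analysis of $\wb_{s_0}$: one must pin down the orientation of every arc relative to the two consistent directions before the four differentials of the cycle are determined, verify the box pattern (and not its dual or some degenerate collapse) arises in each of the two cases of Definition~\ref{def: strongly_almost_coherent}, and track the $(i,j)$‑coordinates of the four generators — this is where Lemma~\ref{le: onebasepoint} reenters — so that the box lands in its central position. The subtle point is that a box of length $1$ is a four‑cycle, which cannot appear as a connected sub‑chain of the arc $\wb_{s_0}$, so the direct‑sum splitting genuinely requires the change of basis rather than being visible in the original diagrammatic basis.

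Finally I would conclude. With $\CFKi(Y,K,s)$ a staircase for $s\neq s_0$ and a staircase plus a centrally positioned length‑$1$ box for $s=s_0$, Ozsv\'ath--Szab\'o's large surgery formula — valid over rational homology spheres, as in the proof of Theorem~\ref{thm:almostlspace} — shows that for $|n|\geq 2g(K)-1$ large surgery on $K$ has $\widehat{\HF}$ of rank $1$ in every $\spin^c$ structure except one, where it has rank $3$. Hence the surgered manifold is an almost L‑space (its $\widehat{\HF}$ has rank two more than the order of its first homology), so $K$ is not an L‑space knot — in particular not a trefoil — and is therefore an almost L‑space knot. This also establishes the ``moreover'' clause, completing the proof of Theorem~\ref{thm:(1,1)virtualboxcoherent} and, via Proposition~\ref{pro: equiv_def}, of Theorem~\ref{thm: main}.
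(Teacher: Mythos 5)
Your proposal follows essentially the same route as the paper: both locate the two inconsistent arcs on a single lift $\wb_{s_0}$, truncate the segment of $\wb_{s_0}$ containing them so that the graphic remainder is a staircase by \cite[Proposition 2.6]{GLV}, identify the removed portion (with the two flanking points identified) as a box of length one, treat the other $\spin^c$ structures as coherent staircases, and conclude via the large surgery formula. The verification you defer as the ``main obstacle'' is exactly where the paper's work lies --- the case analysis according to whether the inconsistent arcs share an endpoint and where the neighbours $d,e$ sit relative to the interval $[b,c]$ --- and carrying it out corrects two small points in your sketch: the flanking points of the truncated segment are not themselves turning points (the turning points lie in the interior of the segment), and in the shared-endpoint subcases the box basis is not supported on the deleted segment alone but requires mixing in generators from the graphic part, e.g.\ $b+g$ and $c+f$.
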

\begin{proof}
Let $\gamma$ and $\eta$ be the two inconsistent arcs in the lower and upper planes, respectively. There are two cases depending on whether  $\gamma$ and $\eta$ share a common end point or not.

First consider the case in which $\gamma$ and $\eta$ have no common end point. Let $\partial \gamma = \{b, d\}$ and $\partial \eta = \{c, e \}.$ Observe that one of the end points of $\gamma$ is between $c$ and $e$; let it be $b.$ Without loss of generality, we may assume that $d$ is to the right of $b$ and $c$ is to the right of $e.$ This is depicted in Figure \ref{fig: noshare} \subref{subfig: noshare_1}. Note that the remainder of the diagram is graphic to the right of $e$ and to the left of $d$. {Let $f$ \change{denote the} other intersection point adjacent to $d$. Then $f$}
must be outside the interval $[e,d]$ on $\wa$, because otherwise there will be at least $2$ basepoints in the bigon formed by $\gamma$ and $\wa.$ {Similarly $g$ must \change{be} located outside the interval $[e,d]$ on $\wa$, where $g$ is the other intersection point adjacent to $e$.}   Thus the differentials in the diagram satisfy
\begin{align*}
    \partial a &= b +c \\
    \partial b = \partial c &= e + d\\
    {\partial d = \partial e} & {= f + g}
\end{align*}
 {Since the rest of the curve is graphic, by Lemma \ref{le:for_proof_sufficient}, the generators $a,b,c,d$ and $e$ admit no other arrows than those depicted in Figure \ref{fig: noshare} \subref{subfig: noshare_2}. It follows that if we perform the filtered change of basis given by
 $\{d,e\}\mapsto \{d+e,e\}$, keeping everything else unchanged, this splits off a length one box summand, generated by $\{a,b,c,d+e\}.$ By Lemma \ref{le:graphic_implies_staircase} the rest of the generators $\{\cdots,e,f,g\}$ form a staircase.
 }
 Therefore $\CFKi(Y,K,s)$ is the direct sum of a staircase and a box with length $1$ as required.
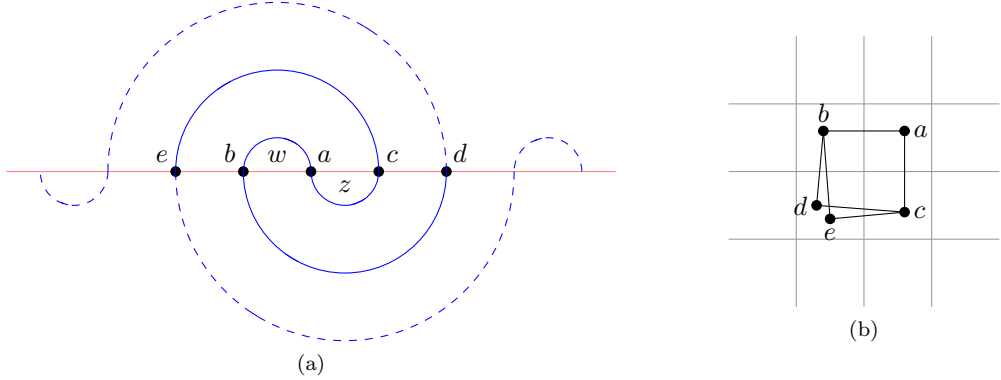
\begin{figure}
\begin{minipage}{.5\linewidth}
     \centering
      \subfloat[]{
       \begin{tikzpicture}[scale=0.9]
       \begin{scope}[thin, red!50!white]
	  \draw  (-2.5, 0) -- (6.5,0);
      \end{scope}
    
   \foreach \i in {-1,...,5}
   {
      	\filldraw (\i, 0) circle (2pt) node[] {};
       }
         \draw[arc arrow=to pos 0.7 with length 2mm,dashed, blue] (-1,0) arc (0:-180:0.5);
               \draw[arc arrow=to pos 0.7 with length 2mm,dashed, blue] (6,0) arc (0:180:0.5);
        \draw[arc arrow=to pos 0.7 with length 2mm,dashed, blue] (-1,0) arc (180:0:2.5);
     \draw[arc arrow=to pos 0.7 with length 2mm,dashed, blue] (5,0) arc (0:-180:2.5);
     
       \draw[arc arrow=to pos 0.7 with length 2mm, blue] (1,0) arc (180:0:0.5);
         \draw[arc arrow=to pos 0.7 with length 2mm, blue] (2,0) arc (-180:0:0.5);
           \draw[arc arrow=to pos 0.7 with length 2mm, blue] (3,0) arc (0:180:1.5);
           \draw[arc arrow=to pos 0.7 with length 2mm, blue] (4,0) arc (0:-180:1.5);

        		\node  [above] at (2.2,0) {$a$};
         	\node  [above] at (3.2,0) {$c$};
         	\node  [above] at (0.8,0) {$b$};
          \node  [above] at (4.2,0) {$d$};
          \node  [above] at (-0.2,0) {$e$};
            \node  [above] at (4.8,0) {$g$};
          \node  [above] at (-1.2,0) {$f$};
              \node  [above] at (1.5,0) {$w$};
          \node  [below] at (2.5,0) {$z$};
         
    \end{tikzpicture}\label{subfig: noshare_1}
     }
\end{minipage}%
\begin{minipage}{.5\linewidth}
     \centering
      \subfloat[]{
       \begin{tikzpicture}[scale=0.9]
        \begin{scope}[thin, black!0!white]
      \end{scope}
      \begin{scope}[thin, black!40!white]  
      \foreach \i in {-1,...,1}
   {
   \draw  (-2, \i) -- (2, \i);
     \draw  (\i, -2) -- (\i, 2);
       }
      \end{scope}
      
      	\filldraw (0.6, 0.6) circle (2pt) node[] (a){};
       \filldraw (0.6, -0.6) circle (2pt) node[] (c){};
       \filldraw (-0.6, 0.6) circle (2pt) node[] (b){};
              \filldraw (-0.5, -0.7) circle (2pt) node[] (e){};
              \filldraw (-0.7, -0.5) circle (2pt) node[] (d){};
              \filldraw (-1.5, -0.7) circle (2pt) node[] (f){};
              \filldraw (-0.7, -1.5) circle (2pt) node[] (g){};

     \draw  (0.6, 0.6) -- (0.6, -0.6);
          \draw  (0.6, 0.6) -- (-0.6, 0.6);
        \draw  (0.6, -0.6) -- (-0.5, -0.7);
             \draw  (0.6, -0.6) -- (-0.7, -0.5);
                  \draw  (-0.6, 0.6) -- (-0.5, -0.7);
                       \draw  (-0.6, 0.6) -- (-0.7, -0.5);
      \draw  (-0.5, -0.7) -- (-1.5, -0.7);
                       \draw  (-0.5, -0.7) -- (-0.7, -1.5);
                         \draw  (-0.7, -0.5) -- (-1.5, -0.7);
                       \draw  (-0.7, -0.5) -- (-0.7, -1.5);
  
      	 	\node  [right] at (a) {$a$};
         	\node  [right] at (c) {$c$};
          	\node  [above] at (b) {$b$};
           	\node  [above left] at (d) {$d$};
            \node  [below right] at (e) {$e$};
         	\node  [left] at (f) {$f$};
            \node  [below] at (g) {$g$};
         
    \end{tikzpicture} \label{subfig: noshare_2}
     }
\end{minipage}
    \caption{The case when the two distinct arcs do not share a common end point.}
    \label{fig: noshare}
\end{figure}

Now suppose $\gamma$ and $\eta$ share a common end point. Let $\partial \gamma = \{a, c\}$ and $\partial \eta = \{a, b \}.$ Without loss of generality, we may assume that $c$ is to the right of $b$. We may also assume without loss of generality that $\wb_s$ is oriented such that it passes through $c,a$ and $b$ in that order; the same argument holds for the other orientation. Let $d$ be the successor of $b$, and $e$ the predecessor of $c$. There are three subcases depending on whether $d$ and $e$ are in the interval $[b,c]$ on $\wa.$
\begin{itemize}
    \item Suppose both $d$ and $e$ are in the interval $[b,c]$. Note that in this case $d$ has a successor $f$ and $e$ has a predecessor $g$, as depicted in Figure \ref{fig: share} \subref{subfig: share_1}. The remaining diagram is graphic to the right of $f$ and to the left of $g$. Moreover, suppose $f$ has a successor {$h$, or $g$ has a predecessor $i$},  then they are necessarily outside the interval $[b,c].$ In particular, this implies the differentials in the diagram satisfy
    \begin{align*}
            \partial a &= b +c + g + f \\
     \change{ \partial b}&\change{= d+i\quad \partial c = e +  {h}}\\
   \change{ \partial g}&\change{= e+i\quad \partial f =  d + {h}.}
    \end{align*}
    {By Lemma \ref{le:for_proof_sufficient}, the generators $a$ through $g$ admit no other arrows than those depicted in \ref{fig: share} \subref{subfig: share_2}. Perform the filtered change of basis $\{b,c,d,e,g,f\} \mapsto 
    \{b+g,c+f,d+e,b,d,f\},$  keeping everything else unchanged. This splits off a box summand with side length $1$ generated by $\{ a,b+g,c+f, d+e \}$.  By Lemma \ref{le:graphic_implies_staircase} the remaining generators $\{\cdots,i,b,d,f,h\}$ form a staircase summand. }
    \item Suppose both $d$ and $e$ are outside the interval $[b,c]$. This is depicted in Figure \ref{fig: share} \subref{subfig: share_3}, {where the first intersection point to the right of $d$ (resp.~to the left of $e$) is denoted  $f$ (resp.~$g$)}.  Note that the remaining diagram is graphic to the right of $d$ and to the left of $e$. The differentials in the diagram are 
\begin{align*}
    &\partial a = b +c \\
    \partial b = &\partial c = e + d\\
    {\partial f =} &  {d  \quad 
     \partial g = e.}
\end{align*}
 {By Lemma \ref{le:for_proof_sufficient}, the generators $a,b,c,d $ and $e$ admit no other arrows than those depicted in Figure \ref{fig: share} \subref{subfig: share_4}. Perform the filtered change of basis $\{b,f,d,e\} \mapsto 
    \{b,b+f,d+e,e\},$ keeping everything else unchanged. This splits off a box summand with side length $1$ generated by $\{ a,b,c, d+e \}$.  By Lemma \ref{le:graphic_implies_staircase} the remaining generators $\{\cdots,b+f,e,g\}$ form a staircase summand. }
 \item Finally suppose $d$ is in the interval $[b,c]$ while $e$ is outside. The other case will follow  in parallel. As depicted in Figure \ref{fig: share} \subref{subfig: share_5}, in this case $d$ has a successor $f$.
  The differentials in the diagram are
  \begin{align*}
      \partial a &= b + f +c \\
      \partial b &= d + e \quad
      \partial c =  {e + h}\\
      \partial f &=  {d + h} \quad
       {\partial g = e.}
  \end{align*}
  {By Lemma \ref{le:for_proof_sufficient}, the generators $a$ through $f$ admit no other arrows than those depicted in Figure \ref{fig: share} \subref{subfig: share_6}. Perform the filtered change of basis $\{c,d,e,f\} \mapsto 
    \{c+f,d+e,c,e\}$, keeping everything else unchanged. This splits off a box summand with length $1$ generated by $\{ a,b,c+f, d+e \}$.  By Lemma \ref{le:graphic_implies_staircase} the remaining generators $\{\cdots,g,e,c,h\}$ form a staircase summand. }
\end{itemize}

\begin{figure}
\begin{minipage}{.5\linewidth}
     \centering
      \subfloat[]{
       \begin{tikzpicture}[scale=0.9]
       \begin{scope}[thin, red!50!white]
	  \draw  (-2, 0) -- (8,0);
      \end{scope}
    
   \foreach \i in {-1,...,7}
   {
      	\filldraw (\i, 0) circle (2pt) node[] {};
       }
       
     \draw[arc arrow=to pos 0.7 with length 2mm, dashed, blue] (2,0) arc (-180:0:2.5);
   \draw[arc arrow=to pos 0.7 with length 2mm, dashed, blue] (-1,0) arc (180:0:2.5);
       \draw[arc arrow=to pos 0.7 with length 2mm, blue] (0,0) arc (180:0:1.5);
         \draw[arc arrow=to pos 0.7 with length 2mm, blue] (0,0) arc (-180:0:0.5);
         \draw[arc arrow=to pos 0.7 with length 2mm, blue] (1,0) arc (180:0:0.5);
           \draw[arc arrow=to pos 0.7 with length 2mm, blue] (3,0) arc (-180:0:1.5);
           \draw[arc arrow=to pos 0.7 with length 2mm, blue] (4,0) arc (-180:0:0.5);
             \draw[arc arrow=to pos 0.7 with length 2mm, blue] (5,0) arc (180:0:0.5);

        		\node  [above] at (3.2,0) {$a$};
         	\node  [above] at (6.2,0) {$c$};
          	\node  [above] at (4.8,0) {$e$};
         	\node  [above] at (0.8,0) {$d$};
                 \node  [above] at (2.2,0) {$f$};
          \node  [above] at (3.8,-0.5) {$g$};
          \node  [above] at (-0.2,0) {$b$};
            \node  [above] at (-0.8,0) {$i$};
            \node  [above] at (6.9,0) {$h$};
              \node  [above] at (1.5,0) {$w$};
          \node  [below] at (4.5,0) {$z$};
          \node  [above] at (5.5,0) {$w$};
           \node  [below] at (0.5,0) {$z$};
         
    \end{tikzpicture}\label{subfig: share_1}
     }
\end{minipage}%
\begin{minipage}{.5\linewidth}
     \centering
      \subfloat[]{
       \begin{tikzpicture}[scale=0.9]
    
      \begin{scope}[thin, black!40!white]  
      \foreach \i in {-1,...,1}
   {
   \draw  (-2, \i) -- (2, \i);
     \draw  (\i, -2) -- (\i, 2);
       }
      \end{scope}
      
      	\filldraw (0.6, 0.6) circle (2pt) node[] (a){};
       \filldraw (0.4, -0.4) circle (2pt) node[] (f){};
              \filldraw (0.6, -0.6) circle (2pt) node[] (c){};
               \filldraw (0.4, -1.4) circle (2pt) node[] (h){};
                     \filldraw (-0.4, 0.4) circle (2pt) node[] (g){};
                          \filldraw (-1.4, 0.4) circle (2pt) node[] (i){};
       \filldraw (-0.6, 0.6) circle (2pt) node[] (b){};
              \filldraw (-0.5, -0.7) circle (2pt) node[] (e){};
              \filldraw (-0.7, -0.5) circle (2pt) node[] (d){};

     \draw  (0.6, 0.6) -- (0.6, -0.6);
      \draw  (0.6, 0.6) -- (0.4, -0.4);
       \draw  (0.6, 0.6) -- (-0.4, 0.4);
          \draw  (0.6, 0.6) -- (-0.6, 0.6);
        \draw  (0.6, -0.6) -- (-0.5, -0.7);
             \draw  (0.4, -0.4) -- (-0.7, -0.5);
                  \draw  (-0.4, 0.4) -- (-0.5, -0.7);
                       \draw  (-0.6, 0.6) -- (-0.7, -0.5);
                        \draw  (0.6, -0.6) -- (0.4, -1.4);
             \draw  (0.4, -0.4) -- (0.4, -1.4);
                  \draw  (-0.4, 0.4) -- (-1.4, 0.4);
                       \draw  (-0.6, 0.6) -- (-1.4, 0.4);

      	 	\node  [right] at (a) {$a$};
         	\node  [right] at (c) {$c$};
          	\node  [above] at (b) {$b$};
           	\node  [left] at (d) {$d$};
            \node  [below] at (e) {$e$};
               \node  [below] at (0.25,0.15) {$f$};
                  \node  [below] at (-0.25,0.45) {$g$};
        \node  [above] at (i) {$i$};
      \node  [right] at (h) {$h$};
        
    \end{tikzpicture} \label{subfig: share_2}
     }
\end{minipage}\\
\begin{minipage}{.5\linewidth}
     \centering
      \subfloat[]{
       \begin{tikzpicture}[scale=0.9]
  \begin{scope}[thin, red!0!white]
	  \draw  (-2, 0) -- (8,0);
      \end{scope}    
       \begin{scope}[thin, black!50!white]
	  \draw  (-1, 0) -- (7,0);
      \end{scope}
    
   \foreach \i in {0,...,6}
   {
      	\filldraw (\i, 0) circle (2pt) node[] {};
       }
        \draw[arc arrow=to pos 0.7 with length 2mm,dashed, blue] (1,0) arc (0:-180:0.5);
     \draw[arc arrow=to pos 0.7 with length 2mm,dashed, blue] (6,0) arc (0:180:0.5);
       \draw[arc arrow=to pos 0.7 with length 2mm, blue] (2,0) arc (180:0:0.5);
         \draw[arc arrow=to pos 0.7 with length 2mm, blue] (3,0) arc (-180:0:0.5);
           \draw[arc arrow=to pos 0.7 with length 2mm, blue] (4,0) arc (0:180:1.5);
           \draw[arc arrow=to pos 0.7 with length 2mm, blue] (5,0) arc (0:-180:1.5);

        		\node  [above] at (3.2,0) {$a$};
         	\node  [above] at (4.2,0) {$c$};
         	\node  [above] at (1.8,0) {$b$};
          \node  [below] at (5.2,0) {$d$};
          \node  [below] at (6,0) {$f$};
          \node  [above] at (0.8,0) {$e$};
           \node  [above] at (0,0) {$g$};
              \node  [above] at (2.5,0) {$w$};
          \node  [below] at (3.5,0) {$z$};
         
    \end{tikzpicture}\label{subfig: share_3}
     }
\end{minipage}%
\begin{minipage}{.5\linewidth}
     \centering
      \subfloat[]{
       \begin{tikzpicture}[scale=0.9]
        \begin{scope}[thin, black!0!white]
      \end{scope}
      \begin{scope}[thin, black!40!white]  
      \foreach \i in {-1,...,1}
   {
   \draw  (-2, \i) -- (2, \i);
     \draw  (\i, -2) -- (\i, 2);
       }
      \end{scope}
      
      	\filldraw (0.6, 0.6) circle (2pt) node[] (a){};
       \filldraw (0.6, -0.6) circle (2pt) node[] (c){};
             \filldraw (0.6, -0.8) circle (2pt) node[] (g){};
       \filldraw (-0.6, 0.6) circle (2pt) node[] (b){};
        \filldraw (-0.8, 0.6) circle (2pt) node[] (f){};
              \filldraw (-0.5, -0.7) circle (2pt) node[] (e){};
              \filldraw (-0.7, -0.5) circle (2pt) node[] (d){};

     \draw  (0.6, 0.6) -- (0.6, -0.6);
          \draw  (0.6, 0.6) -- (-0.6, 0.6);
        \draw  (0.6, -0.6) -- (-0.5, -0.7);
         \draw  (0.6, -0.8) -- (-0.5, -0.7);
             \draw  (0.6, -0.6) -- (-0.7, -0.5);
                  \draw  (-0.6, 0.6) -- (-0.5, -0.7);
                    \draw  (-0.8, 0.6) -- (-0.7, -0.5);
                       \draw  (-0.6, 0.6) -- (-0.7, -0.5);

      	 	\node  [right] at (a) {$a$};
         	\node  [right] at (c) {$c$};
          	\node  [above] at (b) {$b$};
           	\node  [left] at (d) {$d$};
            \node  [left] at (f) {$f$};
            \node  [below] at (e) {$e$};
                    \node  [below] at (g) {$g$};

    \end{tikzpicture} \label{subfig: share_4}
     }
     \end{minipage}\\
\begin{minipage}{.5\linewidth}
     \centering
      \subfloat[]{
       \begin{tikzpicture}[scale=0.9]
       \begin{scope}[thin, red!50!white]
	  \draw  (-2, 0) -- (8,0);
      \end{scope}
    
   \foreach \i in {-1,...,6}
   {
      	\filldraw (\i, 0) circle (2pt) node[] {};
       }
       
            \draw[arc arrow=to pos 0.7 with length 2mm, dashed, blue] (0,0) arc (0:-180:0.5);
     \draw[arc arrow=to pos 0.7 with length 2mm, dashed, blue] (3,0) arc (-180:0:1.5);
   \draw[arc arrow=to pos 0.7 with length 2mm, blue] (0,0) arc (180:0:2.5);
       \draw[arc arrow=to pos 0.7 with length 2mm, blue] (1,0) arc (180:0:1.5);
         \draw[arc arrow=to pos 0.7 with length 2mm, blue] (1,0) arc (-180:0:0.5);
         \draw[arc arrow=to pos 0.7 with length 2mm, blue] (2,0) arc (180:0:0.5);
           \draw[arc arrow=to pos 0.7 with length 2mm, blue] (4,0) arc (-180:0:0.5);
           \draw[arc arrow=to pos 0.7 with length 2mm, dashed, blue] (6,0) arc (180:0:0.5);

        		\node  [above] at (3.2,0) {$f$};
          	\node  [above] at (5.2,0) {$c$};
         	\node  [above] at (0.8,0) {$b$};
                 \node  [above] at (1.8,0) {$d$};
          \node  [above] at (4.2,0) {$a$};
          \node  [above] at (-0.2,0) {$e$};
           \node  [above] at (-1,0) {$g$};
             \node  [above] at (5.8,0) {$h$};
              \node  [above] at (2.5,0) {$w$};
          \node  [below] at (4.5,0) {$z$};
        \node  [below] at (1.5,0) {$z$};

    \end{tikzpicture}\label{subfig: share_5}
     }
\end{minipage}%
\begin{minipage}{.5\linewidth}
     \centering
      \subfloat[]{
       \begin{tikzpicture}[scale=0.9]
        \begin{scope}[thin, black!0!white]
      \end{scope}
      \begin{scope}[thin, black!40!white]  
      \foreach \i in {-1,...,1}
   {
   \draw  (-2, \i) -- (2, \i);
     \draw  (\i, -2) -- (\i, 2);
       }
      \end{scope}
      
      	\filldraw (0.6, 0.6) circle (2pt) node[] (a){};
       \filldraw (0.4, -0.6) circle (2pt) node[] (c){};
              \filldraw (0.8, -0.4) circle (2pt) node[] (f){};
       \filldraw (-0.6, 0.6) circle (2pt) node[] (b){};
       \filldraw (-0.9, 0.4) circle (2pt) node[] (g){};
              \filldraw (-0.8, -0.6) circle (2pt) node[] (e){};
              \filldraw (-0.4, -0.4) circle (2pt) node[] (d){};
               \filldraw (0.6, -1.4) circle (2pt) node[] (h){};

     \draw  (0.6, 0.6) -- (-0.6, 0.6);
 \draw  (0.6, 0.6) -- (0.8, -0.4);
  \draw  (0.6, 0.6) -- (0.4, -0.6);
  
        \draw  (-0.6, 0.6) -- (-0.4, -0.4);
              \draw  (-0.6, 0.6) -- (-0.8, -0.6);
              \draw  (-0.9, 0.4) -- (-0.8, -0.6);
                \draw  (0.8, -0.4) -- (0.6, -1.4);
                   \draw  (0.4, -0.6) -- (0.6, -1.4);
             \draw  (0.8, -0.4) -- (-0.4, -0.4);
                       \draw  (0.4, -0.6) -- (-0.8, -0.6);

      	 	\node  at (0.8,0.82) {$a$};
         	\node  [below left] at (c) {$c$};
          	\node  [above] at (b) {$b$};
           	\node  [above right] at (d) {$d$};
            \node  [below] at (e) {$e$};
           \node  [right] at (f) {$f$};
           \node  [left] at (g) {$g$};
           \node  [right] at (h) {$h$};
         
    \end{tikzpicture} \label{subfig: share_6}
     }
     \end{minipage}
    \caption{The cases when the two distinct arcs share a common end point.}
    \label{fig: share}
\end{figure}
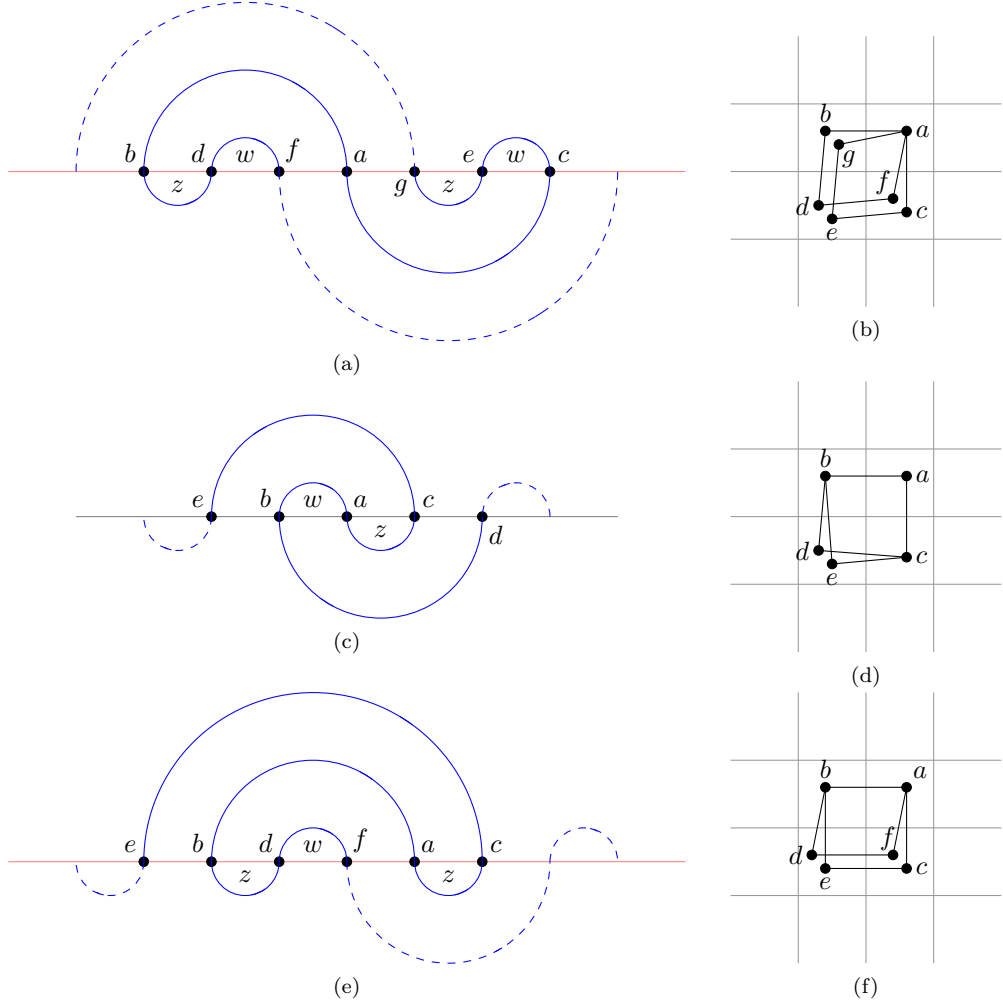
\end{proof}

\begin{figure}
\begin{minipage}{.5\linewidth}
\centering
\subfloat[The standard diagram of $K_j.$ The $i$-th intersection point on the top side is identified with the $(i+2)$-th point on the bottom side.]{
\labellist
 \pinlabel { $w$}  at 62 22
 \pinlabel { $z$}  at 132 160
  \pinlabel { $\cdots$}  at 60 130
   \pinlabel { $4j$}  at 60 145
\endlabellist
\includegraphics[scale=0.85]{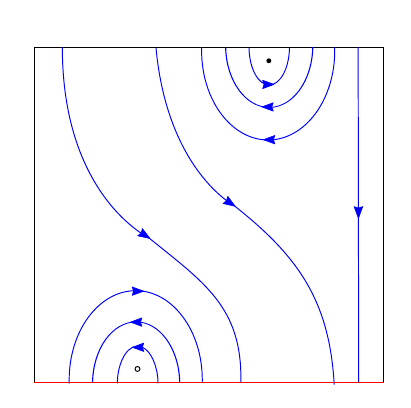}\label{subfig: kj_a}
}
\end{minipage}%
\begin{minipage}{.5\linewidth}
     \centering
      \subfloat[One iteration of the $\beta$ curve in the universal cover.]{
       \begin{tikzpicture}[scale=0.7]
       \begin{scope}[thin, red!50!white]
	  \draw  (-5.5, 0) -- (5.5,0);
      \end{scope}
      \begin{scope}[thin, black!50!white]
	  \draw  (-5.5, 2) -- (5.5,2);
   \draw  (-5.5, 3.5) -- (-2,3.5);
      \draw  (-5.5, -2) -- (5.5,-2);
         \draw  (-5.5, -4) -- (5.5,-4);
         \draw  (2, -5.5) -- (5.5,-5.5);
      \end{scope}
     \begin{scope}[thin, black!0!white]
          \draw  (0, -2) -- (0, 3);
      \end{scope}
      
        \draw[arc arrow=to pos 0.7 with length 2mm, blue] (0,0) arc (180:0:1.5);
        \draw[arc arrow=to pos 0.7 with length 2mm, blue] (0.5,0) arc (180:0:1);
        \draw[arc arrow=to pos 0.7 with length 2mm, blue] (-4.5,4) arc (180:0:0.25);
        \draw[arc arrow=to pos 0.7 with length 2mm, blue] (-3,-2) arc (-180:0:1.5);
        \draw[arc arrow=to pos 0.7 with length 2mm, blue] (-2.5,-2) arc (-180:0:1);
        \draw[arc arrow=to pos 0.7 with length 2mm, blue] (4,-6) arc (-180:0:0.25);
        \draw [blue] (-4,4) -- (-2.5,-2);
         \draw [blue] (-4.5,4) -- (-3,-2);
         \draw [blue] (-0.5,-2) -- (0.5,0);
         \draw [blue] (2.5,0) -- (4,-6);
         \draw [blue] (3,0) -- (4.5,-6);

  \node   at (5,0.4) {$0$};
  \node   at (5,-1.75) {$-1$};
  \node   at (5,-3) {\vdots};
  \node   at (5,-3.8) {\vdots};
  \node   at (5,-5.2) {$-j$};
 \node   at (-1.6,1.75) {$1$};
 \node   at (-1.6,3.55) {$j-1$};
 \node   at (-1.6,2.75) {\vdots};
         
    \end{tikzpicture}\label{subfig: kj_b}
     }
\end{minipage}\\
\begin{minipage}{\linewidth}
     \centering
      \subfloat[The lift to the universal cover, when $j\geq 1$.]{
       \begin{tikzpicture}[scale=0.8]
       \begin{scope}[thin, red!50!white]
	  \draw  (-7.4, 0) -- (7.4,0);
    \draw  (-8.4, 0) -- (-7.6,0);
   \draw  (8.4, 0) -- (7.6,0);
      \end{scope}
     \begin{scope}[thin, black!0!white]
          \draw  (0, -2) -- (0, 3);
      \end{scope}
      
   \foreach \i in {-8,...,8}
   {
      	\filldraw (\i, 0) circle (2pt) node[] {};
       }
        \draw[arc arrow=to pos 0.7 with length 2mm, blue] (-7,0) arc (180:0:0.5);
        \draw[arc arrow=to pos 0.7 with length 2mm, blue] (-6,-0.5) arc (-180:0:0.5);
        \draw[arc arrow=to pos 0.7 with length 2mm, blue] (-5,0) arc (180:0:0.5);
        \draw[arc arrow=to pos 0.7 with length 2mm, blue] (-4,0) arc (-180:0:2.5);
        \draw[arc arrow=to pos 0.7 with length 2mm, blue] (-3,0) arc (-180:0:1.5);
        \draw[arc arrow=to pos 0.7 with length 2mm, blue] (-2,0) arc (-180:0:0.5);
        \draw[arc arrow=to pos 0.7 with length 2mm, blue] (-3,0) arc (180:0:0.5);
        \draw[arc arrow=to pos 0.7 with length 2mm, blue] (-1,0) arc (180:0:2.5);
         \draw[arc arrow=to pos 0.7 with length 2mm, blue] (0,0) arc (180:0:1.5);
        \draw[arc arrow=to pos 0.7 with length 2mm, blue] (1,0) arc (180:0:0.5);
                \draw[arc arrow=to pos 0.7 with length 2mm, blue] (2,0) arc (-180:0:0.5);
        \draw[arc arrow=to pos 0.7 with length 2mm, blue] (4,0) arc (-180:0:0.5);
        \draw[arc arrow=to pos 0.7 with length 2mm, blue] (5,0.5) arc (180:0:0.5);
        \draw[arc arrow=to pos 0.7 with length 2mm, blue] (6,0) arc (-180:0:0.5);

   \node   at (-6,1) {$\overbrace{\ \hspace{4em} }^{j-1 \text{ copies}}$};
  \node   at (6,1.5) {$\overbrace{\ \hspace{4em} }^{j-1 \text{ copies}}$};

        \node   at (-8.2,0.3) {\small$x_{-3-2j}$};
\node   at (-7.5,-0.3) {$\cdots$};
 \node   at (-0.7,-0.3) {\small$x_{-1}$};
   \node   at (-2.3,-0.3) {\small$x_{-2}$};
    \node   at (-3.3,-0.3) {\small$x_{-3}$};
     \node   at (3.3,-0.3) {\small$x_{3}$};
 \node   at (0.2,-0.3) {\small$x_0$};
  \node   at (1,-0.3) {\small$x_1$};
   \node   at (1.8,-0.3) {\small$x_2$};
    \node   at (7.5,0.3) {$\cdots$};
        \node   at (8.45,-0.3) {\small$x_{3+2j}$};
     
           \node   at (-1.5,-2) {\small$z$};
          \node   at (-1.5,-0.2) {\small$z$};
                  \node   at (-6.5,0.2) {\small $w$};
                      \node   at (-4.5,0.2) {\small$w$};
          \node   at (-5.5,-0.2) {\small$z$};
           \node   at (-5.5,-0.6) {\small$z$};
         
           \node   at (1.5,2) {\small$w$};
          \node   at (1.5,0.2) {\small$w$};
                  \node   at (6.5,-0.2) {\small $z$};
                      \node   at (4.5,-0.2) {\small$z$};
          \node   at (5.5,0.2) {\small$w$};
           \node   at (5.5,0.6) {\small$w$};
              \node   at (-2.5,0.2) {\small$w$};
                 \node   at (2.5,-0.2) {\small$z$};
          \draw [blue] (-6,0) -- (-6,-0.5);
          \draw [blue] (-5,0) -- (-5,-0.5);
          \draw [blue] (6,0) -- (6,0.5);
          \draw [blue] (5,0) -- (5,0.5);
  \draw [blue] (-8,0) -- (-8,-2);
  \draw [blue] (8,0) -- (8,2);
        
    \end{tikzpicture}\label{subfig: kj_c}
     }
\end{minipage}%
\caption{Each knot $K_j = K(7+4j,3,4j,2)$ for $j\geq 0$ is strongly almost coherent.}
\label{fig: kj}
\end{figure}

We conclude this paper by proving our result on homology cobordism. {Recall that we have set $K_j$ to be the $(1,1)$ knot in $S^3$ given by the four-tuple $(7+4j,3,4j,2)$ with $j\in \Z_{\geq 0}$.}

\propkj*

The argument we use for claim \eqref{it:kj2} is well-known to experts in involutive knot Floer homology, and we write it down here for the sake of completeness.
\begin{proof}[Proof of Proposition \ref{prop: kj}]
    The lift to the universal cover is obtained by concatenating $2j$ iterations of the portion displayed in Figure \ref{fig: kj}\subref{subfig: kj_b} and then pulling tight the curve. Choose a basis $x_i$ with $|i| \leq 3+2j$ as shown in Figure  \ref{fig: kj}\subref{subfig: kj_c}, where $x_0$ is the middle intersection point. When $j \geq 1,$ the {non-trivial components of the differential are} given by
    \begin{align*}
        \partial x_{\pm 2} &= x_{\pm 3} + x_{\pm 1} \\
        \partial x_{\pm 3} &= \partial x_{\pm 1} = x_0 \\
        \partial x_{\pm 4} &= x_{\pm 5} + x_{\pm 3} + x_{\pm 1}\\
        \partial x_{\pm 2i} &= x_{\pm 2i+1} + x_{\pm 2i-1}  \qquad \text{for} \quad 3\leq i \leq j+1.
    \end{align*}
    After a filtered change of basis 
    \begin{align*}
        y_0 &= x_2 + x_{-2}\\
        y_{1} &= x_{3} + x_{-1} \\
        y_{-1} &= x_{-3} + x_{1} \\
        y_{\pm i} &= x_{\pm (i+2)} \qquad \text{for} \quad 2\leq i \leq 2j,
    \end{align*}
    we obtain a direct summand of a positive staircase $D_j$ generated by $\{y_i  | -2j-1 \leq i \leq 2j+1\}$ and a box generated by $\{x_0,x_1,x_2,x_3\}.$
    
    Quotienting out the box complex induces a local equivalence. The remaining staircase $D_j$  has $3+4j$ generators, with the top half of the staircase {(i.e. the subcomplex generated by $\{y_j:j\leq 0\}$)} \change{having total horizontal length} $n(D_j)=j+1$. Therefore we can apply \cite[Theorem 3.1]{hugo_concordance} (See also \cite[Remark 3.3]{hugo_concordance}) {to determine the filtered chain homotopy 
 type of $\CFKi(S^3_{+1}(K_j),\mu)$ where $\mu$ is the image of a meridian of  $K$ in the $+1$-surgery on $K_j$, $S^3_{+1}(K_j)$}. In particular, up to filtered chain homotopy, $\CFKi(S^3_{+1}(K_j),\mu)$ splits as the direct sum of an acylic piece and a summand \change{$C_{j+1}$}, as defined in ~\cite[Theorem 3.1]{hugo_concordance}.  {By \cite[Theorem 7.1]{OSknot}, the knot Floer complex of the connected sum is given by the tensor product.  Since $\CFKi(-S^3_{+1}(K_j),U)$ is generated by a single generator, tensoring with it amounts to a constant grading shift, which preserves the local equivalence class.  Consider the homomorphism $\bigoplus_{j>0}\varphi_{j+1,j}\colon \CZhat/\CZ \rightarrow \bigoplus_{j>0}\Z$  from~\cite[Theorem 1.1]{dai2021homology}. Since $\CFKi(S^3_{+1}(K_j)\conn -S^3_{+1}(K_j),\mu \conn U)$ is \change{locally} equivalent to a \change{$C_{j+1}$} summand, \change{which is a standard complex in the sense of~\cite[Definition 5.1]{dai2021homology}, it immediately follows from~\cite[Definition 8.1]{dai2021homology} that  $\varphi_{j+1,j}(S^3_{+1}(K_j),\mu)= -1$ for $j>0$  and $\varphi_{i+1,i}(S^3_{+1}(K_j),\mu)=0$ for $i\neq j$. This last step is essentially that carried out in the proof of~\cite[Proposition 11.2, Lemma 11.3]{dai2021homology}}.} This proves claim \eqref{it:kj1}.
    
    We prove claim \eqref{it:kj2} by considering the Hendricks-Manolescu's involution $\iota$ on $\CFKi(K_j).$ Since $\iota$ is a skew-filtered, grading-preserving chain map that squares to the Sarkar{'s \change{basepoint} pushing} map{~\cite{Sarkarmovingbasepoint}}, up to filtered homotopy {equivalence}  one may take $\iota$ to send $y_0 \mapsto y_0 + U^{-1}x_0$ and $x_2 \mapsto x_2 + y_0{,}$ and to be reflection about the line $i=j$ for every other element. (This is essentially the same computation as those given in the proof of~\cite[Proposition 8.1]{involutive}.)
    
Assume to the contradiction that $K_j$ is concordant to an L-space knot. Then as an $\iota_K$-complex (\cite[Definition 2.2]{connectedian}),
 $(\CFKi(K_j),\iota)$ is locally equivalent {(\cite[Definition 2.4]{connectedian})} to   $(D,\iota'),$  where $D$ is a staircase and $\iota'$ is the canonical involution. In fact, $D$ must be isomorphic to $D_j$, the staircase complex generated by all $y_i$. Let $f\colon  (\CFKi(K_j),\iota) \longrightarrow (D_j,\iota')$ be an $\iota_K$-local {equivalence}. As a graded, filtered map $f$ sends each $y_i$ to itself and either $x_2 \mapsto 0$ or $x_2 \mapsto y_0.$ In either case, {from the definition of local equivalence, we have that $f$ commutes up to homotopy with the involutions i.e.}
 \[
      H \partial (x_2) + \partial H(x_2) = f\iota(x_2) + \iota'f (x_2) = f(x_2 + y_0) + \iota'f(x_2) = y_0
 \]
 for some map $H$. It follows that $H(x_1 + x_3) + y_0 \in \operatorname{im}\partial.$ On the other hand, we have $f(x_1 + x_3) =  f\partial(x_2) = \partial f(x_2)= 0$ or $y_1 + y_{-1}.$  In either case, we compute
 \[
     0=   f \iota (x_1 + x_3) + \iota'f(x_1 + x_3) = H\partial (x_1 + x_3) + \partial H(x_1 + x_3) = y_1 + y_{-1},
 \]
    a contradiction. It follows that $K_j$ is not concordant to any L-space knot.
\end{proof}
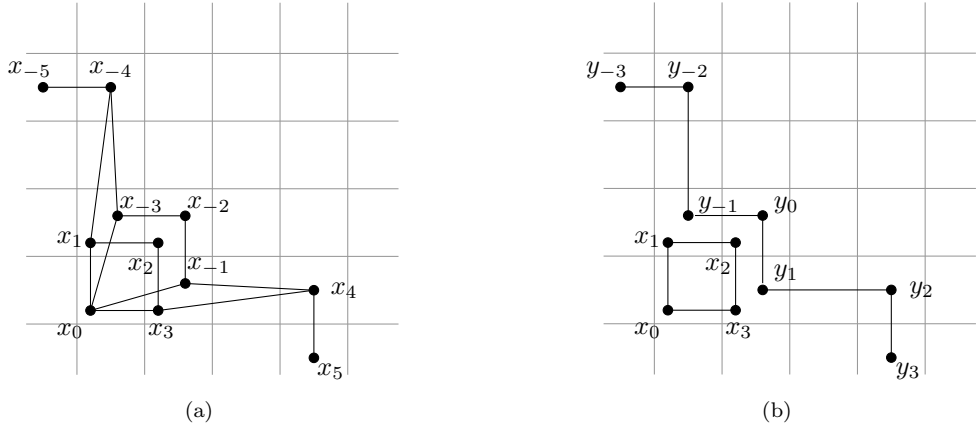
\begin{figure}
\begin{minipage}{.5\linewidth}
     \centering
   \subfloat[]{
    \centering
       \begin{tikzpicture}[scale=0.9]
      \begin{scope}[thin, black!40!white]  
      \foreach \i in {0,...,4}
   {
   \draw  (-0.75, \i) -- (4.75, \i);
     \draw  (\i, -0.75) -- (\i, 4.75);
       }
      \end{scope}
      \filldraw (0.2, 0.2) circle (2pt) node[] (){};
       \filldraw (0.2, 1.2) circle (2pt) node[] (){};
       \filldraw (1.2, 0.2) circle (2pt) node[] (){};
       \filldraw (1.2, 1.2) circle (2pt) node[] (){};

        \filldraw (0.6, 1.6) circle (2pt) node[] (){};
         \filldraw (1.6, 0.6) circle (2pt) node[] (){};
          \filldraw (1.6, 1.6) circle (2pt) node[] (){};
           \filldraw (0.5, 3.5) circle (2pt) node[] (){};
              \filldraw (3.5, 0.5) circle (2pt) node[] (){};
                 \filldraw (-0.5, 3.5) circle (2pt) node[] (){};
                    \filldraw (3.5, -0.5) circle (2pt) node[] (){};

     \draw  (3.5, 0.5) -- (3.5, -0.5);
        \draw  (0.5, 3.5) -- (-0.5, 3.5);
        
         \draw  (3.5, 0.5) -- (1.6, 0.6);
         \draw  (3.5, 0.5) -- (1.2, 0.2);
                \draw  (0.5, 3.5) -- (0.6, 1.6);
         \draw  (0.5, 3.5) -- (0.2, 1.2);
          \draw  (0.2, 0.2) -- (1.6, 0.6);
         \draw  (0.2, 0.2) -- (1.2, 0.2);
                \draw  (0.2, 0.2) -- (0.6, 1.6);
         \draw  (0.2, 0.2) -- (0.2, 1.2);
        \draw  (1.6, 1.6) -- (1.6, 0.6);
         \draw  (1.2, 1.2) -- (1.2, 0.2);
                \draw  (1.6, 1.6) -- (0.6, 1.6);
         \draw  (1.2, 1.2) -- (0.2, 1.2);

         \node at (-0.1,-0.1) {$x_0$};
                  \node at (1.25,-0.1) {$x_3$};
                  \node at (-0.1,1.25) {$x_1$};
                        \node at (-0.7,3.75) {$x_{-5}$};
                        \node at (0.5,3.75) {$x_{-4}$};
                        \node at (0.95,1.75) {$x_{-3}$};
                         \node at (1.95,1.75) {$x_{-2}$};
                          \node at (0.95,0.85) {$x_{2}$};
                          \node at (3.75,-0.7) {$x_{5}$};
                        \node at (3.95,0.5) {$x_{4}$};
                        \node at (1.95,0.85) {$x_{-1}$};
    \end{tikzpicture} 
    }
    \end{minipage}%
    \begin{minipage}{.5\linewidth}
     \centering
   \subfloat[]{
    \centering
       \begin{tikzpicture}[scale=0.9]
      \begin{scope}[thin, black!40!white]  
      \foreach \i in {0,...,4}
   {
   \draw  (-0.75, \i) -- (4.75, \i);
     \draw  (\i, -0.75) -- (\i, 4.75);
       }
      \end{scope}
      \filldraw (0.2, 0.2) circle (2pt) node[] (){};
       \filldraw (0.2, 1.2) circle (2pt) node[] (){};
       \filldraw (1.2, 0.2) circle (2pt) node[] (){};
       \filldraw (1.2, 1.2) circle (2pt) node[] (){};

        \filldraw (0.5, 1.6) circle (2pt) node[] (){};
         \filldraw (1.6, 0.5) circle (2pt) node[] (){};
          \filldraw (1.6, 1.6) circle (2pt) node[] (){};
           \filldraw (0.5, 3.5) circle (2pt) node[] (){};
              \filldraw (3.5, 0.5) circle (2pt) node[] (){};
                 \filldraw (-0.5, 3.5) circle (2pt) node[] (){};
                    \filldraw (3.5, -0.5) circle (2pt) node[] (){};

     \draw  (3.5, 0.5) -- (3.5, -0.5);
        \draw  (0.5, 3.5) -- (-0.5, 3.5);
        
         \draw  (3.5, 0.5) -- (1.6, 0.5);

                \draw  (0.5, 3.5) -- (0.5, 1.6);
          
         \draw  (0.2, 0.2) -- (1.2, 0.2);
              
         \draw  (0.2, 0.2) -- (0.2, 1.2);
        \draw  (1.6, 1.6) -- (1.6, 0.6);
         \draw  (1.2, 1.2) -- (1.2, 0.2);
                \draw  (1.6, 1.6) -- (0.6, 1.6);
         \draw  (1.2, 1.2) -- (0.2, 1.2);

         \node at (-0.1,-0.1) {$x_0$};
                  \node at (1.25,-0.1) {$x_3$};
                  \node at (-0.1,1.25) {$x_1$};
                        \node at (-0.7,3.75) {$y_{-3}$};
                        \node at (0.5,3.75) {$y_{-2}$};
                        \node at (0.95,1.75) {$y_{-1}$};
                         \node at (1.95,1.75) {$y_{0}$};
                          \node at (0.95,0.85) {$x_{2}$};
                          \node at (3.75,-0.7) {$y_{3}$};
                        \node at (3.95,0.5) {$y_{2}$};
                        \node at (1.95,0.75) {$y_{1}$};
    \end{tikzpicture} 
    }
    \end{minipage}
    \caption{The knot Floer complex of $K_1,$ before and after change of basis.} 
    \label{fig: kjcfk}
\end{figure}

\section{Addendum} \label{sec: addendum}
In this section we provide the code used to obtain the list of four-tuples encoding $(1,1)$ almost L-space knots listed in Table \ref{ta:list}. One can write code that checks the strongly almost coherent condition quite easily, but in practice, we find an algorithm that counts inconsistent arcs in the universal cover more illuminating, and potentially more useful in more general settings. Therefore we provide an algorithm which achieves the latter.   

Given a standard $(1,1)$ diagram associated to $(p,q,r,s)$,   to each intersection point $x$ in the rectangle in Figure \ref{fig: standard} we can assign a three-tuple $[\pos,\dir,\sid]$, such that each intersection point of $\alpha \cap \beta$ in $S^1\times S^1$ has a one-to-two mapping to the three-tuples. 
Here $\pos$ takes integer value,  recording position data in the universal cover, such that $(\pos \operatorname{mod} p)$ specifies the position of $x$ on a given side of the rectangle; $\dir$ takes value in $\pm1$, recording the direction in which the $\beta$ curve intersects $\alpha$ at $x$, where $1$ indicates upwards and $-1$ indicates downwards; $\sid$ takes value in $\pm1$, where $1$ indicates the top side of the rectangle and $-1$ the bottom side.

 When viewing intersection points in the universal cover $\R^2$ (where $S^1\times S^1$ gives a $\Z \times\Z$ lattice), we can assign one more parameter:  $\hei$, which takes integer value,  indicating that (a given lift of) the intersection point $x$ lives in the line $\{y= \hei \}$. 

 The strategy of the algorithm is: we follow a lift $\wb$ of $\beta$ in the universal cover and record the four-tuples $[\pos,\dir,\sid, \hei]$ of intersection points in order. This encodes all the information of the lifted curves. Since $\wb$ is cut by $\wa$ into arcs and two rays, we then simply count the number of arcs of each direction in upper half plane.

 In particular, Algorithm \ref{alg:iseq} gives a function that produces the next four-tuple in the sequence. Using this function, we first obtain a sequence that encodes a portion of the curve $\wb$ in $\R^2$ that corresponds to one iteration of the curve $\beta$ downstairs. In order to include all intersection points on the chosen lift $\wa$, i.e. the line $\{y=0\}$, we have to decide  how many iterations one needs to go through. This is done in the first half of Algorithm \ref{alg:count}.\newline
\begin{algorithm}[H]
\DontPrintSemicolon
\KwData{$(p,q,r,s)$}
\KwResult{A function that with input of the current four-tuple $[\pos,\dir,\sid, \hei]$, outputs the next four-tuple in the sequence. This is used to determine a path of intersection points that $\wb$ travels through in order.}
\SetKwFunction{FMain}{FindNext}
 \SetKwProg{Def}{def}{:}{}
\Def{\FMain{$[\pos,\dir,\sid, \hei]$}}{  \tcp*{Define a function FindNext. Input: current four-tuple;  Output: next four-tuple.} 
\If{$\sid = 1$ and $\dir = 1$}{
\Return $[\pos + s,\dir,-1, \hei]$\;
}
\If {$\sid = -1$ and $\dir = -1$} {
\Return $[\pos - s,\dir,1, \hei]$ \tcp*{Identify two sides of the rectangle}} \;   
\eIf{$\sid = 1$ and $\dir = -1$  \tcp*{Suppose starting from the top side of the rectangle}}     
{\If {$\pos \%p \leq r-1 $ \tcp*{For the first $r$ strands}}     
{\Return{$[\pos + 2*q,\dir,-1, \hei -1 ]$} \;}
\eIf {$r \leq \pos \%p \leq r+ 2*q -1 $ \tcp*{For the rainbow arcs}}     
{\Return{$[2*r + 2*q -1 - 2*\pos \%p +  \pos,1,1, \hei  ]$} \;}
{\Return $[\pos,\dir,-1, \hei -1 ]$  \tcp*{For the remaining strands}} \; 
}
{\tcp*{Suppose starting from the bottom side of the rectangle}
\If {$\pos \%p \leq 2*q-1 $ \tcp*{For the rainbow arcs}}     
{\Return{$[2*q -1 - 2*\pos \%p +  \pos,-1,-1, \hei ]$} \;}
\eIf {$2*q \leq \pos \%p \leq r+ 2*q -1 $ \tcp*{For the next $r$ strands}}     
{\Return{$[\pos - 2*q,\dir,1, \hei +1 ]$} \;}
{\Return $[\pos,\dir,1, \hei +1 ]$  \tcp*{For the remaining strands}} \; 
}
}
\caption{}
\label{alg:iseq}
\end{algorithm}
\begin{algorithm}
\DontPrintSemicolon
\KwData{Continue from Algorithm \ref{alg:iseq}.}
\KwResult{Number of inconsistent arcs in the upper half plane of the universal cover.}
\SetKwFunction{FMain}{FindNext}
\Begin{
Set $\iseq = [[0,1,1,0]]$ \tcp*{Let the starting point be $[0,1,1,0]$} \;
\While{$[\pos,\sid] \neq [0,1]$ \tcp*{Until $\beta$ returns to the original position downstairs} }{ 
$\iseq.\operatorname{\textit{append}}($\FMain{$[\pos,\dir,\sid, \hei]$})\tcp*{Keep adding the next four-tuple to the string}\;
}
Set $\maxhei = \operatorname{max}\{ \iseq.\hei\}$ \tcp*{The maximal height}\;    
Set $\minhei = \operatorname{min}\{ \iseq.\hei\}$ \tcp*{The minimal height}\;
Set $\dh =  \iseq[\length(\iseq)-1].\hei -  \iseq[0].\hei$ \tcp*{The  height difference from the beginning to ending position}\;
\tcc{ Depending on the sign of $\dh$, we will decide number of iterations needed to exhaust all the intersection points on a chosen lift of $\alpha$.}
Set $\fseq = \iseq$ \;
\eIf{$\dh < 0$}
{$\ite = \ceil[big]{\frac{\minhei-\maxhei}{\dh}}$ \;}
 {\eIf{$\dh > 0$}
{$\ite =  \ceil[big]{\frac{\maxhei-\minhei}{\dh}}$ \; }
{$\ite = 0$ \;}
}
\While {$\length (\fseq) \leq \left( \ite  +1 \right) * (\length(\iseq) -1) $ }
{$\fseq.\operatorname{\textit{append}}($\FMain{$[\pos,\dir,\sid, \hei]$}) \;}
\tcc{ For simplicity, next let us only consider when $\dh \geq 0$. The other case is similar.}
For each element of $\fseq,$ set $ \hei = \hei - \maxhei$\; \tcp*{Adjust height such that $\{y=0\}$ is in the middle. We now have the entire portion of $\wb$ that includes all intersection points of $\wb \cap \wa$.} 
Set $[\po,\ne] = [0,0]$ \;
Create a sub-sequence of $\fseq$ with $[\sid,\hei] = [-1,0]$; call it $\inters$.\; 
\tcc{ Recall that $\wa$ cuts $\wb$ into arcs and two rays. We only consider the upper half plane. Note that the condition $\dh \geq 0$ guarantees that the ray in the upper half plane meets $\{y=0\}$ at the last intersection point in the subsequent $\inters.$ }
For {$0\leq i\leq \floor{\frac{\length(\inters)}{2}}$ \newline }  
{\eIf{$\inters[2i].\pos < \inters[2i+1].\pos$  \tcp*{The $(2i+1)$-th and the $(2i+2)$-th points  in the sequence $\inters$ are connected by an arc. }}
{$\po = \po + 1 $\;}
{$\ne = \ne +1 $\;}}
\Return $\operatorname{min}\{\po,\ne \}$ \;
}
\caption{}
\label{alg:count}
\end{algorithm}

\bibliographystyle{amsalpha}
\bibliography{bibliography}
\end{document}